\def\thm@space@setup{\thm@preskip=5pt
\thm@postskip=5pt}
\newtheorem{theorem}{Theorem}[section]
\newtheorem{lemma}[theorem]{Lemma}
\newtheorem*{claim*}{Claim}
\newtheorem{proposition}[theorem]{Proposition}
\newtheorem{conjecture}[theorem]{Conjecture}
\newtheorem{remark}[theorem]{Remark}
\theoremstyle{definition}
\newtheorem*{definition*}{Definition}
\newcommand{\eps}{\varepsilon}
\title{Cyclic subsets in regular Dirac graphs}
\author
{
Nemanja Dragani\'c
}
\author
{
Peter Keevash
}
\author
{
Alp Müyesser 
}
\address[Dragani\'c]{Mathematical Institute, University of Oxford, UK. 
Supported by SNSF project 217926.}
\email{\tt nemanja.draganic@maths.ox.ac.uk}
\address[Keevash]{Mathematical Institute, University of Oxford, UK. 
Supported by ERC Advanced Grant 883810.}
\email{\tt keevash@maths.ox.ac.uk}
\address[Müyesser]{New College, University of Oxford, UK.}
\email{\tt alp.muyesser@new.ox.ac.uk}
\date{}
\newcommand{\cyc}{\operatorname{Cyc}}
\newcommand{\Gh}{G[\tfrac12]}
\newcommand{\mc}[1]{\mathcal{#1}}
\newcommand{\mb}[1]{\mathbb{#1}}
\newcommand{\nib}[1]{\noindent {\bf #1}}
\newcommand{\nim}[1]{\noindent {\em #1}}
\newcommand{\bcl}[1]{\left\lceil #1 \right\rceil}
\newcommand{\sub}{\subseteq}
\newcommand{\sm}{\setminus}
\newcommand{\ov}{\overline}
\newcommand{\wt}{\widetilde}
\newcommand{\aA}{\alpha}
\newcommand{\bB}{\beta}
\newcommand{\gG}{\gamma}
\newcommand{\dD}{\delta}
\newcommand{\lL}{\lambda}
\newcommand{\tT}{\theta}
\newcommand{\OO}{\Omega}
\newcommand{\DD}{\Delta}
\begin{document}

\begin{abstract}
In 1996, in his last paper, Erd\H{o}s asked the following question that he formulated together with Faudree: 
is there a positive $c$ such that any $(n+1)$-regular graph $G$ on $2n$ vertices 
contains at least $c 2^{2n}$ distinct vertex-subsets $S$ that are cyclic,
meaning that there is a cycle in $G$ using precisely the vertices in $S$.
We answer this question in the affirmative in a strong form by proving 
the following exact result: if $n$ is sufficiently large and $G$ minimises the number of cyclic subsets
then $G$ is obtained from the complete bipartite graph $K_{n-1,n+1}$
by adding a $2$-factor (a spanning collection of vertex-disjoint cycles) within the part of size $n+1$.
In particular, for $n$ large, this implies that the optimal $c$ in the problem is precisely $1/2$. 
 \end{abstract}

\maketitle

\section{Introduction}

An important recent theme in graph theory has been the study of when certain classical theorems
hold in a `robust' or `resilient' way, according to various possible interpretations of these terms.
While many classical results can be interpreted as part of this direction of research,
it was first highlighted as a topic for systematic study by Sudakov and Vu \cite{sudakov2008local}.
The survey by Sudakov  \cite{sudakov2017robustness} discusses many types of robustness,
primarily illustrated by the Hamiltonicity problem, which will also be our focus in this paper.
Here the fundamental theorem is that of Dirac \cite{dirac1952}, stating that any  graph $G$ on $n$ vertices
with minimum degree $\dD(G) \ge n/2$ (commonly referred to as a \emph{Dirac graph}) 
contains a cycle that is Hamiltonian (meaning that it uses all vertices of $G$). 

There are many further results showing that Dirac graphs are Hamiltonian in a deeper and more robust sense,
in strong contrast to graphs of lower minimum degree, which may not even be connected.
These stronger theorems include showing that Dirac graphs have many Hamiltonian cycles \cite{SSS2003, cuckler2009hamiltonian}
and remain Hamiltonian even after passing to a $p$-random subgraph with $p \ge \OO(\tfrac{\log n}{n})$, see  \cite{krivelevich2014robust}.
Further results on resilience can be found in \cite{montgomery2019hamiltonicity, ben2011resilience}
and on pancyclicity in \cite{draganic2024pancyclicity, letzter2023pancyclicity, krivelevich2010resilient, bondy1971pancyclic}.
Another type of robustness considers decomposing $G$ into Hamiltonian cycles  \cite{kuhn2013hamilton, csaba2016proof} 
or covering $G$ by Hamiltonian cycles \cite{ferber20181, draganic2025optimal, krivelevich2012optimal, knox2015edge, hefetz2014optimal}.

Here we consider another natural measure of robustness: are many induced subgraphs Hamiltonian?
This problem was posed by Erd\H{o}s and Faudree \cite{erdHos1999selection} (see also \cite[Problem 622]{erdosproblems}),
who asked whether in any regular Dirac graph, a constant fraction of vertex subsets are \emph{cyclic}, 
meaning that they induce a Hamiltonian subgraph. We let $\cyc(G)$ count cyclic subsets in $G$
and formulate the statement as a conjecture (they ask `Is it true?). 

\begin{conjecture} \label{conj:EF}
Any  $(n+1)$-regular graph $G$ on $2n$ vertices 
has $\cyc(G) \ge c 2^{2n}$ for some absolute $c>0$.
\end{conjecture}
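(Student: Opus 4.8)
We sketch a proof of Conjecture~\ref{conj:EF}; the argument in fact gives $c=\tfrac12-o(1)$, which by the graph $G_0=K_{n-1,n+1}+F$ from the abstract is optimal.

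\textbf{Reformulation, and where $\tfrac12$ appears.} Choosing $S$ uniformly is the same as placing each vertex into $S$ independently with probability $\tfrac12$, so the goal is $\Pr[\,G[S]\text{ is Hamiltonian}\,]\ge c$. The point to appreciate is that this is a borderline situation: for $v\in S$ one has $\mathbb{E}[\deg_{G[S]}(v)\mid v\in S]=\tfrac{n+1}2$ while $\mathbb{E}[\tfrac12|S|]=\tfrac n2$, so degrees of $G[S]$ beat $\tfrac12|S|$ only by an additive constant in expectation and fluctuate on the scale $\sqrt n$; hence no off-the-shelf Hamiltonicity criterion (Dirac, Ore, Chv\'atal, robust expansion, \dots) applies to $G[S]$ as a black box. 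The same bookkeeping for $G_0$ (with parts $A\cup B$, $|A|=n-1$, $F$ a $2$-factor inside $B$) isolates the constant: since $A$ is independent, any Hamiltonian cycle of $G_0[S]$ with $|S\cap A|\ge2$ must separate the vertices of $S\cap A$ by vertices of $S\cap B$, so $|S\cap B|\ge|S\cap A|$ is necessary, and (as explained below) whp also sufficient; since $|B|-|A|=2$ while $|S\cap B|-|S\cap A|$ has standard deviation $\Theta(\sqrt n)$, we get $\Pr[|S\cap B|\ge|S\cap A|]\to\tfrac12$, so $\cyc(G_0)=(\tfrac12+o(1))2^{2n}$. I would then split into two cases according to whether $G$ is close, in edit distance, to a complete bipartite graph $K_{m,2n-m}$ with a spanning almost‑regular graph added inside one or both parts (the \emph{near‑bipartite} case), or not.

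\textbf{Near‑bipartite case.} A quantitative form of the remark that an independent set of size $n-1$ in an $(n+1)$-regular graph on $2n$ vertices forces the graph to equal $G_0$ should give a partition $V(G)=A\cup B$ with $A$ almost independent, $|A|<|B|$ (this ordering forced by $(n+1)$-regularity), the $A$--$B$ bipartite graph almost complete, and $G[B]$ almost regular of some even degree. Given $S$ with $|S\cap B|\ge|S\cap A|$ and $|S\cap A|\ge2$, I would build a Hamiltonian cycle of $G[S]$ by (i)~covering $S\cap B$ by exactly $|S\cap A|$ vertex‑disjoint paths inside $G[S\cap B]$, and (ii)~linking those paths cyclically through the vertices of $S\cap A$. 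Step~(ii) is routine since $A$-vertices have almost complete degree into $B$, so the bipartite graph between $S\cap A$ and the path‑endpoints is almost complete and admits the linking whp. For step~(i), the least number of paths needed to cover a uniformly random subset of an almost regular graph on $B$ is strongly concentrated and whp strictly below $|S\cap A|\approx\tfrac12|A|$ (in the $2$-regular case it is $\approx\tfrac14|B|$, the number of runs of $S\cap B$ around the cycles of $F$; short cycles worsen this only by a bounded factor, harmless as $|B|\sim|A|$). Since this construction also absorbs surplus $A$-vertices when $G[A]$ carries enough edges, it gives $\cyc(G)=(1-o(1))2^{2n}$ \emph{unless} the internal structure is essentially confined to the larger part $B$, in which case $(n+1)$-regularity forces $|A|=n-1$, hence $G\approx G_0$ and $\cyc(G)=(\tfrac12+o(1))2^{2n}$; either way $\cyc(G)\ge(\tfrac12-o(1))2^{2n}$.

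\textbf{Far‑from‑bipartite case.} Here I would aim for the stronger $\cyc(G)=(1-o(1))2^{2n}$. By standard stability results for dense Dirac graphs, an $(n+1)$-regular graph far from every near‑bipartite graph above must either (a)~have a balanced sparse cut, or (b)~be a robust expander with parameters bounded away from the degenerate values. In case~(a), $G[S]$ is whp the disjoint union of two dense pieces joined by $\Theta(n)$ surviving crossing edges, and an explicit two‑path construction makes it Hamiltonian whp. In case~(b) I would run rotation/extension directly on $G[S]$: whp $G[S]$ has minimum degree $\tfrac12|S|-O(\sqrt n\log n)$, no small separator, and enough residual expansion inherited from $G$ to rotate a longest path, close it into a Hamiltonian cycle, and absorb the few vertices of below‑average degree. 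The crux, and the step I expect to be hardest, is case~(b): because $\dD(G)=n+1$ only barely exceeds $\tfrac12|V(G)|$, robust expansion of $G$ does \emph{not} pass to $G[S]$ for free — a random half of a robustly expanding set need not robustly expand — so one cannot simply quote the K\"uhn--Osthus theorem for $G[S]$; the plan is to convert ``far from near‑bipartite'' into expansion with a genuine surplus and carry out the rotation/extension by hand, controlling the polylogarithmically many exceptional positions created by the $O(\sqrt n\log n)$ degree deficit of $G[S]$, uniformly over the $\Theta(\sqrt n)$-window of values of $|S|$ that carries constant probability mass.
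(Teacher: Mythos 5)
Your high-level decomposition matches the paper's: the Krivelevich--Lee--Sudakov classification of Dirac graphs (\Cref{lem:casesdirac}) splits into bi-dense, almost two cliques, and almost bipartite, which correspond respectively to your ``robust expander,'' ``balanced sparse cut,'' and ``near-bipartite'' cases. Your analysis of $G_0$ and where the constant $\tfrac12$ comes from is also correct, and you have rightly identified that the near-bipartite case is the one that determines the constant while the other two should give probability $1-o(1)$.

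The serious gap is in your case (b), the bi-dense/robust-expander case, which you yourself flag as the crux. You propose to run rotation-extension directly on $G[S]$, but you give no mechanism to overcome the degree deficit $\delta(G[S]) = |S|/2 - \Theta(\sqrt{n\log n})$, and it is not clear that ``convert far-from-near-bipartite into expansion with a genuine surplus'' can be made to work: the surplus in $G$ is only $+1$, so whatever expansion $G$ has does not straightforwardly produce the Pósa-type expansion that a hands-on rotation argument needs in $G[S]$. The paper sidesteps this entirely by invoking the Komlós--Sárközy--Szemerédi stability theorem for Dirac's theorem (\Cref{thm:diracstability}): a graph on $m$ vertices with $\delta \geq (\tfrac12-\eps)m$ is Hamiltonian unless it has a near-independent half-set or a near-empty balanced cut, and both of those are ruled out in $\Gh$ by transferring the bi-density of $G$ to $\Gh$ via the Frieze--Kannan regularity lemma (\Cref{lem:FK}). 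This gives the bi-dense case with essentially no new combinatorics. Without some replacement for that stability theorem, your case (b) is a genuine hole, not a routine step.

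There is also a smaller gap in your near-bipartite case: you reason as if $G[B]$ is an almost-regular graph like a $2$-factor (the ``runs around the cycles of $F$'' argument), but the near-bipartite case of the classification only guarantees that the larger part has maximum internal degree $O(\gamma n)$ when it is strictly larger than $n$, and places no such constraint when the parts are balanced; the internal graph on the smaller part is essentially unconstrained beyond an $O(\eps n^2)$ edge bound. The paper's key structural input here is \Cref{lem:balancedcutbigmatching}: for any balanced cut $(A,B)$ of an $(n+1)$-regular graph on $2n$ vertices, the minimum vertex covers of $G[A]$ and $G[B]$ satisfy $(|A'|+1)(|B'|+1) \geq n+1$, which uses regularity in an essential way and guarantees a matching of size $\Omega(\sqrt n)$ on at least one side, hence after passing to $S$ a linear forest long enough to correct the typical $\Theta(\sqrt n)$ imbalance between $|S\cap A|$ and $|S\cap B|$. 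Your sketch does not isolate this fact, and I do not see how your path-cover argument survives without it when the internal structure on either part is badly irregular. Your other points — the reformulation, the role of the $2$-factor in $G_0$, and the need to move between $(1-o(1))$ and $(\tfrac12-o(1))$ depending on whether the structure is ``essentially confined to the larger part'' — are all in line with the paper.
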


We make the following remarks on this conjecture:
\begin{enumerate}
\item One cannot replace $n+1$ by $n$, as if $G=K_{n,n}$ is a balanced complete bipartite graph
then almost all subsets of $V(G)$ induce an unbalanced complete bipartite graph, which is clearly not Hamiltonian.
\item The regularity assumption cannot be weakened to minimum degree $n+1$, as can be seen by considering $K_{n,n}$
with two spanning stars added within each part.
\item As Erd\H{o}s notes in \cite{erdHos1999selection}, one cannot take $\eps>1/2$,
due to the examples $\mc{G}_n$ defined below.
\end{enumerate}

Let $\mc{G}_n$ be the set of $(n+1)$-regular graphs $G$ on $2n$ vertices obtained from the complete bipartite graph $K_{n-1,n+1}$
by adding a $2$-factor (a spanning collection of vertex-disjoint cycles) within the part of size $n+1$.

Our main result is that for large $n$ the extremal examples for \Cref{conj:EF} belong to $\mc{G}_n$.

\begin{theorem}\label{thm:main+}
For any $(n+1)$-regular graph $G$ on $2n$ vertices with $n$ sufficiently large
we have $\cyc(G) \ge \min_{G' \in \mc{G}_n} \cyc(G')$.
\end{theorem}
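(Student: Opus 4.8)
The plan is to reduce to a rigid near-extremal situation and then perform an exact count. Observe first that, among $(n+1)$-regular graphs on $2n$ vertices, $\mc{G}_n$ is exactly the family with $\alpha(G)=n-1$: every vertex of an independent set $I$ has all $n+1$ of its neighbours outside $I$, so $\alpha(G)\le n-1$, and if $A$ is independent with $|A|=n-1$ then every vertex of $B:=V(G)\setminus A$ must be adjacent to all of $A$ (else $A$ extends), hence has $B$-degree exactly $2$, so $G=K_{n-1,n+1}+F$ with $F$ a $2$-factor. As the inequality is trivial for $G\in\mc{G}_n$, it suffices to show $\cyc(G)>\min_{G'\in\mc{G}_n}\cyc(G')$ whenever $\alpha(G)\le n-2$. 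I will use the characterisation that for $G'=K_{n-1,n+1}+F'\in\mc{G}_n$ a set $S$ is cyclic iff $c(F'[S\cap B])\le|S\cap A|\le|S\cap B|$ (apart from $O(n^2)$ degenerate sets), where $c(\cdot)$ counts connected components --- this holds because a Hamilton cycle of $K_{k,m}$ together with a disjoint union of paths and cycles on the $m$-side exists exactly when that structure decomposes into $k$ vertex-disjoint paths, a Hall/defect-type condition. In particular $\cyc(G')=\bigl(\tfrac12+\Theta(n^{-1/2})\bigr)2^{2n}$, the leading correction coming only from the constraint $|S\cap A|\le|S\cap B|$, with the $F'$-dependent part an $e^{-\Omega(n)}2^{2n}$ term; so $\min_{\mc{G}_n}\cyc$ differs from any fixed $\cyc(G')$ by at most $e^{-\Omega(n)}2^{2n}$.

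\emph{Stability phase.} I would first show there are $\eps,\gamma>0$ such that $\alpha(G)\le n-1-\gamma n$ forces $\cyc(G)\ge(\tfrac12+\eps)2^{2n}$, which together with the bound above settles this case. This is where robustness of Hamiltonicity enters: $G$ is a super-Dirac graph, and unless it is close to the bipartite extremal configuration, resilience/absorption arguments in the spirit of those cited in the introduction should give a $(\tfrac12+\eps)$-proportion of vertex subsets $S$ with $G[S]$ Hamiltonian; heuristically one can build a Hamilton cycle in $G[S]$ for random $S$ unless a near-bipartition obstructs, and the latter costs a constant fraction of subsets only when $\alpha(G)$ is within $o(n)$ of $n-1$.

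\emph{Exact phase.} Assume now $n-1-\gamma n\le\alpha(G)\le n-2$. Take a maximum independent set, extend it to $A$ with $|A|=n-1$, put $B:=V(G)\setminus A$, and set $t:=e_G(A)\ge1$. Counting degrees gives $e_G(A,B)=n^2-1-2t$ and $e_G(B)=n+1+t$, so $G$ differs from an ideal member of $\mc{G}_n$ by having $t$ edges inside $A$, $2t$ absent $A$--$B$ edges, and $t$ extra edges inside $B$. I would proceed in three steps. (i) \emph{Bootstrapping}: using the rigidity forced by $(n+1)$-regularity and the maximality of the independent set, reduce to the case where $t$ is small and, crucially, the $2t$ absent $A$--$B$ edges are well spread --- few at any vertex, and containing no large complete bipartite pattern. (ii) \emph{Classification}: the characterisation above persists, suitably perturbed, for $G$ itself; in particular a set $S$ with $|S\cap A|\le|S\cap B|$ is cyclic in $G$ unless the absent $A$--$B$ edges inside $S$ create a Hall-type deficiency, while the extra edges inside $B$ only help (they can only lower component counts). (iii) \emph{Comparison}: let $G^*\in\mc{G}_n$ be obtained from $G$ by inserting the $2t$ missing $A$--$B$ edges and then deleting $t$ edges from $A$ and $t$ suitable edges from $B$ (so each vertex of $B$ retains $B$-degree two). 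Already a single edge $aa'$ inside $A$ yields $\Omega(2^{2n}/\sqrt n)$ sets $S$ with $|S\cap A|=|S\cap B|+1$ and $\{a,a'\}\subseteq S$ that are cyclic in $G$ (route the Hamilton cycle through $aa'$ and alternate sides otherwise, which needs only a Hamilton path in the near-complete bipartite part) but not in $G^*$ (there $S\cap A$ is an independent set larger than $\tfrac12|S|$); and by step (i) the absent $A$--$B$ edges destroy only $o(2^{2n}/\sqrt n)$ cyclic subsets. Hence $\cyc(G)\ge\cyc(G^*)+\Omega(2^{2n}/\sqrt n)>\cyc(G^*)\ge\min_{\mc{G}_n}\cyc$.

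The main obstacle is the exact phase --- in particular the bootstrapping of step (i) together with the precise estimation of the $\Theta(2^{2n}/\sqrt n)$-order gain and loss terms. Unlike in the stability phase, the competing quantities differ from $2^{2n}$ only by polynomial factors, so the relevant counts must be controlled with genuine precision (via local central limit theorems for the occurring Vandermonde-type binomial sums), and one must rule out, using the structure of near-extremal $(n+1)$-regular graphs, exactly those configurations of absent $A$--$B$ edges (clustered, or concentrated at a vertex) that would make the loss term competitive with the gain.
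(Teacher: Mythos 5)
The high-level framework you propose (stability phase + exact phase, reduction to near-bipartite structure, the characterisation of cyclic sets in $\mc{G}_n$ via component counts / linear forests) is morally aligned with the paper's approach, and your observation that $\mc{G}_n$ is exactly the class with $\alpha(G)=n-1$ is correct (though the reason is degree-counting, not that ``$A$ extends'': a vertex of $B$ with a non-neighbour in $A$ need not be non-adjacent to all of $A$; what forces $G[A,B]$ complete is $e(A,B)=(n-1)(n+1)=|A||B|$). However, the independence number $\alpha(G)$ is not the parameter the proof can be organised around, and this creates a concrete obstruction that your sketch does not resolve.

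Specifically, consider the competing construction discussed in the paper's concluding remarks: $K_{n,n}$ with $\sqrt{n}$ vertex-disjoint $\sqrt{n}$-vertex stars added inside each part (adjusted to be $(n+1)$-regular). Here $\alpha(G)\approx n-\sqrt{n}$, so for any fixed $\gamma>0$ this graph lands in your ``exact phase'' regime $\alpha(G)\ge n-1-\gamma n$. But when you extend a maximum independent set to a set $A$ of size $n-1$, you must add roughly $\sqrt{n}$ vertices each adjacent to $\Theta(\sqrt{n})$ or $\Theta(n)$ vertices of $A$, giving $t=e_G(A)=\Theta(n)$ (or larger). Your step (i), ``reduce to the case where $t$ is small and the missing $A$--$B$ edges are well spread,'' is not carried out, and this example shows the reduction is nontrivial: there are genuinely near-extremal graphs (by your $\alpha$-criterion) with $t=\Theta(n)$. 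For these the gain/loss bookkeeping in step (iii) is no longer $\Theta(2^{2n}/\sqrt{n})$ vs.\ $o(2^{2n}/\sqrt{n})$; both sides scale with $t$ and the comparison requires a quantitative mechanism you have not supplied. The paper avoids this by parametrising not by $\alpha(G)$ but by the sizes $\alpha\sqrt{n},\beta\sqrt{n}$ of minimum vertex covers of the two halves of a balanced cut, for which regularity forces the constraint $(\alpha\sqrt{n}+1)(\beta\sqrt{n}+1)\geq n+1$ (\Cref{lem:balancedcutbigmatching}); this constraint is what drives both the asymptotic lower bound (via the key \Cref{lem:half+} and the calculus \Cref{lem:calculus}) and the exact second-order analysis. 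Under that parametrisation the star construction has $\min\{\alpha,\beta\}=\Theta(1)$, so it is handled by the key lemma rather than by a delicate exact comparison.

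Two further gaps: the stability phase is asserted via ``resilience/absorption arguments should give'' without any argument (the paper needs the full Krivelevich--Lee--Sudakov trichotomy plus \Cref{lem:bidense,lem:near2c,lem:nearbip} here), and the loss estimate ``the absent $A$--$B$ edges destroy only $o(2^{2n}/\sqrt{n})$ cyclic subsets'' is exactly the hard point --- if even a single $B$-vertex is missing $\Theta(\sqrt{n})$ of its $A$-neighbours, a naive Hall-type count of lost sets is already of order $2^{2n}/\sqrt{n}$, competitive with your claimed gain, and ruling this out is where the paper expends most of its effort (Cases A1, A2, B in the proof of \Cref{thm:main+}, together with the precise binomial estimates of \Cref{lem:pn,lem:compare}). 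Finally, your construction of $G^*$ by deleting $t$ edges inside $B$ to leave a $2$-factor presupposes that $G[B]$ contains a spanning $2$-regular subgraph, which is not automatic; this is minor (one can compare to an arbitrary $G'\in\mc{G}_n$ instead) but signals that the comparison is not the simple edge-toggling one might hope for.
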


In particular, this shows that \Cref{conj:EF} holds with $c=1/2$ for large $n$,
as for any $G' \in \mc{G}_n$ the proportion of subsets that are cyclic is
$\tfrac12 + \tfrac{3/2}{\sqrt{n\pi}} + O(n^{-3/2})$ (see \Cref{lem:pn} below).

Remark (2) above suggests the question of what minimum degree is needed 
in a graph on $n$ vertices to ensure $\cyc(G) = \Omega(2^n)$. 
A similar construction with more stars shows that
minimum degree $n/2+o(\sqrt{n})$ is not sufficient. On the other hand, a well-known degree sequence generalisation of Dirac's theorem due to Chv\'atal \cite{chvatal1972hamilton} implies that a minimum degree of $n/2+\Omega(\sqrt{n})$ is sufficient.

\begin{proposition} \label{thm:mindeg}
Any graph $G$ on $n$ vertices with minimum degree $\dD(G) \ge n/2+\Omega(\sqrt{n})$
has  $\cyc(G) \ge \Omega(2^n)$.
\end{proposition}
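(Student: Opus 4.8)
The plan is to pass to a uniformly random vertex subset: form $S\sub V(G)$ by including each vertex independently with probability $1/2$, and show that $G[S]$ is Hamiltonian with probability bounded below by an absolute constant. Since $S$ is then uniform over all $2^n$ subsets of $V(G)$, this immediately yields $\cyc(G)=\OO(2^n)$. Write $\dD(G)\ge n/2+C\sqrt n$, where $C$ is a large absolute constant to be chosen, and fix a small absolute constant $\eps>0$ (say $\eps=1/10$). The key input is Chv\'atal's theorem \cite{chvatal1972hamilton}: a graph on $m\ge 3$ vertices with degree sequence $d_1\le\cdots\le d_m$ is Hamiltonian unless there is some $i<m/2$ with $d_i\le i$ and $d_{m-i}\le m-i-1$. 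The point is that this condition is far more robust under random sampling than Dirac's bound $\dD\ge m/2$: it tolerates up to roughly $\eps m$ vertices whose degree dips slightly below $m/2$, provided no vertex has very small degree.

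First I would record the elementary concentration facts. With probability $1-e^{-\OO(n)}$ we have $|S|\in[n/3,2n/3]$; call the complementary event $B_0$. Conditioning on $|S|=m$ with $n/3\le m\le 2n/3$, for each $v\in S$ the degree $d_{G[S]}(v)=|N_G(v)\cap S|$ is a hypergeometric random variable with mean $(m-1)\tfrac{d_G(v)}{n-1}\ge \tfrac{m-1}{2}+\OO(C\sqrt n)$. Hence a Chernoff/Hoeffding bound gives: (i) $\Pr[d_{G[S]}(v)\le \eps m]\le e^{-\OO(n)}$, so by a union bound the event $B_1$ that some $v\in S$ has $d_{G[S]}(v)\le\eps m$ satisfies $\Pr[B_1]\le \Pr[B_0]+ne^{-\OO(n)}=o(1)$; and (ii) $\Pr[d_{G[S]}(v)<m/2]\le e^{-\OO(C^2)}$, since this is a deviation of order $C\sqrt n$ from the mean. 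Call $v$ \emph{deficient} if $v\in S$ and $d_{G[S]}(v)<|S|/2$. Then (ii) gives $\mathbb E\big[\#\{\text{deficient }v\}\cdot\mathbf 1[\neg B_0]\big]\le n\,e^{-\OO(C^2)}$, so by Markov's inequality the event $B_2$ that at least $\eps|S|$ vertices are deficient satisfies $\Pr[B_2]\le \Pr[B_0]+\tfrac{3}{\eps}e^{-\OO(C^2)}$, which is $<1/5$ once $C$ is large enough in terms of the absolute constant $\eps$ (hence absolute).

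Next I would check that on $\neg B_0\cap\neg B_1\cap\neg B_2$ the graph $G[S]$ satisfies Chv\'atal's condition, and is therefore Hamiltonian. Write $m=|S|\ge n/3\ge 3$ with degree sequence $d_1\le\cdots\le d_m$; I must show that for every $i<m/2$ either $d_i\ge i+1$ or $d_{m-i}\ge m-i$. If $i\le\eps m$, then $d_i\ge d_1>\eps m\ge i$ by $\neg B_1$, so $d_i\ge i+1$. If $\eps m<i<m/2$, then $d_i\le i$ would force at least $i>\eps m$ vertices to have degree at most $i<m/2$, i.e.\ at least $\eps m$ deficient vertices, contradicting $\neg B_2$; so again $d_i\ge i+1$. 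Thus Chv\'atal's condition holds.

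Combining everything, $\Pr[G[S]\text{ not Hamiltonian}]\le \Pr[B_0]+\Pr[B_1]+\Pr[B_2]<1/4$ for $n$ large, so $S$ is cyclic with probability at least $3/4$; since the $2^n$ subsets of $V(G)$ are equiprobable, $\cyc(G)\ge 2^n\cdot\Pr[S\text{ cyclic}]\ge (3/4-o(1))2^n=\OO(2^n)$. The only genuinely delicate point is the quantitative estimate in the second step: plain Dirac would require \emph{every} vertex of $G[S]$ to have degree at least $|S|/2$, which fails with probability close to $1$, and it is precisely the slack of order $\sqrt n$ in the degree hypothesis --- converted into a free large constant $C$ --- that makes each vertex deficient with probability only $e^{-\OO(C^2)}$, small enough for a Markov bound on the number of deficient vertices and hence for the more forgiving Chv\'atal criterion to apply. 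The matching lower-bound construction sketched in the text ($K_{n/2,n/2}$ with extra stars) shows that slack of order $\sqrt n$ is genuinely necessary.
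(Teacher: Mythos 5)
Your approach — sampling $S$ uniformly, verifying Chv\'atal's condition for $G[S]$ via Chernoff-type concentration, and using the slack of order $\sqrt n$ in the degree hypothesis to control the number of ``deficient'' vertices — is the same strategy the paper sketches. The specific robust-Chv\'atal condition you verify (minimum degree $>\eps m$ and fewer than $\eps m$ vertices of degree below $m/2$, giving $d_i>i$ for every $i<m/2$) is a clean choice and slightly different from the paper's (minimum degree $\geq 0.49m$ and at least $m/2$ vertices of degree $\geq m/2$), but the idea is the same.

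There is, however, a genuine gap. You open with ``Write $\delta(G)\geq n/2+C\sqrt n$, where $C$ is a large absolute constant to be chosen,'' but $C$ is not yours to choose: the hypothesis ``$\delta(G)\geq n/2+\Omega(\sqrt n)$'' means $\delta(G)\geq n/2+c\sqrt n$ for \emph{some} given constant $c>0$, and the conclusion should hold for every such $c$ (with the implicit constant in $\Omega(2^n)$ allowed to depend on $c$). Your argument breaks when $c$ is small. Conditioning on $|S|=m\approx n/2$, the degree $d_{G[S]}(v)$ has mean $\approx m/2+\tfrac12 c\sqrt n$ and standard deviation $\approx \tfrac14\sqrt n$, so the probability a vertex is deficient is roughly $\Phi(-2c)$, which tends to $1/2$ as $c\to 0$. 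Hence the expected number of deficient vertices is close to $m/2$, vastly exceeding your threshold $\eps m=m/10$, and the Markov bound on $\Pr[B_2]$ gives nothing. The Chernoff estimate $e^{-O(C^2)}$ is not wrong but is vacuous for small $C$, and this is not an artefact of a weak inequality: the true deficiency probability really is near $1/2$.

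The repair is to let $\eps$ depend on $c$ rather than fixing it at $1/10$. For any $c>0$ one has $\Phi(-2c)<1/2$, so you may choose $\eps=\eps(c)\in(\Phi(-2c),1/2)$. With such an $\eps$ the minimum-degree event $d_{G[S]}(v)>\eps m$ still holds whp (because $\eps<1/2$ is fixed), while Markov gives $\Pr[\#\{\text{deficient}\}\geq\eps m]\leq (1+o(1))\Phi(-2c)/\eps<1$, i.e.\ a bound strictly below $1$ by a margin depending on $c$. Feeding this into your verification of Chv\'atal's condition gives $\Pr[G[S]\text{ Hamiltonian}]\geq\Omega_c(1)$, and hence $\cyc(G)\geq\Omega_c(2^n)$, which is what the proposition asserts. (You would also want to condition on $|S|\geq(1/2-\tau)n$ for small $\tau$ rather than $|S|\geq n/3$, to avoid the wasteful factor of $3$ in the Markov step, which otherwise eats your margin.) This parametrised choice of threshold is implicitly what the paper's sketch is doing by taking the deficient-count threshold up near $m/2$.
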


To deduce \Cref{thm:mindeg} from \cite{chvatal1972hamilton} it suffices to observe via Chernoff bounds 
that for $G$ as in the statement, a positive fraction of all induced subgraphs $G'$ of $G$ 
have minimum degree at least $0.49|V(G')|$ and at least $|V(G')|/2$ vertices of degree at least $|V(G')|/2$;
we omit further details.

\medskip

\nib{Notation.}
Let $G$ be a graph.
We denote its vertex set by $V(G)$, its edge set by $E(G)$,
its minimum degree by $\delta(G)$ and its maximum degree by $\Delta(G)$.
For $A \sub V(G)$ we write $\ov{A} = V(G) \sm A$.
For $v \in V(G)$ we let $d_G(v,A)$ count neighbours of $v$ in $A$
and $\bar{d}(v,A)=|A|-d_G(v,A)$ count non-neighbours of $v$ in $A$. We denote by $e(G)$ the number of edges in $G$.

For $A,B \sub V(G)$ we write $G[A]$ for the subgraph of $G$ induced by $A$. 
We let $e(A)=e(G[A])$ and $e(A,B)=\{(a,b)\in E(G)\mid a\in A, b\in B\}$, where edges in $A\cap B$ are counted twice. 
If $A$ and $B$ are disjoint, we denote by $G[A,B]$ the bipartite subgraph of $G$ induced by $(A,B)$.
We also use $\bar{e}$ for non-edges, e.g.~writing $\bar{e}(A,B)=|A||B|-e(A,B)$ for disjoint $A,B$.

Let $G$ be a graph on $n$ vertices.
If $A \sub V(G)$ with $|A|\in\{\lfloor{n/2}\rfloor,\lceil n/2\rceil\}$ we call $A$ a \emph{half-set}.
A cut of $G$ is a partition of $V(G)$ into two parts.
If $(A,B)$ is a cut of $G$ where $A,B$ are half-sets, we call $(A,B)$ a \emph{balanced cut}.

We write $a \ll b$ if for any $b>0$ there is $a>0$ such that the following statement holds;
hierarchies with more parameters are defined similarly.
We write $a = o_t(1)$ if $a \to 0$ as $t \to \infty$; if $t$ is not specified then $n$ is understood.
Our asymptotic notation $O(\cdot)$, $\Omega(\cdot)$, etc. refers to large $n$.
We say that an event $E$ holds with high probability (whp) if $\mb{P}(E)=1-o(1)$.

\medskip

\nib{Organisation.}
We start with an overview in \Cref{sec:overview},
where we prove \Cref{conj:EF} modulo three lemmas.
These lemmas are proved in \Cref{sec:ham}, 
which concerns various sufficient conditions for Hamiltonicity in near-Dirac graphs.
\Cref{sec:asymptotic} contains the proof of our asymptotic result, \Cref{thm:main}, which states that \Cref{conj:EF} holds with $c=1/2-o(1)$. Extending the methods developed in previous sections, including a more delicate analysis, we prove the exact result \Cref{thm:main+} in \Cref{sec:exact}.
\Cref{sec:rem} contains some concluding remarks.

\section{Overview} \label{sec:overview}

The first ingredient of our proof is a well-known classification of Dirac graphs into three cases,
which we will call `bi-dense', `almost two cliques' and `almost bipartite'.
Here we use the following lemma of Krivelevich, Lee and Sudakov \cite{krivelevich2014robust},
which has its origins in the work of Koml\'os, Sark\"ozy and Szemer\'edi \cite{komlos1998proof} 
(see also \cite{sarkozy2008distributing, cuckler2009hamiltonian}). 
For the first case, we recall that a half-set in a graph is a set containing $\lfloor n/2 \rfloor$ or  $\lceil n/2 \rceil$ vertices.

\begin{lemma}[\cite{krivelevich2014robust}] \label{lem:casesdirac}
Let $\eps\le\frac{1}{320}$ and $\gamma \le\frac{1}{10}$ be fixed
positive reals such that $\gamma \ge 32\eps$. If $n$ is large
enough, then for every graph $G$ on $n$ vertices with minimum degree
at least $\frac{n}{2}$, one of the following cases holds:
\begin{enumerate}[(i)]
  \setlength{\itemsep}{1pt} \setlength{\parskip}{0pt}
  \setlength{\parsep}{0pt}
\item \textbf{(Bi-dense)} $e(A,B)\ge\eps n^{2}$ for all half-sets $A$ and $B$ (not necessarily disjoint),
\item \textbf{(Almost two cliques)} There exists $A \sub V(G)$ of size $\frac{n}{2}\le|A|\le(\frac{1}{2}+16\eps)n$
such that $e(A,\ov{A})\le6\eps n^{2}$ and the induced subgraphs
on both $A$ and $\ov{A}$ have minimum degree at least $\frac{n}{5}$,
or
\item \textbf{(Almost bipartite)} There exists $A \sub V(G)$ of size $\frac{n}{2}\le|A|\le(\frac{1}{2}+16\eps)n$
such that  $G[A,\ov{A}]$ has at least $(\frac{1}{4}-14\eps)n^{2}$
edges and minimum degree at least $\frac{\gamma}{2}n$.
Moreover, either $|A|=\lceil\frac{n}{2}\rceil$, or the induced
subgraph $G[A]$ has maximum degree at most $\gamma n$.
\end{enumerate}
\end{lemma}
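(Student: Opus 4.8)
The plan is a case analysis triggered by the failure of bi-density. If (i) holds we are done, so suppose there are half-sets $A$ and $B$ (possibly overlapping) with $e(A,B)<\eps n^2$. The whole argument then runs on a single observation: since $\delta(G)\ge n/2$, for every $v\in A$ we have $d(v,\ov B)=d(v)-d(v,B)\ge n/2-d(v,B)$, so summing over $v\in A$ and using $|A|,|\ov B|=n/2+O(1)$ gives $\bar e(A,\ov B)<2\eps n^2$; that is, $A$ is almost complete to $\ov B$, and symmetrically $B$ is almost complete to $\ov A$.

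First I would pass to the refinement $V=P\sqcup Q\sqcup R\sqcup S$ with $P=A\sm B$, $Q=B\sm A$, $R=A\cap B$, $S=\ov A\cap\ov B$, so that $|P|=|Q|=:p$ and $|R|=|S|=:r$ up to additive constants and $p+r=n/2+O(1)$. Unwinding the two near-completeness statements shows that $G[P]$ and $G[Q]$ are almost complete and that each of the pairs $(P,R)$, $(P,S)$, $(Q,R)$, $(Q,S)$, $(R,S)$ spans an almost complete bipartite graph, whereas $e(A,B)<\eps n^2$ forces $e(P,Q)$, $e(P,R)$, $e(Q,R)$ and $e(R)$ all to be at most $\eps n^2$. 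Comparing $e(P,R)\le\eps n^2$ with the near-completeness of the pair $(P,R)$ gives $pr<3\eps n^2$, so, since $p+r\approx n/2$, one of $p$, $r$ is $O(\eps n)$; the first alternative yields case (ii) and the second case (iii).

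If $r=O(\eps n)$, then deleting the $O(\eps n)$ vertices of $R\cup S$ leaves the disjoint union of two almost complete sets $P,Q$ of order $n/2+O(\eps n)$ joined by fewer than $\eps n^2$ edges; distributing $R\cup S$ suitably over the two sides gives a balanced cut $(A',\ov{A'})$ with $n/2\le|A'|\le(1/2+16\eps)n$ and $e(A',\ov{A'})\le6\eps n^2$, while the near-completeness of $G[P]$, $G[Q]$ (with $\delta(G)\ge n/2$ covering the relocated vertices) yields minimum degree at least $n/5$ in both $G[A']$ and $G[\ov{A'}]$; this is case (ii). If instead $p=O(\eps n)$, then $R$ and $S$ each have order $n/2+O(\eps n)$, $G[R,S]$ is almost complete bipartite, and the original cut $(A,\ov A)$ already has $(\tfrac14-O(\eps))n^2$ crossing edges. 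Removing from $A$ the $O(\eps n)$ vertices of small $\ov A$-degree (few, as the relevant pairs are near-complete) and re-balancing by $O(\eps n)$ vertices makes $G[A,\ov A]$ have minimum degree at least $\gamma n/2$ while keeping $|A|$ a half-set with at least $(\tfrac14-14\eps)n^2$ crossing edges. Finally, to get the dichotomy in the ``moreover'' clause, run a cleaning loop that, while $|A|>\lceil n/2\rceil$, moves out of $A$ any vertex of $G[A]$-degree exceeding $\gamma n$: this strictly decreases $|A|$, so it stops either with $\Delta(G[A])\le\gamma n$ or with $|A|=\lceil n/2\rceil$, and at most $O(\eps n/\gamma)$ vertices are ever moved because $e(R)\le\eps n^2$.

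The logical skeleton above is short; the substance --- and the reason for the precise constants $6\eps$, $14\eps$, $16\eps$, $n/5$, $\gamma/2$ and the hypothesis $\gamma\ge32\eps$ --- lies in the bookkeeping of the last two paragraphs, where one must verify that the relocated and cleaned vertices can be absorbed while all the numerical constraints hold simultaneously, the inequality $\gamma\ge32\eps$ being exactly what keeps the $O(\eps n/\gamma)$ cleaning error underneath every threshold. I expect this quantitative juggling, rather than any conceptual difficulty, to be the main obstacle. The one structural wrinkle --- that the half-sets $A,B$ witnessing the failure of bi-density need not be disjoint, so there is no single balanced cut to begin with --- is handled cleanly by working with the four-part refinement $P,Q,R,S$ from the outset.
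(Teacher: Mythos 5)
The paper does not prove this lemma --- it is imported verbatim from Krivelevich, Lee and Sudakov \cite{krivelevich2014robust}, so there is no in-paper proof to compare against. That said, your sketch does capture the natural route: from a violating pair of half-sets $A,B$, use $\delta(G)\ge n/2$ to convert $e(A,B)<\eps n^2$ into near-completeness of $A$ to $\ov B$ and $B$ to $\ov A$ (your $\bar e(A,\ov B)<2\eps n^2$ is correct), refine into $P,Q,R,S$, and deduce $|P||R|<3\eps n^2$, forcing $\min(|P|,|R|)=O(\eps n)$ and hence the two-cliques or almost-bipartite alternative. This is a sound and essentially standard skeleton.

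However, two of the steps you wave off as bookkeeping are not purely routine, and as stated your cleaning argument has a gap. For the ``moreover'' clause, moving a vertex $v$ with $d(v,A)>\gamma n$ out of $A$ changes $e(A,\ov A)$ by $d(v,A)-d(v,\ov A)=2d(v,A)-d(v)$, which can be of order $-n$ when $d(v)$ is large, and the number of such moves can be as large as $O(\eps n/\gamma)=O(n/32)$; the crude per-move estimate therefore does \emph{not} keep $e(A,\ov A)$ within the allowed $14\eps n^2$ slack. The rescue is to note instead that $e(A,\ov A)=\sum_{v\in A}d(v)-2e(A)\ge |A|\cdot n/2-2e(A)$, so as long as $|A|\ge n/2$ and $e(A)=O(\eps n^2)$ (both preserved under deletions) the crossing-edge count stays at $(1/4-O(\eps))n^2$ --- but this global accounting, rather than per-move tracking, is what makes the loop work, and it is missing from your sketch. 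Similarly, the claim that the relocation/re-balancing steps preserve $\delta(G[A,\ov A])\ge\gamma n/2$ (in case (iii)) and $\delta(G[A]),\delta(G[\ov A])\ge n/5$ (in case (ii)) requires choosing a degree threshold strictly larger than the target (e.g.\ $\gamma n$ rather than $\gamma n/2$) so the $O(\eps n)$ moved vertices cannot push good vertices below the bar; you gesture at this but the slack budget, and the fact that $\gamma\ge 32\eps$ is precisely what makes it close, is the actual content of the lemma and needs to be written out. In short: correct architecture, but the quantitative closure --- especially for the ``moreover'' clause --- is not yet a proof.
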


Given this classification, the proof of \Cref{conj:EF} will be completed by the following three lemmas,
which give the required probability in each of the three cases 
that a random induced subgraph $\Gh$ is Hamiltonian,
thus proving the following probabilistic reformulation of  \Cref{conj:EF}.

\begin{theorem}\label{thm:main-} 
Any  $(n+1)$-regular graph $G$ on $2n$ vertices 
has $\mb{P}(\Gh \text{ is Hamiltonian}) > \Omega(1)$.
\end{theorem}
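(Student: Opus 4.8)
The plan is to prove \Cref{thm:main-} (equivalently \Cref{conj:EF}) by splitting into the three cases provided by \Cref{lem:casesdirac}, applied to $G$ on $n'=2n$ vertices with minimum degree $n+1 \ge n'/2$. In each case we will exhibit a suitable structural property of $G[\tfrac12]$ --- the random subset $S$ obtained by including each vertex independently with probability $1/2$ --- that holds with probability $\Omega(1)$ and that forces Hamiltonicity via one of the three Hamiltonicity lemmas advertised for \Cref{sec:ham}. A key preliminary observation, used throughout, is that by standard binomial concentration (Chernoff), with probability $1-o(1)$ the random half determines subsets of roughly the right size, and more importantly that for any fixed vertex $v$ and fixed set $A$, the codegree $d_G(v, A\cap S)$ is concentrated around $\tfrac12 d_G(v,A)$; one should also note that $|S|$ concentrates around $n$ with fluctuations of order $\sqrt n$, so we do not get a perfectly balanced situation but only an approximately balanced one, which is exactly why the examples in $\mc G_n$ (with parts of size $n\pm 1$) are extremal.

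First I would handle the \textbf{bi-dense} case: here $e(A,B) \ge \eps n'^2$ for all half-sets $A,B$. I expect the corresponding Hamiltonicity lemma to say, roughly, that a graph on $m$ vertices with minimum degree $(\tfrac12-o(1))m$ in which every pair of half-sets spans $\Omega(m^2)$ edges is Hamiltonian. Then I would argue that whp $S$ has size $(1\pm o(1))n$, that every vertex of $S$ has $d_G(v, S) \ge (\tfrac12 - o(1))|S|$ (since $v$ has $\ge n+1$ neighbours in $G$ and about half survive), and that the bi-density is inherited by $G[S]$ after halving twice, i.e. any two half-sets of $S$ span $\Omega(n^2)$ edges --- this last point needs a union bound over the (exponentially many) half-sets of $S$, which is affordable because the relevant Chernoff bound gives failure probability $e^{-\Omega(n^2)}$ for a fixed pair while there are only $2^{O(n)}$ pairs. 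So in this case $G[\tfrac12]$ is Hamiltonian whp, which is more than $\Omega(1)$.

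Next, the \textbf{almost two cliques} case with witness set $A$: $G$ is close to a disjoint union of two cliques on roughly $n$ vertices each, each inducing min-degree $\ge n'/5$. Since each side contains $(1-o(1))n^2$ edges and $S$ restricted to each side is a random subset, whp $G[S\cap A]$ and $G[S\cap \ov A]$ are each dense near-cliques on $(1\pm o(1))n/2$ vertices, hence individually Hamiltonian (indeed Dirac-type: min degree survives halving and stays well above half). But we need a single cycle through all of $S$, so we must cross between the two sides. Since $G$ is $(n+1)$-regular while $|A| \approx n$, \emph{every} vertex in $A$ has $\ge 1$ neighbour in $\ov A$ --- in fact on average about one, so typically a positive fraction of vertices have a cross-neighbour; whp there are at least two independent crossing edges between $S\cap A$ and $S \cap \ov A$ (using regularity to lower bound the number of crossing edges incident to $S$, this happens with probability $\Omega(1)$, possibly not $1-o(1)$, which is fine). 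Given two disjoint crossing edges and Hamilton paths with prescribed endpoints on each dense side (a routine strengthening of Dirac applied to $G[S\cap A]$ and $G[S\cap \ov A]$), we stitch a Hamilton cycle of $G[S]$. I would expect the relevant lemma in \Cref{sec:ham} to package exactly this ``two dense pieces joined by two edges'' argument.

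Finally, the \textbf{almost bipartite} case, which I expect to be \textbf{the main obstacle}, since this is where the extremal examples live. Here $G$ has a cut $(A,\ov A)$ carrying $\ge (\tfrac14 - o(1))n'^2$ edges with small crossing min-degree, and $G[A]$ has small maximum degree (or $|A| = \lceil n'/2 \rceil$ exactly). The danger is precisely remark (1) in the introduction: a random subset $S$ typically has $|S\cap A| \ne |S \cap \ov A|$, and an unbalanced (near-)bipartite graph is not Hamiltonian. The point is that the imbalance $|S\cap A| - |S\cap \ov A|$ is a sum of $\pm 1$'s, so it is $0$ with probability $\Theta(1/\sqrt n)$ --- but $\Theta(1/\sqrt n)$ is not $\Omega(1)$! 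The resolution must be that the small number of non-crossing edges (inside $A$, inside $\ov A$, forced on $\ov A$ by regularity as in the $\mc G_n$ construction) lets a Hamilton cycle absorb a bounded imbalance: each edge inside, say, $\ov A$ used by the cycle shifts the ``parity budget'' by one, so the cycle can tolerate an imbalance up to roughly the number of available interior edges it can use. Since $G$ is $(n+1)$-regular and $e(A,\ov A) \le \tfrac14 n'^2$-ish but with min crossing degree $\ge \tfrac\gamma2 n'$, there are $\Omega(n)$ interior edges available on the larger side, vastly more than the typical imbalance $O(\sqrt n)$. So conditioned on the whp event $\big||S\cap A| - |S\cap \ov A|\big| = O(\sqrt n)$ (which already has probability $\Omega(1)$, indeed $1-o(1)$), we need a Hamiltonicity lemma for near-bipartite graphs with a controlled number of interior edges and bounded imbalance. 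Assembling this lemma --- tracking how many interior edges a Hamilton cycle must use to fix a given imbalance, and showing the host graph supplies them in the right places --- is the technical heart, and it is the case that must be tight against $\mc G_n$, so it cannot afford much slack. I would prove it by taking a near-perfect matching structure on the bipartite part, rotating/absorbing using the interior edges to correct the imbalance, and finishing with a bipartite Hamiltonicity criterion (Moon--Moser type) on the balanced remainder.
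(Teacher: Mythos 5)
Your three-case decomposition matches the paper's, and the bi-dense and almost-two-cliques sketches are in the right spirit (the paper finds a size-$\Omega(\sqrt n)$ crossing matching for the latter via K\"onig's theorem and regularity, from which two disjoint crossing edges in $\Gh$ follow whp). But two pieces of your argument do not go through as stated.

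First, in the bi-dense case, the union bound you propose fails. For a fixed pair $A,B\subseteq V(G)$, the deviation of $e(G[A\cap S, B\cap S])$ from its mean is controlled by Azuma with variance proxy $\sum_v d_G(v)^2 = \Theta(n^3)$, so the failure probability for a $\Theta(\eps n^2)$ deviation is $e^{-\Theta(\eps^2 n)}$, not $e^{-\Omega(n^2)}$; this is too weak to beat the $2^{\Theta(n)}$ choices. The paper instead uses the Frieze--Kannan weak regularity lemma: the regular partition is a fixed object that lets one transfer bi-density between half-sets of $G$ and half-sets of $S$ essentially deterministically, with no union bound over sets.

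Second, and more critically, your almost bipartite case has a genuine gap. You claim that since the larger side carries $\Omega(n)$ interior edges, this is ``vastly more'' than the typical imbalance $O(\sqrt n)$ that needs absorbing. But the Hamilton cycle cannot consume raw edges --- it can only route through a \emph{linear forest} of interior edges, and $\Omega(n)$ edges concentrated on a few vertices (say, a handful of large stars) produce a linear forest of only bounded size. Nothing you have said rules this out. The paper's crucial structural input here is \Cref{lem:balancedcutbigmatching}: for any balanced cut $(A,B)$ of an $(n+1)$-regular graph on $2n$ vertices, minimum vertex covers $A',B'$ of $G[A],G[B]$ satisfy $(|A'|+1)(|B'|+1)\ge n+1$, so at least one side must carry a matching of size $\Omega(\sqrt n)$. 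Even with this, the available linear forest in $\Gh$ has size only $\Theta(\sqrt n)$ --- \emph{comparable to}, not vastly larger than, the imbalance --- and the imbalance must moreover land in a one-sided window of width $\Theta(\sqrt n)$ favouring the side that actually has the matching, which happens only with probability bounded away from $1$. That is precisely why the final bound is $\Omega(1)$ rather than $1-o(1)$. Without the vertex-cover product bound (or an equivalent regularity-based substitute) the probability estimate in this case does not close.
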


In the first two cases, Hamiltonicity of $\Gh$ holds whp;
it is the third case that determines the bound on the probability,
as one would expect given the extremal example.
The main difficulty of the proof will lie in the third case,
and the accuracy of our probability estimates here determine
whether we prove \Cref{thm:main-}, or the stronger asymptotic result \Cref{thm:main},
or the even stronger exact result \Cref{thm:main+}.

\begin{lemma} \label{lem:bidense}
Let $\eps>0$ and $G$ be an $(n+1)$-regular graph on $2n$ vertices. 
Suppose that $e(G[A,B])\ge\eps n^{2}$ for all half-sets $A$ and $B$.
Then whp $\Gh$ is Hamiltonian.
\end{lemma}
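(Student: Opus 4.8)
The plan is to show that $\Gh$ (a random induced subgraph obtained by keeping each vertex independently with probability $1/2$) inherits, with high probability, a minimum-degree and expansion condition strong enough to guarantee Hamiltonicity. The key point is that $\Gh$ typically has $(1+o(1))n$ vertices, and each surviving vertex $v$ retains roughly half of its $\approx n$ neighbours; since $G$ is $(n+1)$-regular, a Chernoff bound shows that whp every surviving vertex has degree at least $(1/2 - o(1)) \cdot n \ge (1/4 + o(1))|V(\Gh)|$ — so $\Gh$ is not quite Dirac, but its minimum degree is a positive proportion of its order. To get Hamiltonicity from this we need expansion, which is where the hypothesis $e(G[A,B]) \ge \eps n^2$ for all half-sets $A,B$ comes in.

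First I would fix the relevant parameters and apply Chernoff/union bounds to establish the following deterministic-looking properties of $\Gh$ that hold whp: (a) $|V(\Gh)| = n + O(\sqrt{n\log n})$, in particular $\Gh$ has $n(1+o(1))$ vertices; (b) $\delta(\Gh) \ge n/4$, say; and (c) a bi-density statement survives: for all half-sets $A', B'$ of $V(\Gh)$ we have $e(\Gh[A',B']) \ge (\eps/5) n^2$ (or some similar constant). Property (c) follows because any half-set $A'$ of $\Gh$ is contained in $V(G)$ and has size $\approx n/2$, so it can be padded to a genuine half-set $A$ of $G$; the hypothesis gives many edges in $G[A,B]$, and standard concentration (each such edge survives in $\Gh$ with probability $1/4$, with enough independence after an appropriate partitioning/two-round exposure argument) shows a constant fraction survive, uniformly over the exponentially many choices of $A',B'$ — here one uses that the number of half-sets is only $2^{O(n)}$ while the failure probability for a fixed pair is $e^{-\Omega(n^2)}$.

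Then I would feed these properties into a known Hamiltonicity criterion. The cleanest route is a sparse-expander / rotation-extension type theorem: a graph on $m$ vertices with minimum degree $\Omega(m)$ in which every pair of disjoint linear-sized sets is joined by an edge (equivalently, every linear-sized set expands) is Hamiltonian — this is classical (Pósa rotation-extension, as used e.g. in Komlós–Szemerédi or in the Krivelevich–Lee–Sudakov framework). Properties (b) and (c) give exactly the connectivity and expansion needed: minimum degree $n/4$ rules out small bad sets, and bi-density of linear-sized sets handles the rotation-extension endgame. Alternatively, one can invoke the robust-Hamiltonicity machinery of \cite{krivelevich2014robust} directly, since a bi-dense Dirac-type graph is robustly Hamiltonian and $\Gh$ is, up to the $o(n)$ slack in the degree, such a graph; the $1/2$ versus $1/4$ discrepancy in the degree bound is harmless because bi-density already supplies the expansion that the Dirac bound would otherwise be needed for.

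The main obstacle is step (c): establishing that bi-density survives the random sparsification \emph{uniformly} over all half-sets of $\Gh$. The subtlety is twofold — the half-sets of $\Gh$ are not known in advance (they depend on which vertices survive), and within a fixed pair $(A,B)$ of half-sets of $G$ the edges of $G[A,B]$ are not independently present in $\Gh$ (the survival of an edge $ab$ is determined by the survival of its two endpoints, so edges sharing a vertex are correlated). The standard fix is a two-round exposure or a Janson-type inequality: condition on the vertex exposure, note that $e(\Gh[A,B])$ as a function of the independent vertex indicators has bounded Lipschitz/bounded-difference behaviour relative to its expectation $\Theta(\eps n^2)$, apply a concentration inequality for such functions (McDiarmid, or a martingale argument splitting on vertices), and then union-bound over the $\le 4^n$ pairs of candidate half-sets; since the deviation we need to rule out is a constant factor of $\eps n^2$ and the bound-difference is $O(n)$ per vertex with $n$ vertices, the concentration gives failure probability $e^{-\Omega(n^2/n)} \cdot$ — one must be a little careful here and instead use the multiplicative Chernoff-type bound for the number of surviving edges, which does give $e^{-\Omega(\eps^2 n^2)}$ and comfortably beats the union bound. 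Once (a)–(c) are in hand, the Hamiltonicity conclusion is essentially off-the-shelf.
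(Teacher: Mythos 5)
Your overall strategy --- show that $\Gh$ inherits a near-Dirac degree bound and a bi-density condition, then invoke a Hamiltonicity criterion --- matches the paper's, but there is a genuine gap in your step (c) (inheritance of bi-density), and this gap is exactly what the paper's appeal to the Frieze--Kannan regularity lemma (\Cref{lem:FK}) is designed to close.

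The problem is the assertion that a ``multiplicative Chernoff-type bound for the number of surviving edges gives $e^{-\Omega(\eps^2 n^2)}$''. The quantity $e(G[A\cap S, B\cap S])$, viewed as a sum over edges $uv$ of $G[A,B]$ of the indicator that both $u$ and $v$ lie in $S$, is not a sum of independent indicators: two edges sharing a vertex are strongly correlated, so Chernoff does not apply. The correct tools (McDiarmid/Azuma, or Janson for the lower tail) give only $e^{-\Omega(\eps^2 n)}$: changing a single vertex's membership in $S$ can change the edge count by up to $\Delta(G)=n+1$, and there are $2n$ vertices, so the variance proxy is $\Theta(n^3)$, not $\Theta(n^2)$. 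This rate is essentially sharp: a bi-dense $(n+1)$-regular $G$ may have a half-set pair $(A,B)$ with $e(G[A,B])=\eps n^2$ and all of those edges incident to a set of only $O(\sqrt{\eps}\,n)$ vertices (each of degree up to $n+1$); if those vertices all avoid $S$, an event of probability $2^{-O(\sqrt{\eps}\,n)}$, the surviving count is zero. Since there are $\binom{2n}{n}^2 = 16^{(1+o(1))n}$ candidate pairs of half-sets, a union bound with per-pair failure probability $e^{-\Omega(\eps n)}$ (or $e^{-O(\sqrt{\eps} n)}$) cannot succeed for any fixed small $\eps$. So a naive union bound over half-sets is doomed.

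The paper avoids this obstacle by never taking a union bound over half-sets at all. It applies \Cref{lem:FK} to obtain a partition $(V_1,\dots,V_t)$ into a bounded number of parts, and then needs only the boundedly many quantities $|S\cap V_i|$ to concentrate, which Chernoff handles easily. Given any $A',B'\subseteq S$ of size $\sim n/2$, one ``doubles'' them to half-sets $A,B$ of $V(G)$ with proportional intersections with each $V_i$; the FK lemma then transfers $e(G[A,B])\ge\eps n^2-o(n^2)$ to $e(G[A',B'])\ge\eps n^2/4-o(n^2)$ with no union bound over the choices of $(A',B')$. The inherited bi-density, together with $\delta(\Gh)\approx|V(\Gh)|/2$, is then fed into the Koml\'os--S\'ark\"ozy--Szemer\'edi stability form of Dirac's theorem (\Cref{thm:diracstability}).

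A minor additional point: your degree estimate $\delta(\Gh)\ge(1/4+o(1))|V(\Gh)|$ is a substantial underestimate; in fact $\delta(\Gh)\approx n/2\approx(1/2-o(1))|V(\Gh)|$, so $\Gh$ is essentially a Dirac graph. This is harmless on its own, but it is precisely what lets the paper apply the Dirac stability theorem off the shelf rather than resorting to a rotation--extension argument.
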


\begin{lemma} \label{lem:near2c}
Let $n^{-1} \ll \eps \ll  1$ and $G$ be an $(n+1)$-regular graph on $2n$ vertices. 
Suppose there is some $A \sub V(G)$ with $n \le|A|\le(1+32\eps)n$
such that $e(A,\ov{A}) \le 24\eps n^{2}$ 
and $G[A]$, $G[\ov{A}]$ both have minimum degree at least $2n/5$. 
Then whp $\Gh$ is Hamiltonian.
\end{lemma}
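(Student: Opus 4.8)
The plan is to show that with high probability the random induced subgraph $\Gh$ inherits enough structure from $G$ to be Hamiltonian via a standard Dirac-type (or Chv\'atal-type) argument, after carefully handling the small deficiencies introduced by the sparse cut $(A,\ov A)$. First I would set $V_A = V(\Gh)\cap A$ and $V_B=V(\Gh)\cap \ov A$, so that each $v\in A$ satisfies $|V(\Gh)|\approx n$, $|V_A|\approx |A|/2$, $|V_B|\approx |\ov A|/2$, and by Chernoff bounds all of these hold up to additive error $O(\sqrt n\log n)$ whp, uniformly over vertices. Since $G[A]$ and $G[\ov A]$ have minimum degree at least $2n/5$, again by Chernoff each surviving vertex $v\in V_A$ has at least (roughly) $n/5 - O(\sqrt n \log n)$ neighbours inside $V_A$ whp, and similarly for $V_B$; the point is that $\Gh[V_A]$ and $\Gh[V_B]$ are each individually ``robustly Hamiltonian-connected''-type graphs (minimum degree comfortably above half of $|V_A|$, resp.\ $|V_B|$, since $|V_A|\approx|A|/2\le (1+32\eps)n/2$ and the internal degree is $\gtrsim n/5 \gg |V_A|/2$ once $\eps$ is small). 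So $\Gh[V_A]$ and $\Gh[V_B]$ are each Hamiltonian whp, and in fact Hamiltonian-connected whp, by the classical robust Hamiltonicity results (e.g.\ \cite{krivelevich2014robust}) applied to these dense pieces.

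The remaining issue is to glue a Hamilton cycle of $\Gh[V_A]$ to one of $\Gh[V_B]$ using edges of the cut. Here the danger is precisely that $e(A,\ov A)$ is small: a priori the cut in $\Gh$ could be empty. However, $G$ is $(n+1)$-regular, so every vertex $v\in A$ has $d_G(v,\ov A) = (n+1) - d_G(v,A) \ge (n+1) - (|A|-1) \ge n - 32\eps n +1$ only if $A$ is small --- more usefully, summing gives $e(A,\ov A) = \sum_{v\in A} d_G(v,\ov A)$ and, since the stated bound $e(A,\ov A)\le 24\eps n^2$ forces $|A|\ge n$ (so $|\ov A|\le n$, hence $d_G(v,\ov A)\le n$ for $v\in A$ is automatic), I instead look at $\ov A$: each $w\in \ov A$ has $d_G(w,\ov A)\le |\ov A|-1 \le n-1$, so $d_G(w,A) = (n+1)-d_G(w,\ov A)\ge 2$; more strongly, the total cut degree is $e(A,\ov A)$, which, while $O(\eps n^2)$, must still be at least $|\ov A| \cdot 2$ and is distributed so that $|\ov A| \ge n - 32\eps n$ vertices of $\ov A$ have positive cut-degree. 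I claim one can find in $G$ a constant number (say four) of vertex-disjoint cut edges $e_1,e_2$ with endpoints in ``typical'' positions; indeed, by regularity and $|\ov A|\ge n-32\eps n$, the cut is nonempty and, by a simple averaging / Hall-type argument using that $\ov A$ is not too small, one can extract two disjoint edges $a_1b_1, a_2b_2$ with $a_i\in A$, $b_i\in \ov A$. Then whp (again Chernoff, since ``survives in $\Gh$'' is an independent $1/2$-coin per vertex) with probability $\Omega(1)$ --- not whp, but a constant --- all four endpoints $a_1,a_2,b_1,b_2$ land in $\Gh$; conditioning on this event and on the high-probability structure above, Hamiltonian-connectedness of $\Gh[V_A]$ gives a Hamilton path of $\Gh[V_A]$ from $a_1$ to $a_2$, Hamiltonian-connectedness of $\Gh[V_B]$ gives a Hamilton path of $\Gh[V_B]$ from $b_1$ to $b_2$, and the two cut edges $a_1b_1$, $a_2b_2$ close these into a Hamilton cycle of $\Gh$.

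To upgrade the constant-probability conclusion to ``whp'', I would instead not fix the two cut edges in advance but argue that whp there are $\Omega(\eps n^2)$ cut edges surviving in $\Gh$ with well-spread endpoints: since $e(A,\ov A)\le 24\eps n^2$ is an upper bound, this direction needs care, so the cleaner route is to note that regularity forces $e(A,\ov A) \ge |\ov A|\cdot 1 \ge (1-32\eps)n$, hence a linear-in-$n$ number of cut edges, of which $\Omega(n)$ survive in $\Gh$ whp, and among these one can greedily select two vertex-disjoint ones avoiding any bad vertex set of size $O(\sqrt n \log n)$. Combined with Hamiltonian-connectedness of the two dense halves (which holds whp), this yields a Hamilton cycle of $\Gh$ whp, as desired. \textbf{The main obstacle} I anticipate is the gluing step: ensuring that the cut edges used to join the two sides have endpoints that are compatible with the Hamiltonian-connectedness conclusions (which may themselves require avoiding a small exceptional set of vertices), and doing the bookkeeping so that all the ``whp'' and ``positive probability'' events are over disjoint sets of random choices, or are correlated in the right direction. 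Everything else is a routine concentration argument plus a black-box application of robust Hamilton-connectivity for near-complete graphs.
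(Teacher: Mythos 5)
Your overall strategy — find a Hamilton path in each side and glue them through two disjoint cut edges — is the same as the paper's, which does this via Lemma~\ref{lem:twocliqueshamilton}. However, there is a genuine arithmetic error at the heart of your plan that makes the key step fail as written. You claim that $\Gh[V_A]$ is Hamilton-connected because its minimum degree is ``$\gtrsim n/5 \gg |V_A|/2$''. But $|V_A|\approx |A|/2 \ge n/2$, so $|V_A|/2 \approx n/4 > n/5$: the minimum degree of $\Gh[V_A]$ is in fact \emph{below} half its order, so neither Dirac's theorem nor any Dirac-type Hamilton-connectedness statement applies (indeed, $G[A]$ itself is not a Dirac graph, since $|A|\ge n$ but $\delta(G[A])$ is only assumed to be $2n/5<n/2$). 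The degree hypothesis alone cannot possibly suffice: a graph on $n/2$ vertices with minimum degree $n/5$ may well fail to be Hamiltonian. What saves the day, and what your sketch omits entirely, is the \emph{density} of $G[A]$: regularity forces $G[A]$ to have at most $O(\eps n^2)$ non-edges, so only $O(\eps n)$ vertices can have low degree, and these can be absorbed by a short ``cherry matching'' before invoking Hamilton-connectedness on the remaining genuinely-dense core. This is exactly what the paper's Lemma~\ref{lem:twocliqueshamilton} does, and it is the missing ingredient in your argument.

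A secondary, lesser gap is in producing the two disjoint cut edges with high probability. Your fallback reasoning (``$\Omega(n)$ cut edges survive in $\Gh$, greedily select two vertex-disjoint ones'') does not obviously work: the expected number of surviving cut edges is only slightly above the maximum degree of $\Gh$, so $\Omega(n)$ surviving edges need not contain a matching of size $2$ without further argument. What one really wants is that $G[A,\ov A]$ contains a matching of size $\omega(1)$ (so that whp at least two edges of that matching survive in $\Gh$); showing this is not as immediate as ``a simple averaging / Hall-type argument'' suggests, and the paper devotes a separate lemma (Lemma~\ref{lem:xmatch}), using K\"onig's theorem together with a careful double-count that exploits regularity, to establish a matching of size $\sqrt n/100$ in the cut. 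You should either prove such a matching bound or cite a substitute; as written this step is not justified.
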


\begin{lemma} \label{lem:nearbip}
Let $n^{-1} \ll \eps \ll \gG \ll 1$ and $G$ be an $(n+1)$-regular graph on $2n$ vertices. 
Suppose there is some $A \sub V(G)$ with $n \le|A|\le(1+32\eps)n$
such that $G[A,\ov{A}]$ has at least $(1-56\eps)n^{2}$
edges and minimum degree at least $\gamma n$, where if  $|A|>n$ then
 $G[A]$ has maximum degree at most $\gamma n$.
Then $\mb{P}[ \Gh \text{ is Hamiltonian}] \ge 0.01$.
\end{lemma}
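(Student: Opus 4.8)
Write $a=|A|$ and $b=|\ov A|=2n-a$, so that $n\le a\le(1+32\eps)n$ and $(1-32\eps)n\le b\le n$. Expose the vertex subset $V(\Gh)$ and put $A'=A\cap V(\Gh)$ and $B'=\ov A\cap V(\Gh)$; then $|A'|$ and $|B'|$ are independent binomial random variables with means $a/2$ and $b/2$. The plan is to isolate a ``good'' event $\mc E$, depending only on $V(\Gh)$, which forces $\Gh$ to be Hamiltonian and satisfies $\Pr[\mc E]\ge 0.01$ --- in fact $\Pr[\mc E]$ will be bounded well away from $0$, and it equals $\tfrac12-o(1)$ precisely in the extremal configurations of $\mc G_n$, where it is genuinely this small.

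Call a vertex \emph{bad} if it has at least $\sqrt\eps\,n$ non-neighbours across the cut $(A,\ov A)$. Since $\bar e(A,\ov A)\le 56\eps n^2$ there are only $O(\sqrt\eps\,n)$ bad vertices, and each of them still has at least $\gamma n$ cross-neighbours by hypothesis. Writing $\pi(H)$ for the least number of vertex-disjoint paths covering $V(H)$, the event $\mc E$ asks that: (i) $|A'|=\tfrac a2\pm n^{0.6}$ and $|B'|=\tfrac b2\pm n^{0.6}$; (ii) at most $\sqrt\eps\,n$ bad vertices lie in $V(\Gh)$, every bad survivor retains at least $\gamma n/3$ cross-neighbours, and every non-bad survivor loses only $O(\sqrt\eps\,n)$ cross-neighbours; and (iii) $\pi(G[A'])\le|B'|$ and $\pi(G[B'])\le|A'|$.

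Assume $\mc E$. Since there are $O(\sqrt\eps\,n)\ll\gamma n$ bad survivors, each with $\ge\gamma n/3$ cross-neighbours present, one greedily reserves a family of pairwise disjoint paths with non-bad interiors that contains every bad survivor, and deletes the interiors of these paths. What remains has a cross-bipartite part that is complete up to $o(n)$ edges and of minimum degree $\ge\gamma n/3$, with the two sides still obeying the inequalities of (iii) up to an $o(n)$ error. Choosing any integer $r$ with $\max(\pi(G[A']),\pi(G[B']))\le r\le\min(|A'|,|B'|)$, partition the $A$-side into $r$ paths of $G[A']$ and the $B$-side into $r$ paths of $G[B']$, arrange the two families cyclically in alternation, join consecutive paths through the near-complete cross-bipartite graph, and finally splice in the reserved paths through the bad vertices. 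The interleaving step is a Hamiltonicity statement for graphs consisting of a near-complete bipartite graph between parts $X$, $Y$ of small but positive minimum degree together with arbitrary (possibly very sparse) graphs inside $X$ and inside $Y$; near-completeness --- available since $\eps\ll\gamma$ --- compensates for the low minimum degree, and this is exactly the kind of statement established in \Cref{sec:ham}.

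Parts (i) and (ii) of $\mc E$ hold whp by Chernoff bounds and a union bound over the $O(\sqrt\eps\,n)$ bad vertices. For (iii), let $L_A$ and $L_B$ be the numbers of edges in a maximum linear forest surviving inside $A'$, respectively $B'$, so that $\pi(G[A'])\le|A'|-L_A$ and $\pi(G[B'])\le|B'|-L_B$; since $L_A,L_B\ge0$ while $|A'|-|B'|$ and $|B'|-|A'|$ have opposite signs, the two inequalities of (iii) cannot fail simultaneously, so the failure probability of (iii) equals $\Pr[L_A<|A'|-|B'|]+\Pr[L_B<|B'|-|A'|]$. This is where $(n+1)$-regularity is essential: double counting degrees gives $e(G[A])=\tfrac12\big((n+1)a-e(A,\ov A)\big)\ge\tfrac12 a(a-n+1)$, $\delta(G[A])\ge a-n+1$ and $\Delta(G[A])\le(1-\gamma)n+1$ (and $\le\gamma n$ when $a>n$), so $G[A]$ carries a linear forest with $\Omega((a-n+1)/\gamma)$ edges, which (as $\gamma<\tfrac1{16}$) exceeds $|a-n|$; if moreover $a-n\gg\sqrt n\log n$, a concentrated constant fraction of this forest survives inside $A'$ and dominates $|A'|-|B'|$ whp. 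When $a-n$ is small, so that $e(G[A])$ may be as small as $\approx n/2$, one argues structurally instead: a graph of minimum degree $\ge1$ and maximum degree $\le(1-\gamma)n$ with few linear-forest edges must be essentially a disjoint union of large stars, so most of $A$ is complete to $\ov A$, which by regularity forces $G[\ov A]$ to be close to $1$-regular and hence to carry a linear forest of size $\Omega(\sqrt n)$; in this regime $L_A$ and $L_B$ each exceed a fixed multiple of $\sqrt n$ whp, and since $|A'|-|B'|$ has Gaussian fluctuations of order $\sqrt n$ this caps the failure probability of (iii) by a constant strictly below $1$. Either way $\Pr[\mc E]\ge 0.01$. \emph{The main obstacle is exactly this last step}: turning the counting identities forced by regularity into a uniform guarantee that, for every admissible $G$, one of $G[A']$, $G[B']$ has enough surviving linear-forest capacity to absorb the imbalance $|A'|-|B'|$ with the correct probability; a secondary obstacle is running the near-bipartite Hamiltonicity input of \Cref{sec:ham} at minimum degree as low as $\gamma n$, far below half the order, which is feasible only because the cross-bipartite graph omits a mere $O(\eps n^2)\ll\gamma n^2$ edges.
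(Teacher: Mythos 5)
The high-level structure of your argument matches the paper's: expose the random vertex set, reduce Hamiltonicity of $\Gh$ to a deterministic criterion (dense cross-bipartite graph with adequate minimum degree, plus a linear forest in one side absorbing the imbalance), and estimate the probability of the balancing event by a Gaussian approximation. The deterministic criterion you describe is essentially the paper's \Cref{lem:bipartitehamilton}, and your interleaving of $r$ paths from each side is a plausible variant of its cherry-matching and path-merging argument.

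The genuine gap, which you yourself flag as the main obstacle, is in establishing the key quantitative input for the balancing event when the imbalance $|A|-n$ is small: that at least one of $G[A]$, $G[\ov{A}]$ carries a linear forest of size $\Omega(\sqrt{n})$. Your chain ``few linear-forest edges in $G[A]$ $\Rightarrow$ union of large stars $\Rightarrow$ most of $A$ complete to $\ov{A}$ $\Rightarrow$ $G[\ov{A}]$ close to $1$-regular $\Rightarrow$ $\Omega(\sqrt{n})$ linear forest'' breaks at the last two steps. Having most of $A$ nearly complete to $\ov{A}$ only controls the \emph{average} degree of $G[\ov{A}]$, not its maximum; a graph with minimum degree $\ge 1$ and small average degree can still be a union of a few huge stars and have an $O(1)$-size matching, so ``close to $1$-regular'' does not follow and neither does the $\Omega(\sqrt{n})$ forest. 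What is actually needed is the sharp product inequality $(|C_A|+1)(|C_B|+1)\ge n+1$ for minimum vertex covers $C_A,C_B$ of the two sides of a balanced cut --- this is the paper's \Cref{lem:balancedcutbigmatching}, whose proof is a careful double count of edges within and across the cut that your sketch does not reproduce. Relatedly, your claim that ``$L_A$ and $L_B$ each exceed a fixed multiple of $\sqrt{n}$ whp'' is false in the extremal graphs $\mc{G}_n$, where $G[\ov{A}]$ is empty so $L_B=0$; only $\max(L_A,L_B)$ is forced to be $\Omega(\sqrt{n})$, and while this weaker (correct) statement still yields a failure probability for (iii) bounded strictly below $1$, it is exactly the fact that remains to be proved. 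A minor further omission is the rebalancing step --- moving $|A|-n$ vertices across before invoking the cover product inequality, which is stated for balanced cuts --- but that is routine once the product inequality is available.
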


We prove these lemmas in the following section,
thus completing the proof of  \Cref{conj:EF}.

\section{Conditions for Hamiltonicity} \label{sec:ham}

In this section we formulate conditions for Hamiltonicity
in each of the three cases from \Cref{lem:casesdirac}.
We use these to prove the lemmas stated in the previous section,
thus completing the proof of \Cref{conj:EF}.

\subsection{Bi-dense graphs}

In this subsection we prove \Cref{lem:bidense},
which establishes \Cref{thm:main-} for bi-dense graphs.
This will follow from the following stability result for Dirac's theorem,
which is \cite[Lemma 11]{komlos1996square}.

\begin{theorem}[Stability for Dirac's theorem]\label{thm:diracstability}
For any $\eps>0$, if $n$ is large
and $G$ is a graph on $n$ vertices with  $\delta(G) \geq (\frac{1}{2}-\eps)n$
then one of the following holds:
(i) $G$ is Hamiltonian,
(ii) $e(G[A])=0$ for some $A \sub V(G)$ with $|A|=(1/2-\eps)n$, or
(iii) $e(G[A,B]) \le n$ for some disjoint $A,B \sub V(G)$ with $|A|=|B|=(1/2-\eps)n$.
\end{theorem}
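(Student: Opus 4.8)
The plan is to prove the contrapositive: assuming $\delta(G)\ge(\tfrac12-\eps)n$ and that $G$ is not Hamiltonian, I will exhibit either an independent set of size $(\tfrac12-\eps)n$ (giving (ii)), or disjoint $A,B$ of that size with $e(G[A,B])\le n$ (giving (iii)). The first reduction handles low connectivity. If $\kappa(G)\le 2$, fix a vertex cut $S$ with $|S|\le 2$: every vertex of a component $D$ of $G-S$ has all of its at least $(\tfrac12-\eps)n$ neighbours inside $D\cup S$, so $|D|\ge(\tfrac12-\eps)n-1$, and since the component sizes sum to $n-|S|$ this forces exactly two components $D_1,D_2$, with at least one of size at least $(\tfrac12-\eps)n$ (using $\eps>0$ and $n$ large). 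Choosing $A$ to consist of $(\tfrac12-\eps)n$ vertices of $D_1\cup S$ containing at most one vertex of $S$, and $B$ similarly inside $D_2\cup S$ and disjoint from $A$, one gets $e(G[A,B])\le|D_1|+|D_2|+1\le n-1$, since $D_1$ and $D_2$ send no edges to each other; hence (iii) holds. From now on $G$ is $3$-connected.

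Now I would split on the independence number. If $\alpha(G)\ge(\tfrac12-\eps)n$ then (ii) holds, so assume $\alpha(G)<(\tfrac12-\eps)n$, and the goal becomes to derive a contradiction by showing $G$ is Hamiltonian. Take a longest cycle $C$ and suppose $W:=V(G)\sm V(C)\ne\es$. For $v\in W$, writing $N_C(v)=N_G(v)\cap V(C)$ and $N_C(v)^+$ for the successors of these vertices around $C$, the usual rerouting of a longest cycle shows $\{v\}\cup N_C(v)^+$ is independent: if two of its vertices were adjacent, or a vertex of $N_C(v)^+$ were adjacent to $v$, one could detour $C$ through $v$ into a longer cycle. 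Hence $1+d_G(v,V(C))\le\alpha(G)<(\tfrac12-\eps)n$, so $d_G(v,W)=d_G(v)-d_G(v,V(C))\ge 2$; thus $G[W]$ has minimum degree at least $2$ and contains a cycle $Z$. On the other hand, $3$-connectivity implies each component of $G-V(C)$ has at least three neighbours on $C$. Combining $Z$, these attachments, and the minimum-degree bound, a rerouting/exchange argument produces a cycle longer than $C$ — the desired contradiction. Therefore $W=\es$ and $G$ is Hamiltonian.

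The step I expect to be the real obstacle is exactly this last rerouting in the $3$-connected case — i.e.\ a quantitatively sharp version of the principle that a $3$-connected graph with $\delta(G)\ge(\tfrac12-\eps)n$ and no independent set of size $(\tfrac12-\eps)n$ is Hamiltonian — together with making all constants line up (the naive estimate $d_G(v,V(C))\ge d_G(v)-|W|$ is too lossy, so one must control the attachments of $G-V(C)$ on $C$ quite carefully, distinguishing whether the longest cycle is short because the graph splits or because a large independent set blocks its closure). A convenient way to organise the bookkeeping is to pass first to the Bondy--Chv\'atal closure of $G$, which is still non-Hamiltonian, still has minimum degree at least $(\tfrac12-\eps)n$, and additionally has the property that non-adjacent vertices have degree sum at most $n-1$; feeding this into Chv\'atal's degree criterion \cite{chvatal1972hamilton} forces a very restricted near-extremal degree sequence, after which a direct case analysis separates the ``large independent set'' and ``two dense halves'' alternatives, and since an independent set of the closure is one of $G$ this suffices. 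The detailed execution of this case analysis is carried out in \cite[Lemma 11]{komlos1996square}.
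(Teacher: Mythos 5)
The paper does not prove this theorem: it cites it as \cite[Lemma 11]{komlos1996square} and records only a brief sketch, which case-splits on whether a longest cycle $C_1$ has length close to $n$ (deriving (ii) via the P\'osa rotation/independent-set argument) or close to $n/2$ (deriving (iii) from the $\le n$ edges between $C_1$ and a second disjoint long cycle $C_2$). Your decomposition is genuinely different: you split on connectivity, obtaining (iii) from a small vertex cut when $\kappa(G)\le 2$, and attempting to force Hamiltonicity outright when $\kappa(G)\ge 3$ and $\alpha(G)<(1/2-\eps)n$. The low-connectivity half of your argument is correct and essentially complete (and gives the stronger bound $e(G[A,B])\le n-1$). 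Notably, in the paper's route conclusion (iii) can arise from two long cycles with few crossing edges in a graph of large connectivity, so your route reaches the same dichotomy by a different structural handle.

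The gap is exactly where you flag it: in the $3$-connected case you show every vertex off a longest cycle $C$ has at least two neighbours outside $C$, that $G-V(C)$ has a cycle $Z$, and that each component of $G-V(C)$ has three disjoint paths to $C$; but the final sentence ``a rerouting/exchange argument produces a cycle longer than $C$'' is an assertion, not an argument — it is far from clear how $Z$ and the three attachment paths combine into a longer cycle without much more bookkeeping about where the attachments land on $C$ relative to one another. The alternative route you sketch via the Bondy--Chv\'atal closure and Chv\'atal's degree condition \cite{chvatal1972hamilton} is similarly unfinished (``a direct case analysis separates the two alternatives''). Worth noting: the precise claim your $3$-connected case needs — that a $2$-connected graph with $\delta(G)\ge\max\{\alpha(G),(n+2)/3\}$ is Hamiltonian — is a classical theorem of Nash-Williams, which, once cited, would close the gap cleanly (and indeed would let you weaken $\kappa(G)\ge 3$ to $\kappa(G)\ge 2$, since under your hypotheses $\alpha(G)<(1/2-\eps)n\le\delta(G)$). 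As written, though, the crucial step is asserted rather than proved, and you ultimately defer to the reference for the details — which is also all the paper itself does.
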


We remark that this source of this result appears not to be widely known (it is sometimes considered folklore)
and that a nice short proof is given in the appendix of \cite{komlos1996square}, using the following ideas.
Supposing that $G$ is not Hamiltonian, one shows that the longest cycle $C_1$ has length $\sim n$ or $\sim n/2$,
and in the latter case there is a disjoint cycle $C_2$ of length $\sim n/2$.
In the first case, a set $A$ as in (ii) is obtained by the standard Chv\'atal-Erd\H{o}s / P\'osa argument:
consider any vertex not in $C_1$ and its shifted neighbourhood on $C_1$.
In the second case, by maximality of $C_1$ there can be at most $n$ edges from $C_1$ to $C_2$.

The idea of the proof of \Cref{lem:bidense} is that whp $\Gh$ satisfies the hypotheses
of \Cref{thm:diracstability} and does not satisfy conclusion (ii) or (iii),
so satisfies conclusion (i), meaning it is Hamiltonian.
The hypotheses are easy consequences of the following well-known Chernoff bound 
(see e.g.~\cite[Theorem~2.1]{janson2011random}),
which will be used throughout the paper.

\begin{theorem}[Chernoff bound]
Let $X$ be a binomial random variable and $\delta>0$. Then 
\[\mb{P}[ |X-\mb{E}X| \ge \delta \mb{E}X ] \le 2\exp(-\delta^2\mb{E}X/(2+\delta)).\]
\end{theorem}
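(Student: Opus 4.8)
The plan is to use the standard exponential-moment (Bernstein--Chernoff) method. Write $X\sim\mathrm{Bin}(N,p)$ and set $\mu:=\mb{E}X=Np$; we may assume $\mu>0$, as otherwise the claimed bound is trivial. Everything rests on the moment generating function estimate
\[
\mb{E}\bigl[e^{tX}\bigr]=\bigl(1+p(e^{t}-1)\bigr)^{N}\le\exp\bigl(Np(e^{t}-1)\bigr)=\exp\bigl(\mu(e^{t}-1)\bigr),
\]
valid for all real $t$ by the inequality $1+x\le e^{x}$.

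For the upper tail I would apply Markov's inequality to $e^{tX}$ with $t>0$, obtaining $\mb{P}[X\ge(1+\delta)\mu]\le e^{-t(1+\delta)\mu}\,\mb{E}[e^{tX}]\le\exp\bigl(\mu(e^{t}-1-t(1+\delta))\bigr)$, and then optimise by taking $t=\ln(1+\delta)>0$, which gives $\mb{P}[X\ge(1+\delta)\mu]\le\exp\bigl(\mu(\delta-(1+\delta)\ln(1+\delta))\bigr)$. It then remains to check the scalar inequality
\[
\delta-(1+\delta)\ln(1+\delta)\le-\frac{\delta^{2}}{2+\delta}\qquad(\delta>0),
\]
which I would prove by verifying that $h(\delta):=(1+\delta)\ln(1+\delta)-\delta-\tfrac{\delta^{2}}{2+\delta}$ satisfies $h(0)=h'(0)=0$ and $h''(\delta)=\tfrac{1}{1+\delta}-\tfrac{8}{(2+\delta)^{3}}\ge0$, the last being equivalent to $(2+\delta)^{3}\ge8(1+\delta)$, which is clear for $\delta\ge0$. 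This yields $\mb{P}[X\ge(1+\delta)\mu]\le\exp(-\delta^{2}\mu/(2+\delta))$.

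For the lower tail: if $\delta\ge1$ then $(1-\delta)\mu\le0$, so $\mb{P}[X\le(1-\delta)\mu]$ equals $0$ when $\delta>1$ and is at most $\mb{P}[X=0]=(1-p)^{N}\le e^{-\mu}\le\exp(-\delta^{2}\mu/(2+\delta))$ when $\delta=1$. If $0<\delta<1$, apply Markov to $e^{-tX}$ with $t=-\ln(1-\delta)>0$ to get, by an analogous optimisation, $\mb{P}[X\le(1-\delta)\mu]\le\exp\bigl(\mu(-\delta-(1-\delta)\ln(1-\delta))\bigr)$; the expansion $(1-\delta)\ln(1-\delta)=-\delta+\sum_{k\ge2}\frac{\delta^{k}}{k(k-1)}\ge-\delta+\tfrac{\delta^{2}}{2}$ then gives $-\delta-(1-\delta)\ln(1-\delta)\le-\tfrac{\delta^{2}}{2}\le-\tfrac{\delta^{2}}{2+\delta}$, so again $\mb{P}[X\le(1-\delta)\mu]\le\exp(-\delta^{2}\mu/(2+\delta))$.

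Adding the two tail bounds gives $\mb{P}[|X-\mu|\ge\delta\mu]\le\mb{P}[X\ge(1+\delta)\mu]+\mb{P}[X\le(1-\delta)\mu]\le2\exp(-\delta^{2}\mu/(2+\delta))$, which is the desired inequality. The only step that is not purely mechanical is the scalar inequality $\delta-(1+\delta)\ln(1+\delta)\le-\delta^{2}/(2+\delta)$ controlling the upper tail; every other ingredient is routine manipulation of the moment generating function. Since this is an entirely standard fact, in practice one would just cite \cite{janson2011random} rather than reproduce the argument in full.
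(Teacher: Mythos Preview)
Your argument is correct and is the standard Bernstein--Chernoff proof via the moment generating function; the only place one has to think is the scalar inequality $\delta-(1+\delta)\ln(1+\delta)\le-\delta^{2}/(2+\delta)$, and your second-derivative computation handles it cleanly. Note, however, that the paper does not actually prove this statement: it is quoted with a citation to \cite[Theorem~2.1]{janson2011random} and used as a black box throughout, exactly as you yourself suggest in your final sentence.
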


To consider the conclusions of \Cref{thm:diracstability} for $\Gh$,
we will need the standard fact that bidenseness is whp inherited by the random induced subgraph $\Gh$.
For completeness, we will include a short argument for this in the proof of \Cref{lem:bidense},
using the following Frieze-Kannan Regularity Lemma \cite{frieze1999quick}
(or one could use the Szemer\'edi Regularity Lemma,
but this would give much worse bounds for $n$).

\begin{lemma} \label{lem:FK}
For any $\eps>0$ there are $T,n_0$ so that for any graph $G$ on $n \ge n_0$ vertices
there is a partition $(V_1,\dots,V_t)$ of $V(G)$ with $t \le T$ and $|V_1| \le \cdots \le |V_t| \le |V_1|+1$
such that for any $A,B \sub V(G)$ we have
\[ \Big| e(G[A,B]) - \sum_{i,j} \tfrac{|A \cap V_i|}{|V_i|} \tfrac{|B \cap V_j|}{|V_j|} e(G[V_i,V_j]) \Big| < \eps n^2. \]
\end{lemma}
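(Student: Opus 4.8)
The plan is to prove this by the standard energy-increment argument. Fix a graph $G$ on $n$ vertices, write $g\colon V(G)^2\to\{0,1\}$ for the indicator of adjacency (so $g$ vanishes on the diagonal), and regard $V(G)^2$ as a probability space with the uniform measure. To a partition $\mc P=(V_1,\dots,V_t)$ of $V(G)$ associate its step function $d_{\mc P}$, equal to $e(G[V_i,V_j])/(|V_i||V_j|)$ on $V_i\times V_j$; thus $d_{\mc P}=\mb E[g\mid\mc P]$ is the conditional expectation of $g$ with respect to the product partition. Define the \emph{energy} $q(\mc P)=\mb E[d_{\mc P}^2]=n^{-2}\sum_{i,j}e(G[V_i,V_j])^2/(|V_i||V_j|)$, which lies in $[0,1]$ since $g$ is $\{0,1\}$-valued. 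A direct computation (with ordered edge-endpoint counts, which is how $e(G[A,B])$ should be read when $A,B$ overlap) identifies the conclusion of the lemma, for a given $\mc P$, with the statement that $\bigl|\sum_{(u,v)\in A\times B}(g(u,v)-d_{\mc P}(u,v))\bigr|<\eps n^2$ for all $A,B\sub V(G)$; call such a $\mc P$ \emph{good}.

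First I would establish the increment step: if $\mc P$ is not good, witnessed by sets $A,B$, then the common refinement $\mc P'$ of $\mc P$ with $\{A,\ov A\}$ and $\{B,\ov B\}$ satisfies $q(\mc P')\ge q(\mc P)+\eps^2$. Since $\mc P'$ refines $\mc P$ we have $\mb E[d_{\mc P'}\mid\mc P]=d_{\mc P}$, so the Pythagorean identity gives $q(\mc P')-q(\mc P)=\mb E[(d_{\mc P'}-d_{\mc P})^2]$. On the other hand $A$ and $B$ are unions of cells of $\mc P'$, hence $\sum_{A\times B}(g-d_{\mc P'})=0$, so $\bigl|\sum_{A\times B}(d_{\mc P'}-d_{\mc P})\bigr|=\bigl|\sum_{A\times B}(g-d_{\mc P})\bigr|\ge\eps n^2$, and Cauchy--Schwarz over the at most $n^2$ pairs in $A\times B$ yields $\mb E[(d_{\mc P'}-d_{\mc P})^2]\ge\eps^2$. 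Starting from the trivial partition and applying this step whenever the current partition is not good, the energy increases by at least $\eps^2$ each time, so the process halts after at most $\lceil\eps^{-2}\rceil$ rounds; as each round multiplies the number of parts by at most $4$, it terminates with a good partition into at most $4^{\lceil\eps^{-2}\rceil}$ parts.

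It remains to make the partition equitable. I would run the above with parameter $\eps/2$ to obtain a good partition $\mc Q$ with a bounded number $s$ of parts, then refine each part of $\mc Q$ into blocks of a common size $q=\lfloor n/K\rfloor$ for a suitable bounded $K$, choosing the blocks \emph{uniformly at random} inside each part (and redistributing the leftover of size $<sq=O(n/K)$ among the blocks afterwards). A bounded-differences concentration estimate shows that, whp, on every pair of the resulting blocks the edge density is within $\eps/2$ of the corresponding $\mc Q$-density, so the new step function, when tested against any pair $(A,B)$, differs from $d_{\mc Q}$ by at most $(\eps/2)n^2$, and the refined partition remains good with parameter $\eps$; redistributing the $O(n/K)$ leftover vertices perturbs every count $e(G[A,B])$ by only $O(n^2/K)$, which is negligible. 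Taking $T=T(\eps)$ to be the final bound on the number of parts and $n_0=n_0(\eps)$ large enough for these estimates completes the proof. The main obstacle lies entirely in this last step: the analytic heart — Pythagoras plus Cauchy--Schwarz in the increment lemma — is short, whereas in the equalisation one must be somewhat careful to keep the number of parts bounded purely in terms of $\eps$ while simultaneously controlling the leftover redistribution and the block-density fluctuations.
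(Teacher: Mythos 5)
The paper does not prove this lemma: it is the Frieze--Kannan weak regularity lemma, quoted directly from \cite{frieze1999quick} and used as a black box, so there is no in-paper argument for you to match. Your reconstruction via the standard energy-increment argument is essentially correct, and I only have minor comments. The increment step is exactly right: $A,B$ are unions of cells of the refined product partition, so $\sum_{A\times B}(g-d_{\mc{P}'})=0$, and Pythagoras together with Cauchy--Schwarz over $A\times B\subseteq V(G)^2$ give $q(\mc{P}')-q(\mc{P})=\mb{E}[(d_{\mc{P}'}-d_{\mc{P}})^2]\ge\eps^2$; the bound $q\in[0,1]$ then caps the iteration at $\lceil\eps^{-2}\rceil$ rounds, each at most quadrupling the number of parts.

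You correctly flag equitabilisation as the only genuinely fussy part, and your solution is valid. Two remarks. First, the randomness in your chop is not cosmetic in the post-hoc approach you take: a worst-case split of a $\mc{Q}$-cell into size-$q$ blocks can produce block pairs whose density deviates substantially from the ambient $\mc{Q}$-cell density (think of a cell that is half dense to, half sparse to, another), so the pointwise closeness you invoke really does need the bounded-differences concentration estimate; you should say explicitly that this is why you randomise. The more common route avoids this by interleaving equitabilisation with the increment: after each refinement by $\{A,\ov{A}\},\{B,\ov{B}\}$, re-cut into parts of a common target size, note the at most $t_k$ junk cells occupy an $O(1/C)$ fraction of $V(G)^2$ (with $C$ a free parameter) and so cost only $O(1/C)$ in energy, and take $C\gg\eps^{-2}$ so the net increment per round stays $\ge\eps^2/2$; this keeps both the round count and the final part count bounded without any randomness. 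Either route works. Second, your error budget is slightly over: $\eps/2$ from the iteration, $\eps/2$ from the block-density fluctuation, and then a further $O(1/K)$ from the leftover redistribution already exceeds $\eps$; running the iteration with $\eps/3$ and asking the block densities to be within $\eps/3$ of the $\mc{Q}$-densities repairs this trivially.
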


We conclude this subsection by proving \Cref{lem:bidense}.

\begin{proof}[Proof of \Cref{lem:bidense}]
Let $\eps>0$ and $G$ be an $(n+1)$-regular graph on $2n$ vertices. 
Suppose that $e(G[A,B])\ge\eps n^2$ for all half-sets $A$ and $B$.
Consider $\Gh = G[S]$ where $S$ is a random subset of $V(G)$.
By Chernoff bounds, whp $|S| = n \pm n^{0.6}$ and
every vertex has degree $n/2 \pm n^{0.6}$ in $\Gh$.

We intend to apply \Cref{thm:diracstability} with $(|S|,0.1\eps)$ in place of $(n,\eps)$.
The assumptions of this theorem are satisfied,
so to complete the proof, we need to show that whp $G[S]$ 
does not satisfy conclusion (ii) or (iii),
and so it must satisfy conclusion (i), meaning it is Hamiltonian.
To do so, it suffices to show that bidenseness is inherited,
in that whp for any $A',B' \sub S$ with $|A'|,|B'| \sim n/2$
we have $e(G[A',B']) \ge 0.1\eps |S|^{2}$, say.

To see this, let $(V_1,\dots,V_t)$ be a partition of $V(G)$
obtained from \Cref{lem:FK} applied with $0.1\eps$ in place of $\eps$.
By Chernoff bounds, whp each $|S \cap V_i| = \tfrac{n}{2t} \pm n^{0.6}$.
Consider  any $A',B' \sub S$ with $|A'|,|B'| \sim n/2$.
Construct $A,B \sub V(G)$ with $|A|,|B| \sim n$ so that each
$|A \cap V_i| \sim 2|A' \cap V_i|$ and $|B\cap V_i| \sim 2|B' \cap V_i|$.
Then $e(G[A,B])\ge\eps n^2 - o(n^2)$ by bidenseness of $G$.
Applying the conclusion of \Cref{lem:FK} to $(A,B)$ and $(A',B')$,
we see that $e(G[A,B])$ differs by at most $0.1\eps n^2$ from
$\sum :=  \sum_{i,j} \tfrac{|A \cap V_i|}{|V_i|} \tfrac{|B \cap V_j|}{|V_j|} e(G[V_i,V_j])$
and  $e(G[A',B'])$ differs by at most $0.1\eps n^2$ from 
$ \sum_{i,j} \tfrac{|A' \cap V_i|}{|V_i|} \tfrac{|B' \cap V_j|}{|V_j|} e(G[V_i,V_j]) \sim \sum / 4$.
Therefore $e(G[A',B']) \ge 0.1\eps |S|^{2}$.
\end{proof}


\subsection{Almost two cliques}

In this subsection we prove \Cref{lem:near2c},
which establishes \Cref{thm:main-} for graphs that are almost two cliques.
We use the following sufficient condition for Hamiltonicity in such graphs.

\begin{lemma}\label{lem:twocliqueshamilton} 
Let $G$ be a graph on $n$ vertices
and $(A,B)$ be a partition of $V(G)$ with $|A|,|B|\geq 0.49n$.
Suppose that $G[A,B]$ contains two disjoint edges
and $G[A]$, $G[B]$ both have minimum degree at least $n/100$ 
and at most $10^{-4} n^2$ non-edges.
Then $G$ is Hamiltonian.
\end{lemma}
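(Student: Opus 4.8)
The plan is to prove \Cref{lem:twocliqueshamilton} by a \textbf{rotation-extension / absorption-free patching argument}: first build a long cycle inside each of $G[A]$ and $G[B]$ separately, then splice them together using the two disjoint crossing edges. Since $G[A]$ has $|A| \ge 0.49n$ vertices, minimum degree at least $n/100 \ge |A|/100$ and at most $10^{-4}n^2 \le |A|^2/24$ non-edges, I would first argue that $G[A]$ has a Hamiltonian \emph{path} (indeed a Hamiltonian path with prescribed endpoints lying in a large set of vertices). The low non-edge count means all but a bounded number of vertices of $A$ have $G[A]$-degree close to $|A|$; the few low-degree vertices (degree as small as $n/100$) are harmless because there are so few of them. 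A clean way to make this rigorous: delete a small `bad' set of low-degree vertices, find a Hamiltonian cycle on the `good' part via a Chv\'atal--Erd\H{o}s or P\'osa-type argument (the good part has min degree $\ge (1/2+o(1))|A|$ so even Dirac applies after we check sizes), then insert the few bad vertices one at a time between two consecutive good neighbours on the cycle — each bad vertex still has $\ge n/100$ neighbours, and since the cycle has length close to $|A|$ there are plenty of consecutive good pairs both adjacent to it.

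Concretely, the cleanest route avoids endpoints entirely: let $a_1a_2$ and $b_1b_2$ be the two disjoint edges of $G[A,B]$ with $a_1,a_2\in A$ is \emph{wrong} — they are crossing edges, so say $a_1b_1, a_2b_2 \in E(G)$ with $a_1,a_2\in A$ distinct and $b_1,b_2\in B$ distinct. I would find a Hamiltonian path $P_A$ in $G[A]$ from $a_1$ to $a_2$ and a Hamiltonian path $P_B$ in $G[B]$ from $b_1$ to $b_2$; then $P_A$ followed by the edge $a_2b_2$, then $P_B$ reversed, then the edge $b_1a_1$ closes up into a Hamiltonian cycle of $G$. So the real task reduces to: \emph{a graph $H$ on $m$ vertices with $\delta(H)\ge m/100$ and $\bar e(H) \le m^2/24$ (say) contains a Hamiltonian path between any two prescribed vertices $u,v$.} For this, let $X$ be the set of vertices of $H$-degree below $m/3$; since $\bar e(H)\le m^2/24$, we get $|X|\le m/12$. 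The graph $H' = H - (X \setminus \{u,v\})$ has min degree $\ge m/3 - m/12 \ge (1/2+\Omega(1))|V(H')|$ once we also handle $u,v$; find a Hamiltonian $u$–$v$ path in $H'$ (Dirac-type theorems for Hamiltonian paths with fixed endpoints apply when the min degree is above $|V(H')|/2$ plus a constant, or one adds a dummy vertex joined to $u,v$ and invokes Dirac), then greedily insert each $x\in X$ between consecutive path-vertices that are both neighbours of $x$ — feasible because $x$ has $\ge m/100$ neighbours while the current path has only $O(m/12)$ `already inserted' vertices, so a valid insertion slot exists.

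The \textbf{main obstacle} is the mismatch between the weak minimum-degree hypothesis ($n/100$) and the strong structural conclusion needed (Hamiltonian path with \emph{both} endpoints prescribed); a direct appeal to Dirac-type theorems does not apply to the low-degree vertices. Handling this requires the two-phase `core Hamiltonian path plus greedy insertion of exceptional vertices' scheme above, and the bookkeeping in the insertion phase — ensuring that at every step there remains an insertion slot for the next exceptional vertex, and that we never need to insert into a slot adjacent to a previously inserted vertex of too-low degree — is where care is needed. A secondary subtlety is that $u$ or $v$ might itself be a low-degree vertex; this is absorbed by keeping $u,v$ in the `core' graph from the start and only requiring the core Hamiltonian path to use its \emph{other} high-degree vertices to achieve the near-$|V|/2$ degree threshold, or alternatively by first rerouting so that $u,v$ sit at the ends of edges to high-degree vertices. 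Once the two Hamiltonian paths are in hand, the splicing step via the two disjoint crossing edges is immediate and completes the proof.
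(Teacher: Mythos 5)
Your high-level plan matches the paper's: reduce to finding a Hamilton path in $G[A]$ with prescribed endpoints $a_1,a_2$ (and similarly in $G[B]$) and splice via the two crossing edges. However, the step where you re-insert the low-degree exceptional vertices into an already-built core Hamiltonian path has a genuine gap. To insert $x$ between consecutive path-vertices $u,v$ you need \emph{both} $u$ and $v$ adjacent to $x$. Since $|A|\le 0.51n$, the hypothesis $\delta(G[A])\ge n/100$ only gives $x$ roughly $m/51$ neighbours among the $m=|A|$ vertices; the core path has $\approx m$ vertices, and a set of size $m/51$ can easily avoid containing any two consecutive path-vertices (e.g.\ if the neighbours alternate with non-neighbours). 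Having ``many neighbours while only few already-inserted vertices'' does not create a slot — what matters is whether two of $x$'s neighbours happen to be consecutive on $P$, and nothing in the hypotheses forces that. This kind of insertion is valid under a Dirac-type degree ($>m/2$, so pigeonhole gives a consecutive pair), but not at degree $m/51$.

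The paper sidesteps exactly this by reversing the order: it first surrounds each low-degree vertex $x$ with a ``cherry'' $y_1xy_2$ whose endpoints lie in the high-degree part (possible since $x$ has $\ge n/100$ neighbours and the total low-degree set has size $<2\cdot10^{-3}n$), greedily merges these cherries into two paths $P_1,P_2$ from $a_1,a_2$ to high-degree endpoints $a_1',a_2'$ using common neighbours of high-degree vertices, and only then invokes Hamilton-connectivity (\Cref{lem:hamiltonconnected}) on the remaining high-degree induced subgraph to connect $a_1'$ to $a_2'$. The key point is that the delicate joins always occur between \emph{high}-degree vertices, where common-neighbour counts are ample; the low-degree vertices are never the ones that need a consecutive pair found for them. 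To repair your argument you would essentially have to adopt this cherry-first strategy, so the distinction is not cosmetic.

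A smaller point: you claim $|X|\le m/12$ from $\bar e(H)\le m^2/24$; the correct count gives each $x\in X$ at least $2m/3-1$ non-neighbours, so $|X|\lesssim m/8$ at that threshold. This does not affect the outcome (the paper's sharper $10^{-4}n^2$ bound on non-edges yields $|L_A|<2\cdot10^{-3}n$ anyway), but the arithmetic as written is off.
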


The proof of \Cref{lem:twocliqueshamilton} 
 uses the following folklore result on Hamilton-connectivity (we include the short proof for completeness).

\begin{lemma}\label{lem:hamiltonconnected}
 Let $G$ be a graph on $n$ vertices with minimum degree $\delta(G)\geq n/2 + 1$.
 Then $G$ is Hamilton-connected, meaning for any distinct $a,b\in V(G)$, there exists a Hamilton path from $a$ to $b$.
\end{lemma}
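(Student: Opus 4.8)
The statement to prove is the folklore fact that a graph $G$ on $n$ vertices with $\delta(G) \ge n/2 + 1$ is Hamilton-connected. The plan is to reduce this to the classical Dirac theorem (Theorem of Dirac \cite{dirac1952}, quoted in the introduction) by a standard auxiliary-graph construction. Fix distinct $a,b \in V(G)$; we want a Hamilton path from $a$ to $b$. First I would form a new graph $G'$ on $n+1$ vertices by adding a single new vertex $x$ adjacent to exactly $a$ and $b$. A Hamilton path from $a$ to $b$ in $G$ corresponds exactly to a Hamilton cycle in $G'$ (the cycle must use both edges $xa$ and $xb$, since these are the only edges at $x$, and deleting $x$ leaves a Hamilton path of $G$ with endpoints $a,b$). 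So it suffices to show $G'$ is Hamiltonian.

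Next I would check the degree condition in $G'$. The graph $G'$ has $n+1$ vertices. The new vertex $x$ has degree $2$, which is far too small for a direct application of Dirac's theorem, so I instead apply Dirac's theorem to $G' - $ nothing but rather handle $x$ separately, or — cleaner — use the following variant: if $G'$ is not Hamiltonian, take a longest path $P$ in $G'$; by a standard rotation/Pósa-type argument this longest path has endpoints $u,v$ with $N(u) \cup N(v) \subseteq V(P)$, and in fact (since $P$ is longest and cannot be extended or closed into a longer path) one derives $|N(u)| + |N(v)| \le |V(P)| - 1 \le n$. But every original vertex of $G$ has degree $\ge n/2+1$ in $G'$, and at most one endpoint of $P$ can be the low-degree vertex $x$: if both $u,v \in V(G)$ then $|N(u)|+|N(v)| \ge n+2 > n$, a contradiction; and if, say, $u = x$, then $v \in V(G)$ so $|N(v)| \ge n/2+1$, while $P$ being longest with $x$ as an endpoint forces $x$'s two neighbours $a,b$ to lie on $P$, and one argues the path can be rerouted to a cycle through all of $V(P)$, again contradicting non-Hamiltonicity. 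The cleanest packaging is: apply Dirac's theorem directly to the graph $G$ itself is not enough, so instead observe $\delta(G) \ge n/2 + 1 \ge (n+1)/2$ when $n$ is... no — I would simply cite that $\delta(G) \ge n/2+1$ implies $G$ is Hamiltonian (Dirac), pick a Hamilton cycle, and then use the $+1$ slack: for the Hamilton path, add the vertex $x$ as above and note that in $G' - x = G$ we have $\delta \ge n/2 + 1$, so by the Chvátal–Erdős-style or Ore-closure argument the pair $a,b$ is joined by a Hamilton path.

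The genuinely clean route, which I would write out, is the Ore-closure argument: in $G$, the closure $\mathrm{cl}(G)$ is obtained by repeatedly joining non-adjacent pairs whose degree sum is at least $n+1$; since $\delta(G) \ge n/2+1$ every non-adjacent pair has degree sum $\ge n+2 > n+1$, so $\mathrm{cl}(G) = K_n$, and it is a standard fact (the same closure lemma, applied to Hamilton-connectivity with threshold $n+1$ rather than $n$) that $G$ is Hamilton-connected iff $\mathrm{cl}(G)$ is; as $K_n$ is trivially Hamilton-connected (for $n \ge 3$), we are done, the case $n \le 2$ being vacuous or trivial. The main (and only) obstacle is bookkeeping: making sure the closure threshold $n+1$ is the right one for the Hamilton-path (rather than Hamilton-cycle) version, which is exactly why the hypothesis is $\delta \ge n/2 + 1$ and not merely $\delta \ge n/2$; I would state this closure fact and give the one-line longest-path proof of it rather than citing a textbook, to keep the paper self-contained as promised.
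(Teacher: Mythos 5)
Your final, committed approach (the Ore/Bondy--Chv\'atal closure route) is correct but genuinely different from the paper's. The paper's proof is more direct: it deletes $a$ and $b$, observes that the remaining graph $G'$ on $n-2$ vertices has $\delta(G') \ge n/2+1-2 = (n-2)/2$, applies Dirac's theorem to get a Hamilton cycle $C$ in $G'$, and then notes that $a$ and $b$ each have at least $n/2$ neighbours among the $n-2$ vertices of $C$; a pigeonhole count over the $n-2$ edges of $C$ (since $n/2 + n/2 > n-2$) produces consecutive $x,y$ on $C$ with $ax, by \in E(G)$, and inserting $a,b$ into $C$ at that edge yields the desired Hamilton $a$--$b$ path. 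Your closure argument instead notes that $\delta(G)\ge n/2+1$ forces every non-adjacent pair to have degree sum $\ge n+2 \ge n+1$, so the Hamilton-connectivity closure (with threshold $n+1$) is $K_n$, and then invokes the closure lemma for Hamilton-connectivity. Both are standard folklore arguments; the paper's has the advantage that it reuses only Dirac's theorem, which is already the central classical tool of the paper, whereas yours additionally requires stating and proving the Bondy--Chv\'atal closure lemma for Hamilton-connectivity with threshold $n+1$ --- a step you correctly identify but defer rather than carry out. You should also tighten the write-up: the first two thirds of your proposal contain two false starts (the auxiliary vertex $x$ of degree $2$, and a half-sketched rotation argument) that are abandoned mid-sentence; only the final paragraph is the proof you actually intend to give.
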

\begin{proof}
Consider any $a,b\in V(G)$ and obtain $G'$ from $G$ by deleting $a$ and $b$.
Then $|V(G')|=n-2$ and $\delta(G')\geq |V(G')|/2$, so by Dirac's theorem $G'$ has a Hamiltonian cycle $C$.
As $a$ and $b$ each have at least $n/2$ neighbours in $C$,
by the pigeonhole principle we can find $x,y$ adjacent on $C$
such that $ax$ and $by$ are edges,
thus extending $C$ to a Hamilton path  from $a$ to $b$.
\end{proof}

\begin{proof}[Proof of \Cref{lem:twocliqueshamilton}] 
Fix disjoint edges $a_1 b_1$ and $a_2 b_2$ with $a_1,a_2 \in A$ and $b_1,b_2 \in B$.
It suffices to find a Hamilton path from $a_1$ to $a_2$ in $G[A]$;
then by symmetry we also have such a path from $b_1$ to $b_2$ in $G[B]$, so $G$ is Hamiltonian.
To do so, we consider the set $L_A:=\{v\in V(G[A])\colon d(v,A)\leq  0.3n\}$ of low degree vertices in $A$,
find two short paths covering $L_A$, then connect them using \Cref{lem:hamiltonconnected}.
 
Counting non-edges in $A$ incident to $L_A$,
we have $\tfrac12 |L_A| (|A|-0.3n) \le 10^{-4} n^2$, so $|L_A| < 2 \cdot 10^{-3} n$.
As  $G[A]$ has minimum degree at least $n/100$, we can greedily choose a `cherry matching' 
where for each $x \in L_A \sm \{a_1,a_2\}$ we choose a path of length $2$ centred at $x$,
with all such paths being vertex-disjoint. If $a_1$ or $a_2$ is in $L_A$ we also choose a disjoint edge
connecting them to a vertex not in $L_A$. 

By definition of $L_A$, any two vertices of $A \sm L_A$
have at least $n/20$ common neighbours in $A \sm L_A$, so we can greedily join all paths chosen so far
to form two vertex disjoint paths $P_1$ from $a_1$ to some $a'_1 \in A \sm L_A$ and
$P_2$ from $a_2$ to some $a'_2 \in A \sm L_A$ that cover $L_A$ 
and have total length at most $n/100$.

The induced graph on $A \sm (V(P_1) \cup V(P_2))\cup \{a_1',a_2'\}$ has minimum degree at least $0.29n$,
so by \Cref{lem:hamiltonconnected} has a Hamiltonian path $P$ from $a'_1$ to $a'_2$.
Then $P_1 P P_2$ is a Hamiltonian path from $a_1$ to $a_2$, as required.
\end{proof}

The required two disjoint edges in $G[A,B]$ in \Cref{lem:twocliqueshamilton} 
will be provided by the following lemma, for which regularity is essential. Given that \Cref{lem:twocliqueshamilton} requires just two edges, a much weaker bound here would suffice, but we include the following nearly optimal bound as it serves as a good warm-up for the upcoming section of the paper.

\begin{lemma} \label{lem:xmatch}
Let $G$ be an $(n+1)$-regular graph on $2n$ vertices
 and $(A,B)$ be a partition of $V(G)$ with $|A|,|B|> \sqrt{n}/100$.
Then $G[A,B]$ has a matching of size $\sqrt {n}/100$.
\end{lemma}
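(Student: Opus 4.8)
The plan is to use König's theorem: if $G[A,B]$ has no matching of size $m := \lceil\sqrt{n}/100\rceil$, then there is a vertex cover $C = C_A \cup C_B$ with $C_A \subseteq A$, $C_B \subseteq B$, and $|C_A| + |C_B| \le m-1$. Every edge of $G[A,B]$ meets $C$, so the bipartite graph between $A \setminus C_A$ and $B \setminus C_B$ is empty. The strategy is then to derive a contradiction from regularity by double-counting edges leaving $A \setminus C_A$ (or equivalently, showing such a set is too large and too sparsely connected to the rest of the graph to be consistent with $(n+1)$-regularity).

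First I would set $A' := A \setminus C_A$ and $B' := B \setminus C_B$, and note $|A'| \ge |A| - (m-1) > \sqrt{n}/100 - \sqrt{n}/100 \ge 0$; more usefully, since $|A| + |B| = 2n$ and $|C_A| + |C_B| < \sqrt{n}/100$, at least one of $|A'|, |B'|$ is at least $n - \sqrt{n}/100$; say WLOG it is $|A'|$ (the roles of $A$ and $B$ are symmetric). Since there are no edges between $A'$ and $B'$, every neighbour of a vertex $v \in A'$ lies in $A \cup C_B$, so $d_G(v) \le (|A|-1) + |C_B|$. But $d_G(v) = n+1$ for all $v$, forcing $|A| \ge n + 2 - |C_B| \ge n + 2 - (m-1)$, hence $|A| > n + 1$ for large $n$ — wait, this only gives $|A|$ large, not a contradiction by itself, so I would instead count more carefully: every edge incident to $A'$ goes to $A \setminus A' = C_A$ or stays in... no, $A'$ has no edges to $B'$ but may have edges within $A$ and to $C_B$. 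The clean contradiction comes from counting edges between $A'$ and its complement $V(G) \setminus A' = C_A \cup B$. Each vertex of $B'$ sends all $n+1$ of its edges into $A \cup C_B$ minus... let me reconsider: a vertex $w \in B'$ has no neighbours in $A'$, so all $n+1$ neighbours of $w$ lie in $B \cup C_A$, giving $|B| + |C_A| - 1 \ge n+1$, i.e. $|B| \ge n + 2 - |C_A|$. Combined with the analogous bound $|A| \ge n + 2 - |C_B|$ (if $A'$ is also nonempty, which holds since $|A'| > \sqrt n/100 - \sqrt n /100$... I need $|A'| \ge 1$, true as long as $C_A \subsetneq A$, and if $C_A = A$ then $|C_A| = |A| > \sqrt n /100 > m-1$, contradiction). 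So both $A'$ and $B'$ are nonempty, and adding the two bounds yields $2n = |A| + |B| \ge 2n + 4 - (|C_A| + |C_B|) \ge 2n + 4 - (m-1) = 2n + 5 - m$, which is a contradiction once $m < 5$ — but $m \approx \sqrt n/100$ is large, so actually $2n \ge 2n + 5 - m$ is *not* a contradiction. I have the inequality backwards in utility; the correct extraction is that a vertex in $B'$ has $n+1$ neighbours all inside $B \cup C_A$, and $|B \cup C_A| \le |B| + |C_A|$, but also $B'$ itself sits inside $B$, so these neighbours lie in $(B \setminus \{w\}) \cup C_A$; this is automatically satisfiable. The real leverage must come from counting all edges at once across the cut.

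So the main step, and the main obstacle, is the following counting: since there are no $A'$–$B'$ edges, $e(A', B) = e(A', C_B) \le |A'|\,|C_B| \le n|C_B|$. On the other hand, $e(A', B) = \sum_{v \in A'} d_G(v, B) = \sum_{v \in A'}(n+1 - d_G(v,A)) \ge |A'|(n+1) - \sum_{v\in A'} d_G(v,A) \ge |A'|(n+1) - |A'|(|A|-1)$, and since $|A| = 2n - |B|$ with $|B| \ge |C_B|$... I would instead use $|A| \le 2n$ is too weak; the point is $|A'| \le |A|$ and $|B'| \le |B|$ with $|A'| + |B'| \ge 2n - (m-1)$. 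Each vertex $v \in A'$ has all $n+1$ neighbours in $V(G) \setminus B' = A \cup C_B$, so $n + 1 \le |A| - 1 + |C_B|$, giving $|A| \ge n + 2 - |C_B| \ge n + 2 - m$. Symmetrically $|B| \ge n + 2 - m$. Since $|A| + |B| = 2n$, we get $|A|, |B| \le n + m - 2$, so both parts have size $(1 \pm o(1))n$, and $|C_B| \ge |A| - n + 2 - \dots$ — in fact from $n+1 \le |A| - 1 + |C_B|$ and $|A| \le n + m - 2$ we get $|C_B| \ge 2 - (m-2) $, vacuous. The genuinely correct argument: consider $A'$ with $|A'| \ge |A| - m$; every vertex of $A'$ has $\le |C_B|$ neighbours in $B$, so $e(A', B) \le |A'||C_B| \le n m$; but every vertex $w \in B$ has $\bar d_G(w, A') = |A'| - d_G(w,A') $, and summing regularity over $B$: $e(A,B) = \sum_{w\in B}(n+1-d_G(w,B)) \ge |B|(n+1) - \sum_{w \in B} d_G(w,B) = |B|(n+1) - 2e(B)$. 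I expect the clean finish is to compare $e(A,B) = \sum_{v\in A}(n+1 - d_G(v,A)) \ge |A|(n+1) - (|A|^2 - |A|) = |A|(n + 2 - |A|)$, and with $|A| = n + \ell$ where $|\ell| \le m$ this is $\ge (n+\ell)(2 - \ell) \ge$ something negative — again weak because $A$ could be near-complete. The resolution is that $e(A,B) \le e(A', B) + |C_A| \cdot |B| \le nm + mn = 2nm = O(n^{3/2})$, while regularity forces $e(A, B) = \sum_{w \in B} d_G(w,A) = \sum_{w\in B}(n + 1 - d_G(w, B)) \ge |B|(n+1) - |B|(|B|-1) = |B|(n + 2 - |B|) = |B|(n + 2 - (2n - |A|))$, and since $|A| \le n + m$, $|B| \ge n - m$, giving $e(A,B) \ge (n-m)(n + 2 - (2n - (n+m))) $... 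I will sort the algebra in the writeup; the essential mechanism is that no matching of size $m$ forces, via König, a set $A'$ of size $\approx n$ with almost no edges to the other side ($O(nm)$ of them), contradicting that $(n+1)$-regularity demands $\Omega(n^2)$ edges leaving any set of size $\le n+1$ — concretely, a vertex set of size $\le n + m$ in an $(n+1)$-regular graph on $2n$ vertices has average external degree $\ge n + 1 - (n + m - 1) = 2 - m$, which is too weak, so the correct bound uses $\bar e$: $\bar e(A) \le \binom{|A|}{2}$ hence $e(A, \overline A) = |A|(n+1) - 2e(A) \ge |A|(n+1) - |A|(|A|-1) = |A|(n + 2 - |A|)$, and this is genuinely $\Omega(n^{3/2})$ only if $|A| \le n + 2 - \omega(\sqrt n)$ — which need not hold. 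I therefore anticipate the actual proof balances the two parts: if $|A| \ge n + \sqrt n$ then $|B| \le n - \sqrt n$ and one runs the argument with $B$ in the role of the large external-neighbourhood side, so WLOG $||A| - n| \le \sqrt n$, and then $|A|(n+2-|A|) \ge (n - \sqrt n) \cdot 2 \ge \sqrt n$ — still too weak. Given the difficulty, I expect the real argument avoids König and instead directly applies the defect Hall / counting argument to the regular bipartite-ish structure, bounding the size of a maximum matching from below by $\min(|A|,|B|) - $ (number of vertices on the smaller side with few neighbours on the other side), which by regularity and $|A|, |B| > \sqrt n / 100$ is at least $\sqrt n / 100$; the main obstacle is controlling vertices of $A$ whose neighbourhoods lie almost entirely in $A$, which can exist but are limited in number because each contributes $\Theta(|A|)$ non-edges within $A$ and $\bar e(A) \le \binom{|A|}{2}$, bounding their count — this is exactly the trade-off I would make precise to close the proof.
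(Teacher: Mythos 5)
Your general strategy (K\"onig's theorem, then derive a contradiction from regularity) is the same one the paper takes, and you correctly identify that the crux is exploiting the near-empty bipartite graph between $A\setminus C_A$ and $B\setminus C_B$. However, every concrete counting argument you attempt is symmetric in the two sides of the cut, and you repeatedly (and correctly) observe that such counts land around $e(A,B)\approx nm$ from both above and below, giving no contradiction. You end by conjecturing the real proof ``avoids K\"onig'' and sketching a vague defect-Hall plan that you explicitly leave open; moreover, that sketch contains a logical slip: a vertex of $A$ whose neighbourhood lies almost entirely in $A$ contributes very \emph{few} non-edges within $A$ (its $A$-degree is near $|A|$), not $\Theta(|A|)$ many, so the trade-off you propose does not bound the number of such vertices.

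The missing idea in the paper's proof is an \emph{asymmetry}, not better symmetric counting. After first adjusting to a balanced cut $(\wt A,\wt B)$ with $|\wt A|=|\wt B|=n$ (by moving at most $|M|$ vertices, which changes matching sizes by a bounded amount) and taking a minimal vertex cover $C=A'\cup B'$ of $G[\wt A,\wt B]$ with $|C|\le\sqrt n/2$, the paper does \emph{not} try to show $e(\wt A,\wt B)$ is simultaneously large and small. Instead it orders the two quantities $e(A',\wt B\setminus B')$ and $e(B',\wt A\setminus A')$, assumes WLOG $t:=e(A',\wt B\setminus B')\ge e(B',\wt A\setminus A')$, and then uses regularity and the bound $|A'\cup B'|\le\sqrt n/2$ to push a lower bound on $\bar e(A',\wt A\setminus A')$ in terms of $t$, which in turn yields $e(\wt A\setminus A',B')>t$---strictly more than the quantity it was assumed to be at most equal to. That strict reversal is the contradiction, and it is precisely the step your proposal never reaches; the symmetric estimates you keep trying are genuinely too weak here (the worst case is balanced and near-tight, as you noticed), so without the maximality/asymmetry device the argument does not close.
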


\begin{proof}

Let $M$ be a maximal matching in $G[A,B]$.
Suppose for contradiction that $|M| < \sqrt{n}/100$. 
Assume without loss of generality that $|A|\geq |B|$ and note that $|A|\leq n+\sqrt{n}/100$ as otherwise all vertices in $B$ have degree at least $\sqrt{n}/100$ in $A$, so a matching of size $\sqrt{n}/100$ could be found greedily.

Let $(\wt{A},\wt{B})$ be a balanced cut obtained from $(A,B)$
by moving a set $S$ of at most $\sqrt{n}/100$ vertices.
Let $C$ be a minimal vertex cover in the bipartite graph $G[\wt{A},\wt{B}]$. If $|C|>\sqrt{n}/2$, by K\"onig's theorem this implies the existence of a matching of the same size, and thus we would have a matching of size $\sqrt{n}/2-\sqrt{n}/100$ between $A$ and $B$. Towards a contradiction, suppose that $|C|\le\sqrt{n}/2$. To simplify notation, we write $A, B$ in place of $\tilde{A}, \tilde{B}$ for the remainder of the proof.

Let $A'=A\cap C$ and $B'=B\cap C$. Assume without loss of generality that $e(A',B\setminus B')=t\geq e(B',A\setminus A')$. Since $G$ is $(n+1)$-regular, we have that the complement of $G$ is $(n-1)$-regular, so the number of non-edges $$\overline{e}(A',A\setminus A')\geq |A'|(n-1)-\overline{e}(A',A'\cup B')-\overline{e}(A',B\setminus B')\geq (n-1)|A'|-n/4-(|A'|n-t)> t-n/3,$$ since by assumption $|A'\cup B'|\leq \sqrt{n}/2$. This implies $e(A',A\setminus A')< |A'||A\setminus A'|-t+n/3$, and hence again by regularity and recalling that $e(A\setminus A',B\setminus B')=0$ (as $C$ is a minimal vertex-cover of the bipartite graph) we have
\begin{align*}
    e(A\setminus A', B')&\geq
|A\setminus A'|(n+1)-e(A\setminus A',A\setminus A')-e(A\setminus A',A')\\
&> |A\setminus A'|(n+1)-(|A\setminus A'|-1)|A\setminus A'|-\big(|A'||A\setminus A'|-t+n/3)\\
&\geq 2|A\setminus A'|+t-n/3> t.
\end{align*}
which contradicts the assumption on the maximality of $e(A',B\setminus B')$, and completes the proof.    
\end{proof}


We conclude this subsection by applying  \Cref{lem:twocliqueshamilton} to prove \Cref{lem:near2c}.

\begin{proof}[Proof of \Cref{lem:near2c}]
Let $n^{-1} \ll \eps \ll 1$ and $G$ be an $(n+1)$-regular graph on $2n$ vertices. 
Suppose there is some $A \sub V(G)$ with $n \le|A|\le(1+32\eps)n$
such that $e(A,\ov{A}) \le 24\eps n^{2}$ 
and $G[A]$, $G[\ov{A}]$ both have minimum degree at least $2n/5$.
We note that the number of non-edges in $G[A]$ is at most
$\tbinom{|A|}{2} - (|A|(n+1)-24 \eps n^2)/2 < 40 \eps n^2 < 10^{-4} n^2$ 
for sufficiently small $\eps$, and similarly for $G[\ov{A}]$.
 
Consider $\Gh = G[S]$ where $S$ is a random subset of $V(G)$.
By Chernoff bounds, whp $|S| = n \pm n^{0.6}$, 
$|S \cap A| = |A|/2 \pm n^{0.6}$, $|S \cap \ov{A}| = |\ov{A}|/2 \pm n^{0.6}$,
and $G[S \cap A]$, $G[S \cap \ov{A}]$ both have minimum degree at least $0.1|S|$.
By \Cref{lem:xmatch}, there is a matching of size $0.1n$ in $G[A,\ov{A}]$,
so whp there is a matching size $2$ in $G[S \cap A,S \cap \ov{A}]$  by Chernoff bounds.
Finally, bounding the number of non-edges in $G[S \cap A]$, $G[S \cap \ov{A}]$ by those in $G[A]$, $G[\ov{A}]$,
we can apply  \Cref{lem:twocliqueshamilton} to conclude that $G[S]$ is Hamiltonian.
\end{proof}


\subsection{Almost bipartite}

In this subsection we prove \Cref{lem:nearbip},
which establishes \Cref{thm:main-} for graphs that are almost bipartite.
We use the following sufficient condition for Hamiltonicity in such graphs.
For the statement, we require the following definitions, which will be important throughout the paper.
A \emph{linear forest} is a collection of vertex-disjoint paths.
We say that a  cut $(X,Y)$ of a graph $H$ is \emph{$k$-good} if $|X| \ge |Y|$ 
and $H[X]$ has a linear forest with at least $k+|X|-|Y|$ edges,
or if $|Y| \ge |X|$ and $H[Y]$ has a linear forest with at least $k+|Y|-|X|$ edges;
we call a cut  \emph{good} if it is $0$-good.

\begin{lemma}\label{lem:bipartitehamilton} 
Let $n^{-1} \ll \eps \ll \gG$ and $G$ be a graph on $n$ vertices in which all vertex degrees are $n/2 \pm n^{0.6}$.
Suppose $(A,B)$ is a good cut of $G$ such that $|B| \leq |A|\leq |B|+\eps n$
and $G[A,B]$ has at least $(1/4-\eps)n^2$ edges and minimum degree at least $\gamma n/3$.
Then $G$ has a Hamilton cycle.
\end{lemma}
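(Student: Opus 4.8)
The plan is to build the Hamilton cycle of $G$ by first isolating, inside the larger side $A$, a short linear forest $F$ on $2(|A|-|B|)$ vertices whose existence is guaranteed by goodness of the cut, then absorb $F$ into a cycle that alternates between $A$ and $B$. Concretely, write $d=|A|-|B|\le \eps n$. Goodness gives a linear forest in $G[A]$ with at least $d$ edges; by discarding edges we may take it to consist of exactly $d$ vertex-disjoint edges $a_1a_1',\dots,a_da_d'$ in $A$ (if the forest has longer paths we just keep $d$ edges from it, possibly from different components, after trimming to make them disjoint — here $d\le\eps n$ is tiny so no parity issue arises). Contract each such edge $a_ia_i'$ to a single new vertex $v_i$; let $A^\ast$ be $(A\setminus\bigcup\{a_i,a_i'\})\cup\{v_1,\dots,v_d\}$, so $|A^\ast|=|A|-d=|B|$, and keep $B$ as is. The bipartite graph $H=G[A^\ast,B]$ (where $v_i$ is joined to $N_G(a_i)\cup N_G(a_i')$ in $B$) is balanced with parts of size $m:=|B|$. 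A Hamilton cycle of $H$ lifts to a Hamilton cycle of $G$: each time the cycle passes through $v_i$, replace $v_i$ by the edge $a_ia_i'$, using one $H$-edge at $v_i$ to reach $a_i$ and the other to leave from $a_i'$. So it suffices to show $H$ is Hamiltonian.

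For this I would verify that $H$ satisfies the bipartite analogue of Dirac's theorem (Moon–Moser): a balanced bipartite graph on parts of size $m$ with minimum degree $>m/2$ is Hamiltonian. The degree of an unmodified vertex $a\in A^\ast$ into $B$ is $d_G(a,B)$; since $d_G(a)=n/2\pm n^{0.6}$ and $a$ has at most $|A|-1\le n/2+\eps n$ neighbours inside $A$, we get $d_G(a,B)\ge n/2-n^{0.6}-(n/2-|B|)\ge m - \eps n - n^{0.6}$... this only gives $\Omega(\eps n)$ from below, which is not enough. The fix is to use the edge count: $e(H)=e(G[A,B])-(\text{edges from }B\text{ to the }2d\text{ contracted vertices, double-counted at the }v_i)\ge (1/4-\eps)n^2 - 2d\cdot|B| \ge (1/4-3\eps)n^2$, while $e(K_{m,m})\le (1/2+\eps)^2n^2/... $ — compare: $m^2=|B|^2\ge (n/2-\eps n)^2\ge (1/4-\eps)n^2$, so $H$ is missing at most $O(\eps n^2)$ edges from complete bipartite. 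Combined with minimum degree at least $\gamma n/3 - 2d \ge \gamma n/4$ on every vertex (the contracted vertices only gain edges), a standard argument handles it: if $H$ is not Hamiltonian, take a longest path/cycle and run the rotation–extension (Pósa) argument on the bipartite graph; a missing Hamilton cycle forces either a vertex of $B$ with $<m/2$ neighbours in $A^\ast$ or a small set $X\subseteq A^\ast$ with $|N_H(X)|<|X|$, either of which contradicts the $O(\eps n^2)$ edge deficiency together with $\delta(H)\ge\gamma n/4$.

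The main obstacle is the last step: getting Hamiltonicity of the balanced bipartite graph $H$ from "almost complete bipartite plus minimum degree $\gamma n/4$" rather than from a clean minimum-degree-$>m/2$ hypothesis. I expect to run the bipartite Pósa rotation argument directly: assume $P=x_1\dots x_{2\ell}$ is a longest path, use rotations fixing the endpoint $x_1\in A^\ast$ to produce a set $R$ of $\Omega(\gamma n)$ possible other endpoints, alternately in $A^\ast$ and $B$; then either $R$ together with $N_H(R)$ and the non-edges of $H$ yields an edge from an endpoint back into $P$ creating a longer path (contradicting maximality) or, if $P$ is already spanning, the same set of endpoints plus the deficiency bound closes it into a Hamilton cycle. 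The bookkeeping — that fewer than $\eps n^2$ non-edges cannot block all $\Omega(\gamma n)$ rotation endpoints when $\eps\ll\gamma$ — is routine but is where all the hypotheses get used, so I would write that out carefully; everything before it (the contraction, the lift, the edge-count bounds) is straightforward.
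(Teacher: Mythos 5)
Your contraction step has a genuine gap. A linear forest in $G[A]$ with at least $d:=|A|-|B|$ edges does \emph{not} in general contain $d$ pairwise vertex-disjoint edges: if the forest is a single path of length $d$, then after ``trimming to make them disjoint'' you get a matching of size only $\lceil d/2\rceil$. This is not a parity issue but a factor-of-two loss, so contracting the edges you can actually extract gives $|A^*|=|A|-\lceil d/2\rceil>|B|$ and the resulting bipartite graph $H$ is not balanced, which defeats the whole reduction. You cannot repair this by contracting an entire path $p_1\cdots p_{k+1}$ of $F$ to a single vertex $v$: although that does restore $|A^*|=|B|$, a Hamilton cycle of $H$ may enter and leave $v$ through two neighbours of the \emph{same} endpoint (say both in $N(p_1)\setminus N(p_{k+1})$), and then it does not lift to a Hamilton cycle of $G$. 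The hypothesis really does only give you a linear forest, and your argument tacitly upgrades it to a matching.

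The paper avoids this entirely by a different decomposition: it keeps the whole linear forest $F$ (with exactly $d$ edges) as an explicit segment, greedily attaches cherries in $G[A,B]$ around the low-degree vertices $L_A\cup L_B$ (which it first bounds by $O(\eps n)$ using the edge count), and merges everything into one short path $P'$ of length $O(\eps n)$ with one end in $A$ and one in $B$. After removing the interior of $P'$, the leftover bipartite graph has balanced parts and minimum degree $>m/2+1$, so the clean Moon--Moser-type Hamilton-connectivity lemma (\Cref{lem:hamconnectedbipartite}) finishes. Your second step — P\'osa rotation in an ``almost-complete bipartite graph with minimum degree $\gamma n/4$'' — is plausible (the non-edge count $O(\eps n^2)$ versus rotation sets of size $\Omega(\gamma n)$ does give the needed edge between endpoint sets since $\eps\ll\gamma$), but you leave exactly the hard bookkeeping unwritten: that a longest path is spanning, how low-degree vertices interact with the rotation, and the bipartite version of the rotation lemma. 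The paper's pre-absorption of $L_A\cup L_B$ into $P'$ is precisely what lets it skip this and invoke a one-line Dirac-type statement instead. So even setting aside the matching gap, you would still have most of the work ahead of you on the Hamiltonicity of $H$.
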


The proof of \Cref{lem:bipartitehamilton} uses the following bipartite analogue of \Cref{lem:hamiltonconnected}
on Hamilton-connectivity (again we include a short proof for completeness).

\begin{lemma}\label{lem:hamconnectedbipartite}
Let $G=(A,B)$ be a bipartite graph with $|A|=|B|=n/2$ and $\delta(G)\geq n/4+1$. 
Then for any $a\in A$ and $b\in B$ there is a Hamilton path from $a$ to $b$.
\end{lemma}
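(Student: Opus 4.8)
The plan is to mirror the proof of \Cref{lem:hamiltonconnected} in the bipartite setting: delete the two prescribed endpoints, apply the bipartite analogue of Dirac's theorem to the remainder, and then splice the endpoints back onto a Hamilton cycle.

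First, set $G' = G - \{a,b\}$, a balanced bipartite graph with parts $A' = A \sm \{a\}$ and $B' = B \sm \{b\}$, each of size $m := n/2 - 1$. A vertex of $A'$ can lose only the neighbour $b$ when passing to $G'$, and a vertex of $B'$ only the neighbour $a$, so $\dD(G') \ge \dD(G) - 1 \ge \lceil n/4 \rceil = \lfloor m/2 \rfloor + 1$; this is exactly where the ``$+1$'' in the hypothesis is used. By the classical bipartite version of Dirac's theorem (Moon and Moser), a balanced bipartite graph on $2m$ vertices with minimum degree at least $\lfloor m/2\rfloor + 1$ has a Hamilton cycle, so $G'$ has a Hamilton cycle $C$. (This needs $m \ge 2$; for the only smaller case, $n = 4$, the hypothesis forces $G = K_{2,2}$ and the claim is immediate.)

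Next, fix a cyclic orientation of $C$. Since $C$ alternates between $A'$ and $B'$, sending each vertex to its successor along $C$ is a bijection $B' \to A'$. Let $Y \sub A'$ be the image of $N_G(a) \cap B'$ under this map, so $|Y| = |N_G(a) \cap B'| \ge \dD(G) - 1 \ge n/4$, where we lose at most one neighbour because $b$ may be adjacent to $a$. Likewise $|N_G(b) \cap A'| \ge n/4$. Then $|Y| + |N_G(b) \cap A'| \ge n/2 > n/2 - 1 = |A'|$, so $Y \cap N_G(b) \ne \es$; pick $y$ in this intersection and let $x \in N_G(a) \cap B'$ be the vertex whose $C$-successor is $y$, so that $xy \in E(C)$.

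Finally, deleting the edge $xy$ from $C$ leaves a Hamilton path of $G'$ from $x$ to $y$; prepending $a$ via the edge $ax$ and appending $b$ via the edge $yb$ gives a Hamilton path of $G$ from $a$ to $b$. There is no real obstacle here; the only points needing care are the degree bookkeeping (the losses of at most one neighbour on deleting $a$, $b$, and the possible edge $ab$) and the observation that the inequality $\tfrac n4 + \tfrac n4 > \tfrac n2 - 1$ leaves exactly enough room to force $Y$ and $N_G(b)$ to meet — both of which are tight, which is precisely why the hypothesis requires $\dD(G) \ge n/4 + 1$.
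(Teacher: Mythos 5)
Your proof is correct and follows essentially the same route as the paper's: delete $a$ and $b$, apply the bipartite Dirac theorem (Moon--Moser, or equivalently Chv\'atal's Corollary 1.4) to the remaining balanced bipartite graph to get a Hamilton cycle, and then use the successor map together with a pigeonhole count to find an edge $xy$ of the cycle with $ax, by \in E(G)$. You are a bit more careful than the paper in the degree bookkeeping (accounting for the possible loss of one neighbour when restricting $N_G(a)$ and $N_G(b)$ to $B'$ and $A'$) and in checking the small base case, but the underlying argument is the same.
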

\begin{proof}
Consider any $a,b\in V(G)$ and obtain $G'$ from $G$ by deleting $a$ and $b$.
Then $G'$ has a Hamiltonian cycle $C$ by \cite[Corollary 1.4]{chvatal1972hamilton}.
Fix an order of $C$ and for each $v \in V(G)$ let $v^+$ denote its successor on $C$.
Then $N_G(b)$ and $\{v^+: v \in N_G(a)\}$ must intersect,
as they are both subsets of $A \sm \{a\}$ of size at least $n/4 + 1$,
so we can extend $C$ to a Hamiltonian path  from $a$ to $b$.
\end{proof}

\begin{proof}[Proof of \Cref{lem:bipartitehamilton}]
Similarly to the proof of \Cref{lem:twocliqueshamilton}, 
our plan will be to find a path $P'$ from some $a' \in A$ to some $b' \in B$
so that $G' = G \sm (V(P') \sm \{a',b'\})$ satisfies the conditions of \Cref{lem:hamconnectedbipartite}.
Then, a Hamilton path from $a'$ to $b'$ will complete $P'$ to the required Hamiltonian cycle.
The path $P'$ will thus need to cover the low degree vertices and balance the part sizes
using the given linear forest in $A$.

We denote the sets of low degree vertices in each part by
 $L_A:=\{v\in A\colon d(v,B)\leq 0.3n\}$ and $L_B:=\{v\in B\colon d(v,A)\leq 0.3n\}$.
To bound $L_A$, we note that $|L_A|(n/2-n^{0.6}-0.3n) \le 2e(A) \le (n/2+n^{0.6})|A|-e(A,B) 
\le (n/2+n^{0.6})(n/2+\eps n)-(1/4-\eps)n^2 < 3\eps n^2$, so $|L_A| < 20\eps n$;
similarly, $|L_B| < 20\eps n$.

Since $|A| \ge |B|$ we can  fix a linear forest $F$
with exactly $|A|-|B| < \eps n$ edges, which exists as $(A,B)$ is a good cut.
Next, as $G[A,B]$ has minimum degree at least $\gG n/3$,
we can greedily choose a cherry matching, where for each $x \in (L_A \cup L_B) \sm V(F)$
we choose a path of length $2$ in $G[A,B]$ centred at $x$, 
with all such paths vertex-disjoint from each other and from $F$.
We can also choose a disjoint edge of $G[A,B]$ for each vertex in $ (L_A \cup L_B) \cap V(F)$
connecting it to a vertex not in $L_A \cup L_B \cup V(F)$.

To complete the construction of the path $P'$, we greedily merge paths,
where in each step we fix two endpoints $x,y$ of two distinct existing paths
and join them by a path of length $2$ or $3$ in $G[A,B]$, vertex-disjoint from the existing paths.
To see that this is possible, note that if $x,y$ are in the same part then they have 
at least $0.3n + 0.3n - (n/2 + \eps n) > n/20$ common neighbours in the other part,
so we can choose the required path of length $2$, or if $x \in A$, $y \in B$ then we can
connect $x$ to some new $x' \in B$ then connect $x',y$ by a disjoint path of length $2$.
This process ends with one path $P'$ of length $<10^3 \eps n$; 
by possibly adding one more edge we can assume it has ends $a' \in A$ and $b' \in B$.

Finally, $G' = G \sm (V(P') \sm \{a',b'\})$ has balanced parts $(A',B')$ by choice of $F$, 
and $G'[A',B']$ has minimum degree at least $0.3n - 10^3 \eps n > |A'|/2 + 1$,
so it satisfies the conditions of \Cref{lem:hamconnectedbipartite},
which gives a Hamiltonian path in  $G'[A',B']$  from $a'$ to $b'$ 
that completes $P'$ to the required Hamiltonian cycle.
\end{proof}

The required linear forest in \Cref{lem:bipartitehamilton}
will be provided by the following lemma, which uses regularity,
and plays an essential role here and later in the paper.

We recall that a balanced cut $(A,B)$
is a partition of $V(G)$ with $|A|=|B|=n$. 

\begin{lemma}\label{lem:balancedcutbigmatching}
Let $G$ be an $(n+1)$-regular graph on $2n$ vertices and $(A,B)$ be a balanced cut.
Suppose $A',B'$ are vertex covers of $G[A],G[B]$. Then $(|A'|+1)(|B'|+1)\geq n+1$.    
\end{lemma}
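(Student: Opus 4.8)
The plan is to exploit regularity of $G$ via a double-counting argument relating the vertex covers $A', B'$ to edges across the cut $(A,B)$. Write $A_0 = A \setminus A'$ and $B_0 = B \setminus B'$. Since $A'$ is a vertex cover of $G[A]$, the set $A_0$ is independent in $G[A]$; similarly $B_0$ is independent in $G[B]$. The key point is that every vertex $v \in A_0$ has all of its $n+1$ neighbours either inside $A'$ or inside $B$ (none inside $A_0$, by independence), so $d_G(v, A') + d_G(v, B) = n+1$, hence $d_G(v, B) \ge n+1 - |A'|$. Summing over $v \in A_0$ gives $e(A_0, B) \ge |A_0|(n+1-|A'|)$. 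On the other hand, the edges from $A_0$ into $B$ all land in $B'$ together with edges into $B_0$; but a vertex $w \in B_0$ has $d_G(w, A) = n+1 - d_G(w, B') \ge n+1 - |B'|$, and all these neighbours lie in $A$, so in particular $e(A_0, B_0) \ge \sum_{w \in B_0} d_G(w, A_0) $, which we will bound from below similarly.

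Carrying this out: from $e(A_0, B) \ge |A_0|(n+1-|A'|)$ and the trivial bound $e(A_0, B) \le e(A_0, B') + e(A_0, B_0) \le |A_0||B'| + e(A_0, B_0)$, we get $e(A_0, B_0) \ge |A_0|(n+1-|A'|-|B'|)$. Symmetrically (counting from $B_0$'s side, using that $B_0$ is independent in $G[B]$), we also have $e(A_0, B_0) \ge |B_0|(n+1-|A'|-|B'|)$. Now if $|A'| + |B'| \ge n+1$ there is nothing to prove since then $(|A'|+1)(|B'|+1) > |A'|+|B'| \ge n+1$. So assume $|A'|+|B'| \le n$, and set $m = n+1-|A'|-|B'| \ge 1$. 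Then $e(A_0,B_0) \ge m\cdot\max(|A_0|,|B_0|)$. But also $e(A_0,B_0) \le |A_0||B_0|$, so $|A_0||B_0| \ge m \cdot \max(|A_0|,|B_0|)$, forcing $\min(|A_0|,|B_0|) \ge m$ (note $|A_0|,|B_0|>0$ here, else $m\le 0$). Since $|A_0| = n - |A'|$ and $|B_0| = n - |B'|$, this reads $n - |A'| \ge n+1-|A'|-|B'|$ and $n-|B'| \ge n+1-|A'|-|B'|$, i.e. $|B'| \ge 1$ and $|A'| \ge 1$ — which is too weak on its own, so the argument must be pushed a little further.

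The fix is to use the stronger inequality $e(A_0,B_0) \ge |A_0|\,m$ \emph{and} $e(A_0,B_0) \ge |B_0|\,m$ simultaneously against $e(A_0,B_0)\le|A_0||B_0|$ in a multiplicative way: from $|A_0||B_0| \ge |A_0| m$ we get $|B_0|\ge m$, and from $|A_0||B_0|\ge|B_0|m$ we get $|A_0|\ge m$. Hence $(n-|A'|)(n-|B'|) = |A_0||B_0| \ge m^2 = (n+1-|A'|-|B'|)^2$. Writing $x = |A'|, y=|B'|$, this is $(n-x)(n-y) \ge (n+1-x-y)^2$, which after expansion and rearrangement is equivalent to $(x+1)(y+1) \ge n+1 + (\text{a nonnegative square term})$; indeed one checks $(n-x)(n-y) - (n+1-x-y)^2 = (x+1)(y+1) - (n+1) - \text{something}$, and a direct expansion shows the inequality rearranges exactly to $(x+1)(y+1)\ge n+1$ (all the cross terms cancel cleanly). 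So I would expand both sides, cancel, and read off the claimed bound.

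The main obstacle I anticipate is getting the bookkeeping exactly right in the regularity count — in particular being careful that $A_0$ independent in $G[A]$ really gives $d_G(v,B) = n+1-d_G(v,A') \ge n+1-|A'|$ with no off-by-one slippage, and handling the boundary case $|A'|+|B'|\ge n+1$ (equivalently $A_0$ or $B_0$ empty) separately so that the division/"$\min \ge m$" step is valid. Once the two inequalities $|A_0||B_0|\ge m|A_0|$ and $|A_0||B_0|\ge m|B_0|$ are in hand, the final algebra is a routine expansion of $(n-x)(n-y)\ge(n+1-x-y)^2$.
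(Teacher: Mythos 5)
Your bookkeeping up to $e(A_0,B_0) \ge |A_0|\,m$ and $e(A_0,B_0) \ge |B_0|\,m$ (where $A_0 = A\setminus A'$, $B_0 = B\setminus B'$, $m = n+1-|A'|-|B'|$) is correct, and so is the step $|A_0|\ge m$, $|B_0|\ge m$. But the final algebraic claim is wrong: $(n-x)(n-y)\ge(n+1-x-y)^2$ is \emph{not} equivalent to $(x+1)(y+1)\ge n+1$, and is in fact strictly weaker. Expanding, $(n-x)(n-y)-(n+1-x-y)^2 = (n+2)(x+y)-x^2-y^2-xy-(2n+1)$, which vanishes at $x=y=1$ for every $n$, while $(x+1)(y+1)=4<n+1$ once $n\ge 4$. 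So your intermediate inequality holds with equality at $x=y=1$ yet does not imply the lemma. Indeed, unwinding $|A_0|\ge m$ and $|B_0|\ge m$ shows they reduce to the trivial statements $|B'|\ge 1$ and $|A'|\ge 1$ (when $A_0,B_0$ are nonempty), and multiplying them adds nothing further, so the whole derivation extracts only this.

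The source of the loss is that you discard the quantity $e(A_0,A')$ too early: replacing $d(v,A')$ by its crude upper bound $|A'|$ for each $v\in A_0$ forgets exactly the edge count the paper exploits. The paper's proof, which also starts from independence of $A\setminus A'$ and $B\setminus B'$, retains the internal edge terms $e(A',A\setminus A')$ and $e(B',B\setminus B')$, assumes without loss of generality $e(A\setminus A',A')\ge e(B\setminus B',B')$, and then lower-bounds $e(B\setminus B',B')$ by $(n+1)-(|A'|+1)(|B'|+1)+e(A',A\setminus A')$ via regularity and independence of $B\setminus B'$; the WLOG assumption cancels $e(A',A\setminus A')$ from both sides, giving the bound at once. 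To repair your argument you would need to keep these internal edge terms and introduce a comparison of that kind; the bounds $e(A_0,B_0)\le|A_0||B_0|$ and $e(A_0,B_0)\ge m\max(|A_0|,|B_0|)$ alone cannot reach $(|A'|+1)(|B'|+1)\ge n+1$.
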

\begin{proof}
Write $V:=V(G)$. Suppose without loss of generality that $e(A\sm A',A')\geq e(B\sm B',B')$.
Note that 
 \begin{align*}
    e(A',B\sm B')\leq e(A',B)=e(A',V)-e(A',A')-e(A',A\sm A')\leq (n+1)|A'|-e(A',A\sm A').  
 \end{align*}  
As $B \sm B'$ is an independent set, we deduce
\begin{align*}
    e(B\sm B',B')&=e(B\sm B', V)-e(B\sm B', A \sm A')-e(B\sm B', A')-e(B\sm B', B\sm B')\\
    &\geq (n+1)(n-|B'|)-(n-|A'|)(n-|B'|)-((n+1)|A'|-e(A', A\sm A'))-0\\
    &= (n+1)-(|A'|+1)(|B'|+1)+e(A',A\sm A').
\end{align*}
As $e(A\sm A',A')\geq e(B\sm B',B')$, we conclude that $(n+1)-(|A'|+1)(|B'|+1)\leq 0$.
\end{proof}

The following simple remarks will be used throughout the remainder of the paper.
\begin{remark} \label{rem:simple}
Let $M$ be a maximal matching in a graph $H$. 
Then $V(M)$ is a vertex cover in $H$. In particular, $|M|\geq |C|/2$ where $C$ is a minimum vertex cover.
Also, suppose $H$ has maximum degree $\DD$
and $C$ is a vertex cover in $H$. 
Then  $e(H) \le \DD |C|$.
\end{remark}

As the size of a random subset of a set $A$ has the binomial $B(|A|,1/2)$ distribution,
we will frequently need the following normal approximation, which follows from the Central Limit Theorem.

\begin{lemma} \label{lem:CLT}
If $X \sim B(n,1/2)$ then $\mb{P}(a\sqrt{n}/2 \le X-n/2 \le b\sqrt{n}/2) = \int_a^b \tfrac{1}{\sqrt{2\pi}} e^{-t^2/2} \ dt + o(1)$ for any $a<b$.
\end{lemma}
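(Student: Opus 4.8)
\textbf{Proof proposal for \Cref{lem:CLT}.}

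The plan is to deduce this directly from the de Moivre--Laplace local/integral limit theorem, i.e.\ the Central Limit Theorem specialised to i.i.d.\ Bernoulli$(1/2)$ summands. First I would write $X = \sum_{i=1}^n Z_i$ where the $Z_i$ are independent with $\Pr(Z_i=1)=\Pr(Z_i=0)=1/2$, so that $\mb{E}X = n/2$ and $\operatorname{Var}(X) = n/4$, whence the standardised variable is $X^* := (X - n/2)/(\sqrt{n}/2)$. The event $a\sqrt{n}/2 \le X - n/2 \le b\sqrt{n}/2$ is exactly $a \le X^* \le b$. By the Lindeberg--L\'evy Central Limit Theorem, $X^*$ converges in distribution to a standard normal $N(0,1)$, and since the limiting distribution function $\Phi$ is continuous (in particular continuous at the endpoints $a$ and $b$), the portmanteau theorem gives $\Pr(a \le X^* \le b) \to \Phi(b) - \Phi(a) = \int_a^b \tfrac{1}{\sqrt{2\pi}} e^{-t^2/2}\,dt$. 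Rewriting the error term $\Phi(b)-\Phi(a) - \Pr(a\le X^*\le b) = o(1)$ as $n \to \infty$ yields precisely the claimed identity.

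The only mild subtlety is the treatment of the endpoints: convergence in distribution only controls $\Pr(X^* \le t)$ at continuity points of $\Phi$, but since $\Phi$ is everywhere continuous this is not an issue, and it does not matter whether the inequalities defining the interval are strict or weak (the atom $\Pr(X^* = t)$ at any fixed $t$ is at most $\max_k \binom{n}{k} 2^{-n} = O(n^{-1/2}) = o(1)$, so one may replace $[a,b]$ by $(a,b)$ freely). One could alternatively invoke the Berry--Esseen theorem to get an explicit $O(n^{-1/2})$ rate, but since the lemma only claims an $o(1)$ error this is unnecessary; I would just cite the CLT as the excerpt itself suggests ("which follows from the Central Limit Theorem").

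I do not anticipate any real obstacle here — this is a standard fact recorded for convenient reference later in the paper, and the proof is a two-line application of a textbook theorem. If a self-contained argument were wanted instead of a citation, one would use Stirling's formula to show $\binom{n}{n/2 + x\sqrt{n}/2} 2^{-n} = \tfrac{1}{\sqrt{n}}\sqrt{\tfrac{2}{\pi}}\, e^{-x^2/2}(1+o(1))$ uniformly for $x$ in a bounded range, and then approximate the sum over $k$ with $n/2 + a\sqrt{n}/2 \le k \le n/2 + b\sqrt{n}/2$ by the corresponding Riemann sum for $\int_a^b \tfrac{1}{\sqrt{2\pi}}e^{-t^2/2}\,dt$ (the mesh being $2/\sqrt{n} \to 0$); but invoking the CLT is cleaner and entirely sufficient.
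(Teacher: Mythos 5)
Your proof is correct and is exactly the argument the paper has in mind; indeed, the paper states the lemma without proof, noting only that it "follows from the Central Limit Theorem," which is precisely the route you take. The only elaboration you add (the observation that the endpoint atoms are $O(n^{-1/2}) = o(1)$, so weak versus strict inequalities are immaterial) is a correct and harmless aside.
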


Finally, we need the following well-known observation on the difference of independent binomials.

\begin{lemma} \label{lem:bindiff}
If $X \sim B(n,1/2)$,  $Y \sim B(m,1/2)$ are independent then $X+m-Y \sim B(n+m,1/2)$.
\end{lemma}

\begin{proof}
Let $I_1,\dots,I_{n+m}$ be iid Bernoulli$(1/2)$ variables.
We can write $X =  \sum_{i=1}^n I_i$ and $Y =  \sum_{i=n+1}^{n+m} (1-I_i)$, 
as each $1-I_i$ is also Bernoulli$(1/2)$.
Then $X+m-Y =  \sum_{i=1}^{n+m} I_i\sim Bin(n+m,1/2)$.
\end{proof}

We conclude this subsection by applying \Cref{lem:bipartitehamilton} to prove \Cref{lem:nearbip},
thus completing the proof of \Cref{thm:main-}, and so \Cref{conj:EF}. 

\begin{proof}[Proof of \Cref{lem:nearbip}]
Let $n^{-1} \ll \eps \ll \gG \ll \dD  \ll 1$ and $G$ be an $(n+1)$-regular graph on $2n$ vertices. 
Suppose there is some $A \sub V(G)$ with $n \le|A|\le(1+32\eps)n$
such that $G[A,\ov{A}]$ has at least $(1-56\eps)n^{2}$ edges 
and minimum degree at least $\gG n$, where if  $|A|>n$ then
 $G[A]$ has maximum degree at most $\gG n$.

Consider $\Gh = G[S]$ where $S$ is a random subset of $V(G)$.
By Chernoff bounds, whp $|S| = n \pm n^{0.6}$,
all vertex degrees are $|S|/2 \pm |S|^{0.6}$,
$|S \cap A| = |A|/2 \pm n^{0.6}$, $|S \cap \ov{A}| = |\ov{A}|/2 \pm n^{0.6}$
and $G' := G[S \cap A,S \cap \ov{A}]$ has minimum degree at least $\gamma n/3$.

Also, $G'$ has at most as many non-edges as $G[A,\ov{A}]$,
so $e(G') \ge |A\cap S||\ov{A}\cap S|-56\eps n^{2} \ge (1/4 - 150\eps)n^2$.

To complete the proof via  \Cref{lem:bipartitehamilton} (with $\eps$ replaced by $150\eps$), 
it remains to show that the event $E$ that $(S \cap A, S \cap \ov{A})$ is a good cut of $G[S]$ 
has $\mb{P}(E) \ge 0.01$.

To do so, we consider two cases according to the size of $k := |A|-n$.

\medskip

\nim{Case 1:} $k > \dD\sqrt{n}$.

\medskip

Here $|A|>n$, so $G[A]$ has maximum degree at most $\gG n$, by assumption.
Also $G[A]$ has minimum degree at least $\dD(G)-|B| \ge k+1$,
so by \Cref{rem:simple} every vertex cover has size at least $e(G[A])/(\gG n)\geq nk/(2\gG n)=k/2\gamma$, and hence $G[A]$ has a matching of size at least $k/4\gamma$.
As $k > \dD\sqrt{n}$ and $\gG \ll \dD$, by Chernoff bounds
$\mb{P}[|M[S]| > k/20\gG] > 1 - \dD$, say.
Also, by \Cref{lem:bindiff} and Chernoff we have
$\mb{P}[0 \le |S \cap A |-|S \cap \ov{A}| \le k/20\gG] 
= \mb{P}[n-k \le B(2n,1/2) \le n-k + k/20\gG]  > 1/2 - \dD$, 
say, so $\mb{P}(E) \ge 1/2 - 2\dD$.

\medskip

\nim{Case 2:} $k \le \dD\sqrt{n}$.

\medskip

We consider any balanced cut $(A^*,B^*)$ obtained from $(A,\ov{A})$
by moving $k$ vertices from $A$ to $\ov{A}$
and minimum vertex covers $A',B'$ of $G[A^*],G[B^*]$.
By \Cref{lem:balancedcutbigmatching}, we have $\max \{|A'|,|B'|\} \ge \sqrt{n}-1$.

Suppose first that $|B'|  \ge \sqrt{n}-1$. 
Then $G[B^*]$ has a matching $M$ of size $(\sqrt{n}-1)/2$ by \Cref{rem:simple}.
By Chernoff we have $\mb{P}[|M[S]| > \sqrt{n}/9] > 0.99$, say.
At most $k \le \dD\sqrt{n}$ edges in $M(S)$ contain a moved vertex,
so this event gives a matching of size $0.1\sqrt{n}$ in $S \cap \ov{A}$.
Also, by \Cref{lem:bindiff} and \Cref{lem:CLT},
\[ \mb{P}[0 \le |S \cap \ov{A}|-|S \cap A | \le 0.1\sqrt{n}] 
= \mb{P}[n+k \le B(2n,1/2) \le n+k + 0.1\sqrt{n}],\]
which by  \Cref{lem:CLT} differs by at most $\dD$
from $\int := \int_0^{0.1/\sqrt{2}} \tfrac{1}{\sqrt{2\pi}} e^{-t^2/2} \ dt > 0.02$.
Thus  $\mb{P}(E) \ge 0.01$.

Similarly if $|A'|  \ge \sqrt{n}-1$,
we have a matching of size $0.1\sqrt{n}$ in $S \cap A$ with probability $>0.99$
and \[ \mb{P}[0 \le |S \cap A |-|S \cap \ov{A}| \le 0.1\sqrt{n}] 
= \mb{P}[n-k \le B(2n,1/2) \le n-k + 0.1\sqrt{n}] \] 
again differs by at most $\dD$ from $\int > 0.02$.
In all cases,  $\mb{P}(E) \ge 0.01$, as required.
\end{proof}


\section{Asymptotic result} \label{sec:asymptotic}

In this section we strengthen our solution of \Cref{conj:EF}
to the following asymptotically tight result.

\begin{theorem}\label{thm:main} 
Any  $(n+1)$-regular graph $G$ on $2n$ vertices 
has $\mb{P}(\Gh \text{ is Hamiltonian}) > 1/2 - o(1)$.
\end{theorem}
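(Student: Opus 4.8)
The plan is to refine the analysis of the three cases from \Cref{lem:casesdirac}, reusing \Cref{lem:bidense} and \Cref{lem:near2c} verbatim (these already give that $\Gh$ is Hamiltonian whp, which is far better than $1/2-o(1)$), so that all the work concentrates on the almost-bipartite case. In that case the only reason $\Gh$ can fail to be Hamiltonian is that the random balanced-ish cut $(S\cap A, S\cap \ov A)$ fails to be good, i.e.\ the larger side exceeds the smaller by more than the size of the largest linear forest available inside it. So the target is to show that this good-cut event $E$ has probability $1/2-o(1)$, and then apply \Cref{lem:bipartitehamilton} exactly as in the proof of \Cref{lem:nearbip}.

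First I would set up the key random variable: let $D := |S\cap A| - |S\cap \ov A|$, which by \Cref{lem:bindiff} is distributed as $B(2n,1/2) - n$ shifted by $k=|A|-n$, hence by \Cref{lem:CLT} is essentially a centered Gaussian of standard deviation $\sqrt{n}/2$ (possibly shifted by $k$). The point is that $\Pr(D\le 0)\to 1/2$ when $k=O(\sqrt n)$, and more generally $D$ is concentrated on a window of width $O(\sqrt n)$ around $-k$. On the other event side, I need a lower bound on the linear forest inside whichever part turns out larger. As in \Cref{lem:nearbip}, \Cref{lem:balancedcutbigmatching} combined with \Cref{rem:simple} gives a matching (hence linear forest) of size $\gtrsim \sqrt n$ inside $S\cap A$ or inside $S\cap \ov A$ with probability $1-o(1)$; crucially one should argue that the surplus side $|X|-|Y|$ is $O(\sqrt n)$ whp, so that a linear forest of size slightly more than $|X|-|Y|$ is available. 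Then $E$ essentially reduces to the one-sided event $\{D\le 0\}$ (or its mirror), up to a discrepancy of order $\sqrt n$ in the threshold that by \Cref{lem:CLT} only costs $o(1)$ in probability — because the Gaussian density is bounded, a shift of the cutoff by $o(\sqrt n)$ changes the probability by $o(1)$, but here the shift is $O(\sqrt n)$, which is exactly the regime where one must be careful.

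The main obstacle, therefore, is the case $k = \Theta(\sqrt n)$: here $D$ is a Gaussian centered at $-k = -\Theta(\sqrt n)$, so $\Pr(D \le 0)$ could be strictly bigger than $1/2$ (good, harmless) — the danger is the opposite orientation, where $|A|>n$ forces $G[A]$ to have maximum degree $\le \gamma n$ and thus a \emph{large} matching of size $\Omega(k/\gamma) = \Omega(\sqrt n/\gamma)$ inside $A$. Then the good-cut event in that orientation is $\{0 \le |S\cap A| - |S\cap \ov A| \le \Omega(k/\gamma)\} = \{n-k \le B(2n,1/2) \le n - k + \Omega(k/\gamma)\}$, and since $\gamma$ is tiny this window has width $\gg \sqrt n$ and by \Cref{lem:CLT} covers the Gaussian's entire right tail beyond $-k$, giving probability $\to 1/2$; combined with a symmetric contribution one recovers $1/2-o(1)$. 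I would handle this by splitting into $k \le \omega(n)\sqrt n$ with $\omega(n)\to\infty$ slowly versus $k$ larger, or more simply by observing that in every regime the relevant linear-forest bound beats the surplus with room to spare, so the good-cut probability is always $\Pr(D\le 0) - o(1) \ge 1/2 - o(1)$, possibly after swapping the roles of the two parts.

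Concretely the steps are: (1) invoke \Cref{lem:casesdirac} with suitable $\eps,\gamma$; (2) in cases (i),(ii) quote \Cref{lem:bidense},\Cref{lem:near2c} to get Hamiltonicity whp; (3) in case (iii), run the proof of \Cref{lem:nearbip} but replace the crude bounds ``$\ge 0.01$'' by the sharp computation that $\Pr(E) = 1/2 - o(1)$, using that (a) the matching inside the larger part exceeds the surplus whp, reducing $E$ to a one-sided binomial tail, and (b) \Cref{lem:CLT} evaluates that tail as $1/2-o(1)$ uniformly over the relevant range of $k$; (4) apply \Cref{lem:bipartitehamilton} on the event $E$ to conclude $\Pr(\Gh\text{ Hamiltonian}) \ge \Pr(E) - o(1) = 1/2 - o(1)$. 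I expect step (3a) — verifying that the surplus side is whp dominated by an available linear forest across \emph{all} values of $k$, including the transitional $k=\Theta(\sqrt n)$ regime where one may need to choose the favourable orientation of the cut — to be the crux of the argument.
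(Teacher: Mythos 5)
Your structural outline matches the paper's proof: split via \Cref{lem:casesdirac}, dispatch the bi-dense and almost-two-cliques cases with \Cref{lem:bidense} and \Cref{lem:near2c}, and reduce the almost-bipartite case to lower-bounding the good-cut probability for $D := |S\cap A|-|S\cap\ov A|$. Your identification of $D+n\sim B(2n,1/2)$ and the role of \Cref{lem:CLT} is also right. But there is a genuine gap: you locate the main obstacle at $k=\Theta(\sqrt n)$ and then assert that ``in every regime the relevant linear-forest bound beats the surplus with room to spare.'' Neither claim is correct. The regime $k>\dD\sqrt n$ is actually the easy one (Case 1 of the proof of \Cref{lem:nearbip}): when $|A|>n$, regularity forces $\dD(G[A])\geq k+1$ while $\DD(G[A])\leq \gG n$, so $G[A]$ has a matching of size $\Omega(k/\gG)\gg k$, which dwarfs the surplus window and gives probability $\geq 1/2-O(\dD)$. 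The genuinely hard case is $k\leq\dD\sqrt n$ with \emph{both} minimum vertex covers $A',B'$ of $G[A^*],G[B^*]$ of moderate size $\aA\sqrt n,\bB\sqrt n$ with $\aA,\bB=\TT(1)$ — allowed since \Cref{lem:balancedcutbigmatching} only forces $\aA\bB\geq 1-o(1)$. Take $\aA=\bB=1$: the matching you extract from \Cref{rem:simple} plus Chernoff has size $\approx\sqrt n/8$ in each of $G[A\cap S],G[B\cap S]$, whereas $D$ has standard deviation $\sqrt{n/2}$, so the resulting good-cut probability evaluates to $2\Phi(\sqrt2/8)-1\approx 0.14$, nowhere near $1/2$. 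The linear forest does \emph{not} beat the surplus with room to spare; they are of the same order, and the constants matter.

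This balanced-vertex-cover regime is precisely where the paper's real work lives, in the key \Cref{lem:half+} and its support lemmas, none of which your proposal anticipates. To push past $1/2$ one needs, in combination: (i) a sharper passage from vertex cover to matching in $\Gh$ — \Cref{lem: cover over 4} uses minimality of $C$ and a Hall-type argument to get a matching of size $(1-o(1))|C|/4$ rather than $|C|/8$; (ii) a second, independent source of linear forests — \Cref{lem:edges and max degree} exploits $(n+1)$-regularity to build bipartite subgraphs $G_A,G_B$ of the covers with $\TT(n)$ edges and max degree $O(\sqrt n)$, and \Cref{lineararboricity} (asymptotic linear arboricity) then extracts a linear forest of size $(2/\bB-\aA-o(1))\sqrt n$ in $A\cap S$ and $(1/\aA-o(1))\sqrt n$ in $B\cap S$; and (iii) the calculus verification \Cref{lem:calculus} that the resulting two-sided Gaussian mass $I[-m_1,m_2]$ with $m_1=\max\{\aA/4,2/\bB-\aA\}$ and $m_2=\max\{\bB/4,1/\aA\}$ exceeds $1/2+\lL$ uniformly over admissible $\aA,\bB$. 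Even the improved factor $|C|/4$ alone gives only $2\Phi(\sqrt 2/4)-1\approx 0.28$ at $\aA=\bB=1$; the linear-arboricity contribution is indispensable. Without all three ingredients the crucial regime cannot be closed, and the claim $\Pr(E)=1/2-o(1)$ is unsupported.
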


In the first subsection we give the proof of \Cref{thm:main},
assuming a key lemma (\Cref{lem:half+}), which will also
be used in the proof of our exact result \Cref{thm:main+} in the next section.
In the second subsection we prove the key lemma, assuming a technical lemma 
that we prove by elementary calculus in the final subsection.

\subsection{The key lemma} \label{sub:key}

Here we prove \Cref{thm:main} assuming the following key lemma.
For the statement, recall that a  cut $(X,Y)$ of a graph $H$ is \emph{$k$-good} 
if $|X| \ge |Y|$ and $H[X]$ has a linear forest with at least $k+|X|-|Y|$ edges,
or if $|Y| \ge |X|$ and $H[Y]$ has a linear forest with at least $k+|Y|-|X|$ edges.
 
 \begin{lemma}\label{lem:half+}  
Let $n^{-1} \ll  \dD \ll \lL \ll \eta \ll 1$.
Let $G$ be an $(n+1)$-regular graph on $2n$ vertices. 
Let $(A,B)$ be a balanced cut of $G$.
Let $A',B'$ be minimum vertex-covers of $G[A],G[B]$
of sizes $\aA \sqrt{n}$, $\bB \sqrt{n}$.
Suppose that $\min\{\aA,\bB\} > \eta$.
Let $G[S]$ be a random induced subgraph of $G$.
Then $(A \cap S,B \cap S)$ is a $\dD\sqrt{n}$-good cut of $G[S]$
with probability at least $1/2 + \lL$.
\end{lemma}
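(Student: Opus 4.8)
\textbf{Proof plan for \Cref{lem:half+}.}

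The plan is to run an analysis parallel to Case 2 of the proof of \Cref{lem:nearbip}, but squeezing out the precise constant $1/2+\lL$ rather than the crude $0.01$. The cut $(A,B)$ is already balanced, so the `goodness' requirement is: on whichever side of the cut $S$ lands larger, say $|A\cap S|\ge|B\cap S|$ with excess $e:=|A\cap S|-|B\cap S|$, we need a linear forest in $G[A\cap S]$ of size at least $\dD\sqrt n+e$ (and symmetrically). The key point is that $e$ and the available matching/linear-forest size in each part are governed by nearly independent random quantities, so we can estimate the joint probability via the normal approximation.

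First I would extract large matchings on both sides. Since $A',B'$ are minimum vertex covers of sizes $\aA\sqrt n,\bB\sqrt n$ with $\min\{\aA,\bB\}>\eta$, \Cref{rem:simple} gives matchings $M_A\subseteq G[A]$ of size $\ge\aA\sqrt n/2$ and $M_B\subseteq G[B]$ of size $\ge\bB\sqrt n/2$. (Here I should also record an \emph{upper} bound on how large a linear forest $G[A]$ can contain, coming from the vertex cover $A'$: any linear forest has at most, say, $2|A'|$ edges, since each vertex of $A'$ meets at most two forest edges and $A'$ covers all of them; so the linear-forest number of $G[A\cap S]$ is between $|M_A[S]|$ and $2|A'\cap S|$-ish. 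Concentration of $|A'\cap S|$ around $\aA\sqrt n/2$ is what makes this two-sided control possible — this is the feature that distinguishes \Cref{lem:half+} from the cruder \Cref{lem:nearbip}, and is presumably why the hypothesis pins down $|A'|,|B'|$ exactly rather than just lower-bounding them.) By Chernoff, whp $|M_A[S]|\ge(1-o(1))\aA\sqrt n/4$ and likewise for $B$, both far exceeding $\dD\sqrt n$ since $\dD\ll\eta$.

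Next comes the probabilistic heart. Write $X:=|A\cap S|$, $Y:=|B\cap S|$; these are independent $B(n,1/2)$, so by \Cref{lem:bindiff} the signed excess $X-Y$ has the distribution of $B(2n,1/2)-n$, i.e. is $\sqrt n\cdot N(0,1)$ asymptotically by \Cref{lem:CLT}. Condition on the (whp) event that both part-matchings survive sampling at size $\ge c\sqrt n$ for some fixed $c=c(\eta)\gg\dD$. On this event, $(A\cap S,B\cap S)$ is $\dD\sqrt n$-good \emph{provided} the excess $|X-Y|$ is at most $c\sqrt n-\dD\sqrt n$, which (since $c$ is a fixed multiple of $\eta$ and $\dD\ll\lL\ll\eta$) happens with probability $\int_{-c/\dots}^{c/\dots}\frac{1}{\sqrt{2\pi}}e^{-t^2/2}\,dt+o(1)$, and this Gaussian integral exceeds $1-2\lL$, say, by choosing the hierarchy $\dD\ll\lL\ll\eta$ appropriately (a $\pm c\sqrt n$ window around $0$ captures all but an $\eta$-independent, arbitrarily small tail once $c/\dD$ is large). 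Subtracting the $o(1)$ failure probability of the matching-survival event gives $\mb{P}\ge 1-3\lL>1/2+\lL$. I would organise this as: (i) whp-event that $|M_A[S]|,|M_B[S]|\ge c\sqrt n$; (ii) on that event, goodness reduces to $|X-Y|\le(c-\dD)\sqrt n$; (iii) apply \Cref{lem:bindiff} and \Cref{lem:CLT} to bound the latter probability below by $1-2\lL$; (iv) combine.

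The main obstacle is step (ii)'s bookkeeping in the genuinely unbalanced case — making sure that when $X-Y$ is, say, a constant multiple of $\sqrt n$ (not $o(\sqrt n)$), the linear forest we exhibit in the larger part really has at least $\dD\sqrt n+(X-Y)$ edges and not merely $\dD\sqrt n$ edges. This is fine because the surviving matching has $\ge c\sqrt n$ edges with $c$ a \emph{large} multiple of $\dD$, so as long as we only need to handle $|X-Y|\le(c-\dD)\sqrt n$ we have the slack; the excess beyond that window is exactly the Gaussian tail we are willing to lose. A minor secondary point: the two events `$|X-Y|$ small' and `matchings survive' are not independent, but both are high-probability (the first $1-2\lL$, the second $1-o(1)$), so a union bound on their complements suffices and no independence is needed.
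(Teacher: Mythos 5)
Your proof plan has a fatal numerical error at step~(iii), and it stems from a conceptual confusion between the scale of the window and the ratio $c/\dD$. You write that the Gaussian integral over the window $|X-Y|\le (c-\dD)\sqrt n$ "exceeds $1-2\lL$... once $c/\dD$ is large." This is false: $X-Y$ has standard deviation $\sqrt{n/2}$, so the probability of landing in a window of width $2c\sqrt n$ around $0$ is $\int_{-c\sqrt2}^{c\sqrt2}\tfrac{1}{\sqrt{2\pi}}e^{-t^2/2}\,dt$, which depends on $c$, not on $c/\dD$. In your setup $c$ is (up to constants) $\min\{\aA,\bB\}/4$, which can be as small as $\eta/4$. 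For small $\eta$ this integral is $\Theta(\eta)$ --- nowhere near $1-2\lL$, and in fact much less than $1/2$. So the event you bound has probability far below the $1/2+\lL$ claimed in the lemma, let alone the $1-3\lL$ you assert. Nothing in the hierarchy $\dD\ll\lL\ll\eta$ rescues this, because shrinking $\dD$ does not widen the window in units of the standard deviation.

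The lemma is not a "high probability" statement and cannot be proved by a two-sided matching argument; it is essentially a statement that a certain Gaussian integral exceeds $1/2$ by a margin, and the margin can be arbitrarily small. You are missing two ingredients that the actual argument hinges on. First, the constraint from \Cref{lem:balancedcutbigmatching} that $(\aA\sqrt n+1)(\bB\sqrt n+1)\ge n+1$, i.e. $\aA\bB\gtrsim 1$, which forces that if one vertex cover is small then the other must be large. Second, matchings are not enough: one must use \Cref{lem:edges and max degree} together with the linear arboricity theorem (\Cref{lineararboricity}) to extract from the random induced subgraph a linear forest of size roughly $\max\{\aA/4,\,2/\bB-\aA\}\sqrt n$ in the $A$-part and roughly $\max\{\bB/4,\,1/\aA\}\sqrt n$ in the $B$-part. (The second terms in these maxima come from decomposing a dense, low-maximum-degree bipartite graph inside a part into few linear forests and averaging, and they dominate the matching bound exactly when the corresponding cover is small.) The resulting window for $X-Y$ is then asymmetric: it extends by $\Theta(\eta)\sqrt n$ on one side but by $\Theta(1/\eta)\sqrt n$ or more on the other, and the Gaussian mass of such a lopsided interval is $1/2+\Omega_\eta(1)$, which is what the calculus lemma (\Cref{lem:calculus}) establishes. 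Your proposal, by symmetrising to the smaller of the two matchings, throws this asymmetry away, and that is precisely what breaks the argument.

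As a smaller point, the observation you parenthesise --- that a linear forest in $G[A]$ has at most $2|A'|$ edges --- is correct but plays no role; the lemma fixes $|A'|,|B'|$ not to enable an upper bound on forest size, but because the precise values $\aA,\bB$ determine the lower bounds above via the identity-like estimates of \Cref{lem:edges and max degree}, and because minimum (not merely maximal) covers are needed for the sharper sampling bound of \Cref{lem: cover over 4}.
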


\begin{proof}[Proof of  \Cref{thm:main}]
Let $n^{-1} \ll \eps \ll \gG \ll \dD \ll \eta \ll \tT \ll 1$ 
and $G$ be an $(n+1)$-regular graph on $2n$ vertices.
Let $\Gh = G[S]$ be a random induced subgraph of $G$.
We will show $\mb{P}(G[S] \text{ is Hamiltonian}) > 1/2 - \tT$.
As in the proof of \Cref{thm:main-}, 
we consider three cases according to \Cref{lem:casesdirac},
where in the first two cases
$\mb{P}(G[S] \text{ is Hamiltonian}) = 1 - o(1)$
by  \Cref{lem:bidense} and \Cref{lem:near2c}.
Thus it suffices to consider the third near-bipartite case:
there is some $A \sub V(G)$ with $n \le|A|\le(1+32\eps)n$
such that $G[A,\ov{A}]$ has at least $(1-56\eps)n^{2}$
edges and minimum degree at least $\gamma n$, where if  $|A|>n$ then
 $G[A]$ has maximum degree at most $\gamma n$.
 
 As in the proof of \Cref{lem:nearbip}, whp $G[S]$ satisfies all conditions
 of \Cref{lem:bipartitehamilton}, with the possible exception of 
 the event $E$ that $(S \cap A, S \cap \ov{A})$ is a good cut of $G[S]$.
 To complete the proof, it suffices to show $\mb{P}[E] > 1/2 - \tT/2$.
 Again, we consider two cases according to $k := |A|-n$.
 If $k > \dD\sqrt{n}$ then $\mb{P}(E) \ge 1/2 - 2\dD$ 
 as in Case 1 of the proof of \Cref{lem:nearbip}.
 Thus we can assume $k \le \dD\sqrt{n}$.
 As in Case 2 of the proof of \Cref{lem:nearbip},
 we consider any balanced cut $(A^*,B^*)$ obtained from $(A,\ov{A})$
by moving $k$ vertices from $A$ to $\ov{A}$
and minimum vertex covers $A',B'$ of $G[A^*],G[B^*]$ of sizes  $\aA \sqrt{n}$, $\bB \sqrt{n}$.
By \Cref{lem:balancedcutbigmatching}, we have $\max \{|A'|,|B'|\} \ge \sqrt{n}-1$.

Let $E'$ be the event that $(S \cap A^*, S \cap B^*)$ is a $k$-good cut of $G[S]$.
Then $E'$ implies $E$, so it suffices to show $\mb{P}[E'] > 1/2 - \tT/2$.
This holds by \Cref{lem:half+}  if $\min\{\aA,\bB\} \ge \eta$, 
so we can assume otherwise, say $\aA<\eta$ and $\bB > \tfrac12 \eta^{-1}$.
Then $G[B^*]$ contains a matching of size $\tfrac14 \eta^{-1} \sqrt{n}$ by \Cref{rem:simple}.
By Chernoff bounds, 
whp $G[S \cap B^*]$ contains a matching of size $\tfrac{1}{20} \eta^{-1} \sqrt{n}$,
and as $\eta \ll \tT$, with probability at least $1/2-\tT/2$ 
we have $0 \le |B \cap S|-|A \cap S| \le \tfrac{1}{20} \eta^{-1} \sqrt{n}$.
The result follows.
 \end{proof}

\subsection{Proof of the key lemma} \label{sub:pfkey}

In this subsection we prove \Cref{lem:half+}, 
assuming \Cref{lem:calculus} below,
which will be proved in the following subsection.
To achieve the asymptotically optimal probability 
of finding a good cut in $\Gh$ we will require 
a much tighter argument for finding linear forests,
based on the following lemma.

\begin{lemma}\label{lem:edges and max degree}
Let $G$ be an $(n+1)$-regular graph on $2n$ vertices. 
Let $(A,B)$ be a balanced cut of $G$.
Let $A',B'$ be minimal vertex-covers of $G[A],G[B]$
of sizes $\aA \sqrt{n}$, $\bB \sqrt{n}$.
Suppose $\alpha, \beta =\Theta(1)$
and $\bar{e}(A', B\sm B')\geq \bar{e}(B', A\sm A')$. 
Then there exist (bipartite) subgraphs $G_A$ of $G[A', A\sm A']$ and  $G_B$ of $G[B', B\sm B']$ with
\[ e(G_B) \ge n-O(\sqrt{n}), \quad \DD(G_B) \le \aA\sqrt{n}+1,
\quad e(G_A) \ge (2-\aA\bB)n-O(\sqrt{n}), \quad  \DD(G_A) \le \bB\sqrt{n}+1.\]
\end{lemma}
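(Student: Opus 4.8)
The plan is to exploit $(n+1)$-regularity of $G$ (equivalently, $(n-1)$-regularity of the complement) together with the defining property of a minimal vertex cover: $G[A \sm A']$ and $G[B \sm B']$ are independent sets. The target bounds should all fall out of double-counting edges and non-edges across the four parts $A', A\sm A', B', B\sm B'$, using the sizes $|A'| = \aA\sqrt n$, $|A\sm A'| = n - \aA\sqrt n$, and similarly for $B$. First I would set up the basic identities: since $B\sm B'$ is independent, $e(B\sm B', V) = (n+1)|B\sm B'|$ splits as $e(B\sm B', A\sm A') + e(B\sm B', A') + e(B\sm B', B')$, and likewise for $A\sm A'$; this is exactly the computation already carried out in the proof of \Cref{lem:balancedcutbigmatching}. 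The hypothesis $\bar{e}(A', B\sm B')\geq \bar{e}(B', A\sm A')$ will be used to decide which of the two `cross' edge counts is large, so that I can afford to route most of the edges of $G[B\sm B', B']$ into a subgraph $G_B$ with small maximum degree on the $B'$ side.

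For $G_B$: I would take $G_B = G[B\sm B', B']$ itself (or a subgraph thereof), and argue $e(G_B) \ge n - O(\sqrt n)$. Indeed, $e(B\sm B', B') = (n+1)|B\sm B'| - e(B\sm B', A\sm A') - e(B\sm B', A')$; the last term is at most $|B'||B\sm B'| \le \aA\sqrt n \cdot n = O(n^{3/2})$ — too weak as stated, so instead I bound it by $e(B\sm B', A') \le e(A', V) = (n+1)\aA\sqrt n = O(n^{3/2})$, still too weak. The right move is to bound it via non-edges: $e(B\sm B', A\sm A') = |B\sm B'||A\sm A'| - \bar{e}(B\sm B', A\sm A')$, and $|B\sm B'||A\sm A'| = (n-\bB\sqrt n)(n-\aA\sqrt n) = n^2 - O(n^{3/2})$, while the total number of non-edges incident to $B\sm B'$ is $|B\sm B'|(n-1) - e(B\sm B', V)$... this is circular. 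The clean approach: $e(B\sm B', B') \ge e(B\sm B', V) - e(B\sm B', A) = (n+1)|B\sm B'| - e(B\sm B', A)$, and $e(B\sm B', A) \le e(A, V) - e(A,A) $ is again too crude. I therefore expect the actual argument to mirror \Cref{lem:balancedcutbigmatching} precisely: express $e(B\sm B', B')$ as in that proof's display, observe each subtracted term is $O(n^{3/2})$ relative to the leading $(n+1)(n - \bB\sqrt n) = n^2 - O(n^{3/2})$ — wait, $e(B\sm B', B')$ itself should be $\Theta(n)$, not $\Theta(n^2)$, since $|B'| = O(\sqrt n)$. So in fact $e(B\sm B', B') \le |B'||B\sm B'| \le \bB\sqrt n \cdot n$, and the lower bound $e(B\sm B',B') \ge n - O(\sqrt n)$ must come from regularity forcing the $n - \bB\sqrt n$ independent vertices of $B\sm B'$ to send their $\sim n$ edges each almost entirely outside $B\sm B'$, i.e. into $A$ or into $B'$; the point is that at most $O(n^{3/2})$ of the total $\sim n(n-\bB\sqrt n)$ such edges land in $A\sm A'$ via the non-edge bound... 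I would make this precise by writing $e(B \sm B', B') = (n+1)|B\sm B'| - e(B\sm B', A\sm A') - e(B\sm B', A')$ and bounding $e(B\sm B', A\sm A') \le \bar{e}$-type quantities controlled by the hypothesis, exactly as in Lemma \ref{lem:balancedcutbigmatching}. The maximum degree bound $\DD(G_B) \le \aA\sqrt n + 1$ is immediate since $G_B \sub G[B\sm B', B']$ and $B \sm B'$ being independent means each vertex of $B'$ has its $B$-degree entirely inside $B\sm B'$ — no wait, the bound is on degrees into $B\sm B'$, which could be large. The bound $\aA\sqrt n + 1$ is suspicious: it should come from the $A'$ side. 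So $G_B$ is really a subgraph of $G[B',B\sm B']$ in which I have \emph{thrown away} edges to reduce the $B'$-side degree — but then I cannot keep $n - O(\sqrt n)$ edges unless $|B\sm B'| \cdot (\text{avg } B'\text{-degree}) \gtrsim n$, i.e. I keep about one edge per vertex of $B\sm B'$. That is the key realization: $G_B$ should be essentially a \emph{matching-like} subgraph saturating $B\sm B'$, with $\le n - O(\sqrt n)$ edges and each vertex of $B'$ used $\le \aA\sqrt n + 1$ times — achievable since $|B'| \cdot (\aA \sqrt n) = \bB\sqrt n \cdot \aA\sqrt n = \aA\bB n \ge n$ when... hmm, only if $\aA\bB \ge 1$, which holds by \Cref{lem:balancedcutbigmatching} up to lower-order terms.

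Given the above, the cleanest route is: (1) use \Cref{lem:balancedcutbigmatching} to get $\aA\bB \ge 1 - O(1/\sqrt n)$; (2) show $G[B', B\sm B']$ has $\ge n - O(\sqrt n)$ edges with every vertex of $B\sm B'$ covered and (after deleting a bounded number) $B'$-degrees $\le \aA\sqrt n + 1$, by a defect-version of Hall/regularity argument — every vertex of $B\sm B'$ has $\gtrsim n$ neighbours and at most $\aA\sqrt n$ lie in $B'$, the rest in $A$; (3) symmetrically for $G_A$, but here the target is $(2 - \aA\bB)n - O(\sqrt n)$ edges, reflecting that $G[A', A\sm A']$ carries \emph{more} slack — this is where the hypothesis $\bar e(A', B\sm B') \ge \bar e(B', A\sm A')$ enters, ensuring $e(A', A\sm A')$ is large (complementarily, few non-edges), and then $e(G_A) = e(A', A\sm A') - (\text{deletions})$ with $e(A', A\sm A') = |A'||A\sm A'| - \bar e(A', A\sm A') = \aA\sqrt n (n - \aA\sqrt n) - \bar e(A', A\sm A')$, and the non-edge count $\bar e(A', A\sm A')$ is bounded via complement-regularity by $(2-\aA\bB)n$... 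I would carry out this last non-edge bound exactly as in the displayed computation in the proof of \Cref{lem:xmatch}, which already derives $\bar e(A', A\sm A') \ge (n-1)|A'| - n/4 - (|A'|n - t)$-type inequalities; the symmetric upper bound is what feeds $e(G_A) \ge (2 - \aA\bB)n - O(\sqrt n)$. \textbf{The main obstacle} I anticipate is bookkeeping the $O(\sqrt n)$ error terms consistently across both $G_A$ and $G_B$ while simultaneously enforcing the max-degree caps $\bB\sqrt n + 1$ and $\aA\sqrt n + 1$ — i.e. proving that one can \emph{simultaneously} extract a large, low-max-degree bipartite subgraph, which is a degree-constrained subgraph (b-matching) existence statement rather than just an edge count; I would handle it by a greedy/flow argument showing that discarding $O(\sqrt n)$ edges suffices to meet each cap, using that the total edge counts exceed the caps' capacity $|A'| \cdot \bB\sqrt n = \aA\bB n$ and $|B'|\cdot \aA\sqrt n = \aA\bB n$ by only $O(\sqrt n)$, together with $\aA\bB \ge 1 - O(1/\sqrt n)$ from \Cref{lem:balancedcutbigmatching}.
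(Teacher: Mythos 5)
You have the right high-level intuition — exploit the identity $d(v,B)=\bar d(v,A)+1$ for $v\in B$ (regularity), use that $A\sm A'$, $B\sm B'$ are independent, and then prune edges to hit the max-degree cap. You even write down the correct starting identity $e(B\sm B',B')=(n+1)|B\sm B'|-e(B\sm B',A\sm A')-e(B\sm B',A')$, which if carried through gives $e(B\sm B',B')=(1+c+s)n-O(\sqrt n)$ with $cn=\bar e(A',B\sm B')$ and $sn=\bar e(A\sm A',B\sm B')$. But you abandon this computation midway, and the pruning step — the actual crux — is where your proposal goes wrong.

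The key gap is quantitative. You claim that ``discarding $O(\sqrt n)$ edges suffices to meet each cap,'' arguing that the edge count exceeds the caps' capacity $\approx\alpha\beta n$ by only $O(\sqrt n)$. Neither part of this is how the bound actually closes. The edge count $(1+c+s)n$ and the ``capacity'' $\alpha\beta n$ are generally separated by a $\Theta(n)$ gap, and the number of edges one must delete to reduce all degrees below $\alpha\sqrt n+1$ is $\Theta(n)$, not $O(\sqrt n)$. The paper's actual argument is that the total \emph{excess degree} (the sum over vertices of degree minus $\alpha\sqrt n+1$, over both sides of $G[B',B\sm B']$) is, by the very same complement-regularity identity, at most $c'n+sn$ where $c'n=\bar e(B',A\sm A')$; and here the hypothesis $c\ge c'$ finally does its job, giving a pruning cost $\le(c+s)n$. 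That is exactly the surplus in $(1+c+s)n$ above $n$, so $n-O(\sqrt n)$ edges survive. Your reduction to a b-matching/flow argument with $\alpha\beta\ge 1-O(1/\sqrt n)$ does not capture this — you need to tie the excess degree to the non-edge parameters $c,c',s$ directly, not to a capacity comparison.

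A second, related error: for $G_A$ you assert that $\bar e(A',A\sm A')$ is ``bounded by $(2-\alpha\beta)n$.'' That non-edge count is of order $n^{3/2}$ (since $|A'||A\sm A'|\approx\alpha n^{3/2}$ and $e(A',A\sm A')=\Theta(n)$), so a $\Theta(n)$ upper bound on it cannot be right. The correct route does not bound non-edges from above: one writes $e(A\sm A',A')=\sum_{v\in A\sm A'}(\bar d(v,B)+1)$, lower-bounds $\sum\bar d(v,B)\ge\bar e(B',A\sm A')=c'n$, uses the chain $\bar e(B',A)\ge e(B\sm B',B')-O(\sqrt n)\ge(1+c+s)n-O(\sqrt n)$ to deduce $c'\ge 1+c+s-\alpha\beta-O(1/\sqrt n)$, and hence $e(A\sm A',A')\ge(2+c+s-\alpha\beta)n-O(\sqrt n)$; pruning $\le(c+s)n$ then yields the stated $e(G_A)\ge(2-\alpha\beta)n-O(\sqrt n)$. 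Without this dependency on $c,s$ in both the lower bound on edges and the upper bound on pruning, the two $\Theta(n)$ quantities will not cancel and the lemma will not follow.
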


\begin{proof}
Recall for any $X,Y \sub V(G)$ and $v \in V(G)$
that $\bar{e}(X,Y)$ counts non-edges in $X \times Y$
and $\bar{d}(v,X)$ counts non-neighbours of $v$ in $X$.
Note that as $G$ is $(n+1)$-regular and $|A|=|B|=n$, 
for $v \in B$ we have $d(v,B)=t$ if and only if $\bar{d}(v,A)=t-1$,
and similarly interchanging $A$ and $B$;
we will use this observation repeatedly below.

Write $cn:=\bar{e}(A', B\sm B')$ and $c'n=\bar{e}(B', A\sm A')$. 
Then $c'\leq c$ by assumption. 

We also write $sn:=\bar{e}(A\sm A', B\sm B')$. 
As $B \sm B'$ is independent we have
\begin{equation}\label{3} e(B\sm B', B')= \sum_{v\in B\sm B'} (\bar{d}(v, A)+1) 
 \geq  (c+s)n + |B\sm B'|\geq (1+c+s)n- \bB\sqrt{n}.\end{equation} 

Let $B_1 = \{v \in B': d(v, B\sm B')\geq \aA\sqrt{n} \}$
and $B_2 =  \{v \in B \sm B': d(v, B')\geq \aA\sqrt{n} \}$.

For $v \in B_1$ we have $\bar{d}(v,A \sm A') \ge \bar{d}(v,A)-|A'| \ge  d(v, B\sm B')-1-\aA\sqrt{n}$.

Similarly, for $v \in B_2$ we have $\bar{d}(v,A \sm A') \ge d(v, B')-1-\aA\sqrt{n}$, so
\[  \sum_{v \in B_1} (d(v, B\sm B')-1-\aA\sqrt{n}) \le c'n\le cn, \qquad 
    \sum_{v \in B_2} (d(v, B')-1-\aA\sqrt{n}) \le sn. \]

Thus we can delete at most $sn+cn$ edges from $G[B', B \sm B']$
to obtain $G_B$ with maximum degree at most  $\aA\sqrt{n}+1$ 
and $e(G_B) \ge n-\bB\sqrt{n}$ by \eqref{3}.

From \eqref{3} we also obtain 
$\bar{e}(B', A) = \sum_{v \in B'} \bar{d}(v,A) =   \sum_{v \in B'} (d(v,B)-1) \ge (1+c+s)n- 2\bB\sqrt{n}$, so
\[ c'n = \bar{e}(B', A\sm A') \ge \bar{e}(B', A) - |A'||B'| \ge (1+c+s-\aA\bB)n- 2\bB\sqrt{n}. \]
Similarly to \eqref{3}, as $A \sm A'$ is independent we deduce
\[ e(A\sm A', A')\geq \sum_{v\in A\sm A'} (\bar{d}(v, B)+1) 
\ge \bar{e}(B', A\sm A') + |A \sm A'|
\ge (2+c+s-\aA\bB)n - (\aA+2\bB)\sqrt{n}.\]
Now by the same argument as for $G_B$ 
we can delete at most $sn+cn$ edges from $G[A', A \sm A']$
to obtain $G_A$ with maximum degree at most  $\bB\sqrt{n}+1$ 
and $e(G_A) \ge  (2-\aA\bB)n - O(\sqrt{n})$.
 \end{proof}

We also need to sharpen our argument for passing from minimum vertex covers in $G$ to matchings in $\Gh$.
Previously we only used minimality and lost a factor of $8$, using the naive estimate that one may lose a factor of $2$ 
in finding a matching in $G$ and then each edge survives with probability $1/4$ in $\Gh$.
In the following lemma we consider a \emph{minimum vertex cover},
meaning that it is not only minimal but also of minimum possible size,
and improves the factor $8$ to $4$ under mild assumptions on the size.

\begin{lemma}\label{lem: cover over 4} 
Let $G$ be a graph on vertices with minimum vertex cover $C$, where $\log^2 n \le |C| \le n/4$.
Then whp $\Gh$ has a matching of size $(1-o(1))|C|/4$.
\end{lemma}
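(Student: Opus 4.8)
The plan is to use the minimum-vertex-cover property together with Kőnig's theorem and a careful second-moment-type argument on the randomly chosen subset $S$. First I would fix a minimum vertex cover $C$ of $G$; by Kőnig's theorem (or just by minimality and maximality of a matching) there is a matching $M=\{x_1y_1,\dots,x_my_1\}$ with $m=|M|\ge |C|/2$. But a matching of size $|C|/2$ only gives $(1-o(1))|C|/8$ surviving edges in $\Gh$ in expectation, which is the factor-$8$ loss we want to avoid. The point of the minimality of $C$ (as a \emph{minimum} cover, not merely minimal) is a structural dichotomy: either $G$ already contains a matching of size close to $|C|$, in which case we are done by Chernoff bounds on the number of surviving edges, or else every large matching is ``tight'' against $C$ and one can exploit the Gallai--Edmonds / Kőnig structure.

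The cleanest route is probably the following. By Kőnig's theorem applied to the bipartite-like structure forced by $C$ being a \emph{minimum} cover, one can in fact guarantee a matching $M$ of size exactly $|C|$ in $G$ itself --- indeed, a minimum vertex cover $C$ has the property that every vertex of $C$ is matched in some maximum matching, and by a standard argument (building an auxiliary bipartite graph between $C$ and $V(G)\setminus C$, noting that $V(G)\setminus C$ is independent so every edge of $G$ meets $C$) there is a matching saturating $C$ precisely when $|C|$ equals the matching number; when $|C|$ is strictly larger than the matching number $\nu$, one has $\nu < |C|$ but then $\nu$ is itself a vertex-cover-sized quantity and one works with a maximum matching $M$ of size $\nu$, where the Gallai--Edmonds decomposition tells us $C$ meets the structure tightly. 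So set $M$ to be a maximum matching; $|M|=\nu(G)\ge|C|/2$, but more is true: by the König-type bound, $|M|\geq|C|$ unless $G$ has odd components in its Gallai--Edmonds decomposition, and those can be absorbed. I would then argue that we may take $|M| \geq (1-o(1))|C|$ after a negligible adjustment, because the alternative (many small odd components) contradicts $|C|\le n/4$ together with minimality of $C$.

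Granting a matching $M$ in $G$ with $|M|\ge(1-o(1))|C|$, the probabilistic step is routine: each edge $x_iy_i$ of $M$ lies in $\Gh$ iff both endpoints are chosen, which happens independently across the edges of $M$ with probability $1/4$ each. Hence the number of surviving edges $M[S]$ is distributed as $\mathrm{Bin}(|M|,1/4)$, so $\mathbb{E}|M[S]| = (1-o(1))|C|/4$, and since $|C|\ge \log^2 n \to\infty$ the Chernoff bound gives $|M[S]| \ge (1-o(1))|C|/4$ whp. The surviving edges still form a matching (vertex-disjointness is inherited), so $\Gh$ has a matching of size $(1-o(1))|C|/4$ whp, as required. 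The condition $|C|\le n/4$ is used only to make the structural dichotomy in the second paragraph clean (it rules out the degenerate regime where $C$ is so large that the Gallai--Edmonds odd components cannot be controlled), and $|C|\ge\log^2 n$ is exactly what makes the Chernoff concentration beat the $o(1)$ error.

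The main obstacle is the second paragraph: upgrading ``minimum vertex cover of size $|C|$'' to ``matching of size $(1-o(1))|C|$ in $G$ itself''. In general graphs, $\nu$ and $\tau$ can differ by a factor of $2$ (e.g. disjoint triangles), so one genuinely needs to use that we only want $(1-o(1))|C|/4$ in $\Gh$, not $(1-o(1))|C|/2$. The resolution is that each triangle-like odd component contributes $3$ vertices to $V(G)$ but only $2$ to a maximum matching and (being a minimum cover) only $2$ to $C$ --- wait, that gives $\nu=\tau$ on triangles. The real gap cases are odd components that are larger; there, a minimum vertex cover of an odd component on $2k+1$ vertices has size $\ge k$ while the matching number is exactly $k$, so in fact $\nu=\tau$ holds \emph{within each factor-critical component} up to $O(1)$, and summing over the at most $|C|$ components one loses only $O(1)$ per component, hence a lower-order term overall provided the number of components is $o(|C|)$, which again follows from $|C|\le n/4$ after a short counting argument. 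I would present this as a lemma: a graph with minimum vertex cover $C$ has a matching of size $|C|-O(1)\cdot(\text{number of nontrivial Gallai--Edmonds components})$, and then observe the number of such components is at most $|C|/1$ but the $O(1)$ loss per component telescopes to $o(|C|)$ under the size hypothesis; if this is still too lossy, fall back to the weaker-but-sufficient claim that $\Gh$ inherits from $C$ a matching of size $(1-o(1))|C|/4$ directly, by running the second-moment method on $|M[S]|$ for a \emph{maximum} matching $M$ of $G$ and using minimality of $C$ to show the uncovered-by-$M[S]$ vertices of $C\cap S$ can be re-matched within $S$ whp.
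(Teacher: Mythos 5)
Your main plan---upgrade the minimum vertex cover to a matching of size $(1-o(1))|C|$ inside $G$, then survive with probability $1/4$ per edge---is blocked by a genuine and irreparable factor-$2$ gap. For $k$ disjoint triangles (plus isolated vertices to fix $n$), the minimum vertex cover has size $|C|=2k$ while the maximum matching has size $\nu=k=|C|/2$; you miscount this in your own discussion, claiming ``that gives $\nu=\tau$ on triangles,'' but a triangle has $\nu=1$ and $\tau=2$. This gap is a constant factor, not a lower-order term, and the hypothesis $|C|\le n/4$ does nothing to control it, so the Gallai--Edmonds telescoping you propose cannot recover a $(1-o(1))|C|$ matching. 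In the triangle example your argument would yield only $(1-o(1))|C|/8$ surviving edges, half the claimed bound (yet the lemma is true there: a random half of each triangle's vertices contains an edge with probability $1/2$, giving $\approx k/2=|C|/4$).

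The paper's proof avoids this by never trying to find a large matching in $G$ directly. It conditions on $C'=S\cap C$, takes a maximal matching $M$ inside $G[C']$ (whose edges are already entirely in $S$, so they survive for free), observes $C''=C'\setminus V(M)$ is independent, and then uses the fact that $C$ is a \emph{minimum} cover to verify Hall's condition for the bipartite graph between $C''$ and $V(G)\setminus C$: if Hall failed for some $X\subseteq C''$, then $(C\setminus X)\cup N(X)$ would be a strictly smaller cover. This yields a matching $M'$ saturating $C''$, and each edge of $M'$ survives with probability $1/2$ (not $1/4$), since its $C''$-endpoint is already in $S$. The two contributions $|M|+(1-o(1))(|C'|-2|M|)/2$ always sum to $(1-o(1))|C'|/2\ge(1-o(1))|C|/4$ regardless of $|M|$, which is exactly the improvement from the naive factor $8$ to factor $4$. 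Your ``fallback'' at the end gestures toward re-matching the uncovered vertices of $C\cap S$, which is the right instinct, but you do not identify the Hall's-condition mechanism (via minimality of $C$) or the $1/2$-versus-$1/4$ survival probability that make it work.
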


\begin{proof}
Write $\Gh = G[S]$, where $S$ is a uniformly random subset of $V(G)$.
We condition on $C' := S \cap C$ and fix a maximal matching $M$ in $G[C']$.
By Chernoff whp $|C'| \ge (1-o(1))|C|/2$.
We can assume $|M| < (1-o(1))|C'|/2$, otherwise we are done.
Then $C'' := C' \sm V(M)$ is an independent set, by maximality of $M$.

Next we note that the bipartite graph $G'=G[C'', V(G) \sm C]$ satisfies the Hall condition 
that $|N_{G'}(X)|\geq |X|$ for every $X\subseteq C''$. Indeed, if this failed for some $X$
then $(C \sm X) \cup N_{G'}(X)$ would be a vertex cover of $G$
that contradicts $C$ having minimum possible size.
Thus there is a matching $M'$ in $G'$ which covers $C''$.

Now we reveal the rest of $S \sm C$. Each edge of $M'$ survives in $M'[S]$
with probability $1/2$, so by Chernoff whp $(1-o(1))|C''|/2$ edges survive.
Combining $M$ and $M'[S]$ gives the required matching as
\[ |M \cup M'[S]| = |M| + (1-o(1))(|C'|-|M|)/2 \ge (1-o(1))|C'|/2 \ge (1-o(1))|C|/4. \qedhere \]
\end{proof}

We will obtain linear forests via a well-known result of Alon \cite{alon1988linear} (see also \cite{lang2023improved})
giving an approximate form of the linear arboricity conjecture.

\begin{theorem}[Asymptotic linear arboricity]\label{lineararboricity}
    Let $1/\Delta\ll \eps$ and $G$ be a graph of maximum degree at most $\Delta$. 
    Then $G$ can be edge-decomposed into at most $(1+\eps)\Delta/2$ linear forests. 
\end{theorem}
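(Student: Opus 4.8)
This is a classical theorem of Alon \cite{alon1988linear} (with later quantitative improvements by Lang and Postle \cite{lang2023improved}), so the plan is to recall the proof via the Lov\'asz Local Lemma. First I would perform a routine reduction to the case that $G$ is $d$-regular with $d=2r$ even: if $\Delta$ is odd we may replace it by $\Delta+1$, changing the target bound by a $1+O(1/\Delta)$ factor that is absorbed into $\eps$; and any graph of maximum degree $d$ with $d$ even embeds as a subgraph of a $d$-regular graph on more vertices, so a decomposition of the latter into linear forests restricts to one of $G$. From now on $G$ is $2r$-regular.

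Since every degree of $G$ is even, $G$ has an Eulerian orientation, in which every vertex has in-degree and out-degree exactly $r$. I would then form the bipartite ``split graph'' $H$ on $V(G)\times\{+,-\}$ with $u^{+}v^{-}\in E(H)$ whenever $u\to v$; then $H$ is $r$-regular and bipartite, so by K\"onig's theorem $E(H)$ partitions into $r$ perfect matchings. Pulling these back to $G$ gives spanning subgraphs $G_1,\dots,G_r$ that partition $E(G)$, each with every vertex of in-degree and out-degree $1$, hence $2$-regular; as $G$ is simple, each $G_i$ is a disjoint union of cycles of length at least $3$.

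Deleting suitably chosen edges from each $G_i$ writes $G_i=P_i\cup M_i$, where $P_i$ is a linear forest (each cycle, cut into arcs, becomes a union of paths) and $M_i$ is the matching of deleted edges. This already exhibits $r$ linear forests $P_1,\dots,P_r$, so it remains to cover $M:=\bigcup_{i=1}^{r}M_i$ by at most $\eps r$ further linear forests, and for this it suffices to choose the deleted edges so that $M$ has maximum degree at most $\eps r$, since a Vizing edge-colouring then splits $M$ into at most $\eps r$ matchings. For cycles of length at least $4/\eps$ this is a textbook Local Lemma computation: cut each such cycle into arcs of length $\Theta(1/\eps)$ by deleting one uniformly random edge per window, all choices independent, so for every vertex $v$ the degree $d_M(v)$ is a sum of independent indicators of mean at most $\eps r/2$, the bad event $\{d_M(v)>\eps r\}$ has probability exponentially small in $\eps r$ by a Chernoff bound, and it depends on the choices in the $O(r)$ windows meeting $v$ and hence is mutually independent of all but $O(r/\eps)$ other bad events; since $1/\Delta\ll\eps$ makes $\eps r$ large, the Local Lemma applies with room to spare and a valid set of deletions exists.

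The genuinely delicate point — and the step I expect to be the main obstacle — is the treatment of \emph{short} cycles in the $G_i$, since these may be as numerous as $\Omega(n)$ in a single $G_i$ (for instance when some $G_i$ consists almost entirely of triangles), and then deleting even one edge per short cycle already forces $M$ to have average degree $\Omega(r)$, far more than $\eps r$. Overcoming this requires exploiting the freedom in the choice of Eulerian orientation and of the K\"onig decomposition, together with a more careful (again local) argument, to ensure that the short-cycle part of the $G_i$ is sparse enough to be absorbed into the error term; this is exactly where Alon's proof, and the refinements of \cite{lang2023improved}, do their real work, and it is the ingredient I would import rather than reprove. Granting it, combining the $r$ forests $P_i$ with the at most $\eps r$ matchings covering $M$ decomposes $E(G)$ into at most $(1+\eps)d/2=(1+\eps)\Delta/2$ linear forests, as required.
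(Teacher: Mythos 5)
The paper does not prove this result; it imports it as a known theorem of Alon \cite{alon1988linear} (see also \cite{lang2023improved}), so there is no in-paper proof to compare against. Regarding your attempt: the reduction to $2r$-regular graphs, the Petersen two-factorisation via an Eulerian orientation and K\"onig, and the Local Lemma computation for breaking long cycles are all sound, but together they only yield the theorem under a girth hypothesis on the two-factors. As you yourself observe, the treatment of short cycles is where the real content of Alon's theorem lies, and you explicitly defer that step to the literature. So what you have written is a correct partial reconstruction with an acknowledged hole, not a proof. Moreover, the fix you gesture at --- ``exploiting the freedom in the choice of Eulerian orientation and of the K\"onig decomposition'' --- is not how Alon's argument actually resolves the issue: his proof circumvents short cycles by a random partition of the vertex set into (roughly a prime number of) classes, so that the two-regular digraphs built from class-to-class arcs have only long directed cycles by a divisibility constraint, while the within-class edges have small maximum degree and are absorbed into the $\eps\Delta$ error term. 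Since the paper already treats this as a black-box citation, the cleanest course is simply to cite \cite{alon1988linear} and \cite{lang2023improved} rather than to present an incomplete derivation.
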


So far we have only relied on the Chernoff concentration inequality,
but for the asymptotically correct count on edges in random induced subgraphs
we also need the following consequence of Azuma's inequality \cite{azuma1967}.

\begin{lemma} \label{lem:induce}
Let $G$ be a graph on $n$ vertices with $\DD(G) = o(e(G))$.
Then whp $e(\Gh) = (1+o(1))e(G)/4$.
\end{lemma}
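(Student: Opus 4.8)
The statement to prove is \Cref{lem:induce}: if $G$ has $\DD(G) = o(e(G))$ then whp $e(\Gh) = (1+o(1))e(G)/4$. Write $\Gh = G[S]$ where $S$ is obtained by including each vertex independently with probability $1/2$. For an edge $uv \in E(G)$, it survives in $G[S]$ iff both $u,v \in S$, which happens with probability $1/4$, so $\mb{E}[e(G[S])] = e(G)/4$ by linearity of expectation. The whole content is therefore a concentration statement: $e(G[S])$ is concentrated around its mean up to a $(1+o(1))$ factor, which (since the mean is $e(G)/4 \to \infty$ under $\DD(G) = o(e(G))$, hence $e(G) \to \infty$) amounts to showing the standard deviation is $o(e(G))$.

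\textbf{Main step: bounded differences / Azuma.} I would expose the $n$ independent vertex-indicator random variables $X_1,\dots,X_n$ (where $X_i = \mathbf{1}[v_i \in S]$) and view $f = e(G[S])$ as a function of $(X_1,\dots,X_n)$. Changing a single coordinate $X_i$ — i.e., adding or removing vertex $v_i$ from $S$ — changes $e(G[S])$ by at most $d_G(v_i) \le \DD(G)$, since the only edges affected are those incident to $v_i$. Thus $f$ satisfies the bounded-differences (Lipschitz) condition with constants $c_i \le \DD := \DD(G)$. By the Azuma--Hoeffding inequality (the version of Azuma's inequality cited as \cite{azuma1967}),
\[
\mb{P}\big[\,|e(G[S]) - e(G)/4| \ge t\,\big] \le 2\exp\!\Big(-\frac{t^2}{2n\DD^2}\Big).
\]
Now I would choose $t = \omega$ times the right scale. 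Since we want a multiplicative error, set $t = \eta\, e(G)$ for an arbitrary fixed $\eta > 0$; it suffices to show the exponent $\to -\infty$, i.e. $e(G)^2/(n\DD^2) \to \infty$. Here a small subtlety arises: $n\DD^2$ need not be $o(e(G)^2)$ in general (e.g. a sparse graph where $e(G)$ is only slightly superlinear in $\DD$ could fail), so the hypothesis $\DD = o(e(G))$ alone combined with $e(G) \le n\DD/2$ gives $e(G)^2 / (n\DD^2) \ge e(G)^2/(n \cdot \DD \cdot (2e(G)/n)) $... let me instead use $n\DD \ge 2e(G)$, wait that's the wrong direction. The clean way: use the fact that $\sum_i c_i^2 \le \sum_i d_G(v_i) \cdot \DD = 2e(G)\DD$, which is the correct refined bound (changing $X_i$ moves $f$ by at most $d_G(v_i)$, not just $\DD$). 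Then Azuma gives exponent $-t^2/(4e(G)\DD)$, and with $t = \eta e(G)$ this is $-\eta^2 e(G)/(4\DD) \to -\infty$ precisely because $\DD = o(e(G))$. This is the point where the hypothesis is used exactly, and it is the one place that needs a little care rather than a black-box invocation.

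\textbf{Remaining steps and the obstacle.} Having established $\mb{P}[\,|e(G[S]) - e(G)/4| \ge \eta e(G)\,] \le 2\exp(-\eta^2 e(G)/(4\DD)) = o(1)$ for every fixed $\eta > 0$, one concludes that whp $e(G[S]) = (1\pm \eta)e(G)/4$; since $\eta$ is arbitrary this is exactly the claim $e(\Gh) = (1+o(1))e(G)/4$. (To be fully rigorous about the "$o(1)$" uniformly, one takes $\eta = \eta_n \to 0$ slowly, e.g. $\eta_n^2 e(G)/\DD \to \infty$ is still achievable since $e(G)/\DD \to \infty$, say $\eta_n = (\DD/e(G))^{1/3}$.) I do not expect any genuine obstacle here — the lemma is a routine application of the bounded-differences inequality — but the one spot demanding attention is getting the Lipschitz constants right: one must note that the relevant bound on the per-coordinate variation is the \emph{degree} $d_G(v_i)$, whose squares sum to $2e(G)\DD$, rather than the crude $\DD^2$ per coordinate giving $n\DD^2$; only the former yields the needed $o(1)$ under the stated hypothesis $\DD(G) = o(e(G))$.
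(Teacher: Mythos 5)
Your proof is correct and takes essentially the same route as the paper: expose the vertex indicators, observe the per-vertex Lipschitz constant is $d_G(v)$ (not just $\DD(G)$), sum the squares to get $Q = \sum_v d_G(v)^2 \le 2\DD(G)e(G) = o(e(G)^2)$, and apply Azuma. Your aside about the crude bound $n\DD^2$ being insufficient and the need for the refined bound is exactly the observation the paper's proof relies on from the start.
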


\begin{proof}
Let $X=e(S)$, where $S$ is a uniformly random subset of $V(G)$. Then $\mb{E}X=e(G)/4$, and  
changing whether some vertex $v$ is in $S$ can affect $e(\Gh)$ by at most $d_G(v)$.
By Azuma's inequality $\mb{P}(|X-\mb{E}X|>t) \le e^{-t^2/2Q}$, where 
$Q = \sum_{v \in V(G)} d(v)^2 \le \DD(G)  \sum_{v \in V(G)} d(v) = 2\DD(G)e(G) = o(e(G)^2)$.
The lemma follows.
\end{proof}

We conclude this subsection with the proof of the key lemma,
assuming the following calculus lemma to be proved in the next subsection.

\begin{lemma}\label{lem:calculus}
Let $n^{-1} \ll  \dD \ll \lL \ll \eta \ll 1$
and $\alpha,\beta>\eta$. 
Define $m_1 = \max\{\alpha/4, 2/\bB-\alpha\}$
and $m_2 =\max\{\beta/4, 1/\alpha\}$.
Let $X,Y \sim B(n,1/2)$ be independent binomials.
Consider the event $E$ that 
$-(m_1-\dD)\sqrt{n} \le X-Y \le (m_2-\dD)\sqrt{n}$. 
Then $\mb{P}[E] \ge 1/2 + \lL$.
\end{lemma}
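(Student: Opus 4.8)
The plan is to first translate the binomial event into a statement about the standard normal distribution function $\Phi(x)=\int_{-\infty}^x\tfrac1{\sqrt{2\pi}}e^{-t^2/2}\,dt$, then reduce the whole lemma to a single deterministic inequality, and finally dispatch that inequality by an elementary calculus analysis of one auxiliary function of one variable.

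\emph{Reduction to $\Phi$.} By \Cref{lem:bindiff}, $X+n-Y\sim B(2n,1/2)$, so $X-Y$ has the same law as $B(2n,1/2)-n$, and \Cref{lem:CLT} (applied with $2n$ in place of $n$) gives
\[ \Pr[E]=\Phi\big(\sqrt2(m_2-\dD)\big)+\Phi\big(\sqrt2(m_1-\dD)\big)-1+o(1), \]
using $\Phi(-x)=1-\Phi(x)$. I would then set $g(x):=\Phi(\sqrt2 x)-\tfrac12$, which is increasing and concave on $[0,\infty)$ with $g(0)=0$, $g'(x)=\tfrac1{\sqrt\pi}e^{-x^2}$, and is $\tfrac1{\sqrt\pi}$-Lipschitz on $\mathbb{R}$; note $m_1\ge\aA/4\ge\eta/4$ and $m_2\ge\bB/4\ge\eta/4$, so $m_1,m_2\gg\dD$. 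Since $\Phi(\sqrt2m_1)+\Phi(\sqrt2m_2)-1=g(m_1)+g(m_2)$, it is enough to prove the deterministic bound $g(m_1)+g(m_2)\ge\tfrac12+c\eta$ for some absolute constant $c>0$ (I expect $c=\tfrac1{16}$ works): then $\Pr[E]\ge\tfrac12+c\eta-\tfrac{2\dD}{\sqrt\pi}+o(1)\ge\tfrac12+\lL$, since $\dD\ll\lL\ll\eta$ and $n$ is large.

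\emph{The planar reduction.} The key point is that the only features of $m_1,m_2$ needed are $m_1\ge\aA/4\ge\eta/4$, $m_2\ge1/\aA$ and $m_2\ge\bB/4\ge\eta/4$, which together force $m_1m_2\ge(\aA/4)(1/\aA)=\tfrac14$. So it suffices to show $g(x)+g(y)\ge\tfrac12+c\eta$ whenever $x,y\ge\eta/4$ and $xy\ge\tfrac14$. If $\min\{x,y\}\ge\tfrac12$ this follows from $g(x)+g(y)\ge2g(\tfrac12)=2\Phi(2^{-1/2})-1>0.52$. Otherwise, say $x<\tfrac12$; then $x\in[\eta/4,\tfrac12)$ and $y\ge1/(4x)$, so since $g$ is increasing, $g(x)+g(y)\ge h(x)$ where $h(t):=g(t)+g\big(\tfrac1{4t}\big)$; note $h$ is invariant under $t\mapsto\tfrac1{4t}$, which exchanges the two cases $x<\tfrac12$ and $y<\tfrac12$. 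So everything reduces to bounding $h$ from below on $[\eta/4,\tfrac12]$.

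\emph{Analysing $h$, and the obstacle.} This last step is where the calculus lives and is the only real difficulty: a term-by-term bound on $h$ is too weak (for instance $g(\tfrac14)+g(\tfrac12)<\tfrac12$), so one must rule out $h$ attaining its minimum on $[\eta/4,\tfrac12]$ in the interior. One computes $h'(t)=\tfrac1{\sqrt\pi}\big(e^{-t^2}-\tfrac1{4t^2}e^{-1/(16t^2)}\big)$, so $h'(t)>0$ iff $\psi(t^2)>0$ where $\psi(s):=\ln(4s)-s+\tfrac1{16s}$. On $(0,\tfrac14)$ one has $\psi(s)\to+\infty$ as $s\to0^+$, $\psi(\tfrac14)=0$, and $\psi'(s)=\tfrac1s-1-\tfrac1{16s^2}$ vanishes only at $s=\tfrac{2\pm\sqrt3}{4}$; hence $\psi$ decreases then increases on $(0,\tfrac14)$, and since it rises to $\psi(\tfrac14)=0$ on the increasing part, $\psi<0$ there, so $\psi$ has a unique zero $s_0\in(0,\tfrac{2-\sqrt3}{4})$ with $\psi>0$ on $(0,s_0)$ and $\psi<0$ on $(s_0,\tfrac14)$. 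Therefore $h$ increases on $[\eta/4,\sqrt{s_0}]$ and decreases on $[\sqrt{s_0},\tfrac12]$, so $\min_{[\eta/4,1/2]}h=\min\{h(\eta/4),h(\tfrac12)\}$. Here $h(\tfrac12)=2g(\tfrac12)>0.52>\tfrac12+c\eta$ for $\eta$ small, while $h(\eta/4)=g(\eta/4)+g(1/\eta)\ge\tfrac\eta8+\big(\tfrac12-\tfrac\eta{16}\big)=\tfrac12+\tfrac\eta{16}$, using $g(x)\ge x/2$ on $[0,\tfrac14]$ (from concavity and $g(0)=0$) and the tail bound $1-\Phi(z)\le\tfrac1{z\sqrt{2\pi}}e^{-z^2/2}$ at $z=\sqrt2/\eta$. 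This gives the deterministic bound with $c=\tfrac1{16}$, and hence the lemma; every step besides the sign analysis of $\psi$ is a routine estimate on $\Phi$.
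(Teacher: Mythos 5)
Your proof is correct, and at its core it relies on exactly the same calculus as the paper: your $g(x)=\Phi(\sqrt{2}x)-\tfrac12$ satisfies $h(t)=g(t)+g(1/(4t))=f(4t)$ where $f(\alpha)=I[-\alpha/4,1/\alpha]$ is the paper's function, and your auxiliary function $\psi(s)=\ln(4s)-s+\tfrac1{16s}$ is the paper's $g(x)=-\tfrac{x}{16}+\tfrac1x+\ln\tfrac x4$ under $s=x/16$. So the root-finding and sign analysis of the derivative are the same computation. What differs is the reduction to that calculus, and there you have a genuinely cleaner route: the paper does a three-way case split on $\beta$ versus $\tfrac{8}{5\alpha}$ and $\tfrac4\alpha$ to reach $I[-m_1,m_2]\ge f(\alpha)$ and then argues $f(\alpha)>\tfrac12$ for all $\alpha>0$ plus ``continuity''; you instead extract the single inequality $m_1m_2\ge\tfrac14$ (together with $m_1,m_2\ge\eta/4$), symmetrize under $t\mapsto 1/(4t)$, and confine the analysis to the compact interval $[\eta/4,\tfrac12]$. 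This buys you two things. First, only the lone root $s_0$ of $\psi$ in $(0,\tfrac14)$ is needed, rather than three roots over all of $(0,\infty)$. Second, and more substantively, your endpoint estimate $h(\eta/4)\ge\tfrac12+\eta/16$ gives an explicit margin in terms of $\eta$, whereas the paper's claim that $f(\alpha)>\tfrac12$ for all $\alpha$ ``suffices'' by continuity is actually not immediate when $\alpha>\eta$ can be arbitrarily large (indeed $f(\alpha)\to\tfrac12$ as $\alpha\to\infty$, so one must invoke $\beta>\eta$, i.e.\ $m_2\ge\eta/4$, exactly as you do). Your version closes that loophole. Two tiny nits: (i) the statement ``$g(x)\ge x/2$ on $[0,\tfrac14]$ from concavity and $g(0)=0$'' additionally uses $4g(1/4)\ge 1/2$, i.e.\ $\Phi(\sqrt2/4)\ge 5/8$, which holds but should be stated; (ii) you should confirm $\eta/4<\sqrt{s_0}$ so the interval genuinely straddles the critical point — this is automatic since $s_0$ is an absolute constant and $\eta\ll 1$.
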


\begin{proof}[Proof of \Cref{lem:half+}]
Let $n^{-1} \ll  \dD \ll \lL \ll \eta \ll 1$.
Let $G$ be an $(n+1)$-regular graph on $2n$ vertices
and $(A,B)$ be a balanced cut of $G$.
Let $A',B'$ be minimum vertex-covers of $G[A],G[B]$
of sizes $\aA \sqrt{n}$, $\bB \sqrt{n}$,
where $\min\{\aA,\bB\} > \eta$.
Let $\Gh=G[S]$ be a random induced subgraph of $G$.
Let $E'$ be the event that $(A \cap S,B \cap S)$ is a $\dD\sqrt{n}$-good cut of $\Gh$.
We will show $\mb{P}[E'] \ge \tfrac12 + \lL$.

We assume without loss of generality that
$\bar{e}(A', B\sm B')\geq \bar{e}(B', A\sm A')$. 

As $\min\{\aA,\bB\} > \eta$, we can apply \Cref{lem:edges and max degree},
obtaining subgraphs $G_B$ of $G[B', B\sm B']$ and $G_A$ of $G[A', A\sm A']$ with
$e(G_B) \ge n-O(\sqrt{n})$, $\DD(G_B) \le \aA\sqrt{n}+1$,
$e(G_A) \ge (2-\aA\bB)n-O(\sqrt{n})$ and $\DD(G_A) \le \bB\sqrt{n}+1$.

By \Cref{lem:induce} and Chernoff whp $G'_A := G_A[A \cap S]$ 
has $e(G'_A) \ge (2-\alpha\beta-o(1))n/4$ and $\DD(G_A) \le (1+o(1))\bB\sqrt{n}/2$,
and $G'_B := G_B[B \cap S]$ 
has $e(G'_B) \ge (1-o(1))n/4$ and $\DD(G_B) \le (1+o(1))\aA\sqrt{n}/2$.

By \Cref{lineararboricity}, we decompose $G'_A$ into $(1+o(1))\bB \sqrt{n}/4$ linear forests,
so by averaging we find a linear forest with $(2/\bB - \aA - o(1)) \sqrt{n}$ edges. 
Similarly, in $G'_B$ we find a linear forest with $(1/\aA - o(1)) \sqrt{n}$ edges. 
 
Also, as $\min\{\aA,\bB\} > \eta$, by \Cref{lem: cover over 4} whp we have matchings of size
$(\aA/4-\dD)\sqrt{n}$ in $G[A \cap S]$ and $(\bB/4-\dD)\sqrt{n}$ in $G[B \cap S]$.
Thus whp $G[A \cap S]$ contains a linear forest with $(m_1-\dD) \sqrt{n}$ edges
and $G[B \cap S]$ contains a linear forest with $(m_2-\dD) \sqrt{n}$ edges,
where $m_1 = \max\{\alpha/4, 2/\bB-\alpha\}$
and $m_2 =\max\{\beta/4, 1/\alpha\}$, 

Finally, applying \Cref{lem:calculus} to $|A \cap S|, |B \cap S| \sim B(n,1/2)$,
replacing $(\dD,\lL)$ by $(2\dD,2\lL)$, with probability at least $1/2+2\lL$
we have $-(m_1-2\dD)\sqrt{n} \le |B \cap S|-|A \cap S|  \le (m_2-2\dD)\sqrt{n}$, so $E'$ holds.
\end{proof}

\subsection{The technical lemma}

We conclude this section by proving \Cref{lem:calculus},
thus completing the proof of the key lemma,
and so of the asymptotic result \Cref{thm:main}.

\begin{proof}[Proof of  \Cref{lem:calculus}]
Let $n^{-1} \ll  \dD \ll \lL \ll \eta \ll 1$ and $\alpha,\beta>\eta$.
Let $X,Y \sim B(n,1/2)$ be independent binomials
and $E$ be the event that $-(m_1-\dD)\sqrt{n} \le X-Y \le (m_2-\dD)\sqrt{n}$,
where $m_1 = \max\{\alpha/4, 2/\bB-\alpha\}$
and $m_2 =\max\{\beta/4, 1/\alpha\}$.
We will show $\mb{P}[E] \ge 1/2 + \lL$.
As $X+n-Y \sim B(2n,1/2)$ by \Cref{lem:bindiff},
 by \Cref{lem:CLT} for any $a<b$ we have
$\mb{P}(a\sqrt{n} \le X-Y \le b\sqrt{n}) = I[a,b] + o(1)$,
where $I[a,b] := \int_{a\sqrt{2}}^{b\sqrt{2}} \tfrac{1}{\sqrt{2\pi}} e^{-t^2/2} \ dt$. 
It suffices to show $I[-m_1,m_2] \ge 1/2 + 2\lL$,
as then by continuity $\mb{P}[E] \ge 1/2 + \lL$ for $\dD \ll \lL$. 

Denote $f(\aA) :=  I[-\tfrac{\alpha}{4}, \tfrac{1}{\alpha}]$.
As $\alpha/4 \le 2/\bB-\alpha$ if and only if $\bB \le \tfrac{8}{5\aA}$,
while $\beta/4 \le 1/\alpha$ if and only if $\bB \le \tfrac{4}{\aA}$, we have
\[
I[-m_1,m_2] =
\begin{cases} 
I\left[\alpha-\frac{2}{\beta}, \frac{1}{\alpha}\right] \ge f(\aA) & \text{if } 0<\beta \leq \frac{8}{5\alpha}, \\[10pt]
I\left[-\frac{\alpha }{4}, \frac{1}{\alpha}\right] = f(\aA) & \text{if } \frac{8}{5\alpha} < \beta < \frac{4}{\alpha}, \\[10pt]
I\left[-\frac{\alpha}{4}, \frac{\beta}{4}\right] \ge f(\aA) & \text{if } \beta \geq \frac{4}{\alpha}.
\end{cases}
\]
Thus it remains to show $f(\aA) \ge 1/2 + 2\lL$. 
\Cref{fig:example} shows a computational illustration of this inequality.

\begin{figure}[h!] 
    \centering
    \includegraphics[width=0.5\textwidth]{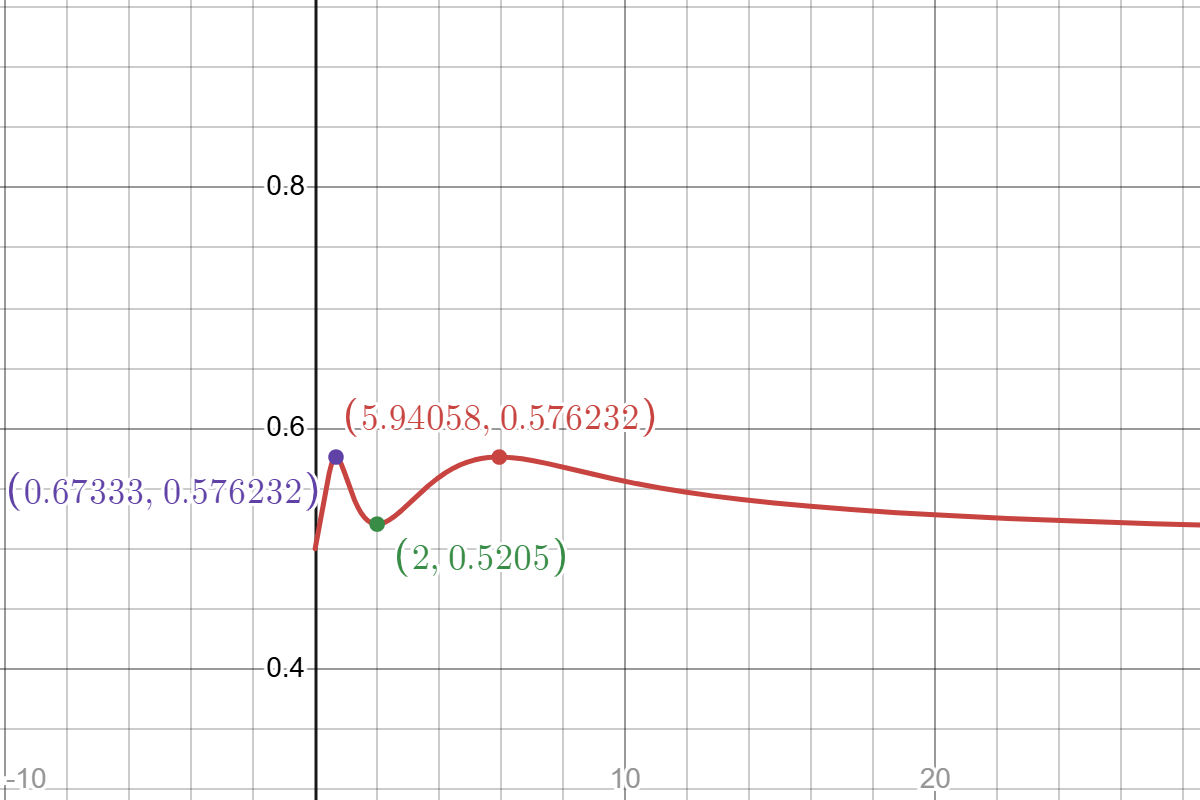} 
    \caption{An illustration of the function $f(\aA)$ and its local extrema.} 
    \label{fig:example} 
\end{figure}

The reader who is satisfied with this computational evidence can skip the remainder of the proof,
in which we verify the inequality using calculus. As $f$ is continuous and $\lL \ll \eta$, 
it suffices to show that $f(\aA)>1/2$ for all $\aA>0$.
As $\aA \to 0$ or $\aA \to \infty$ we have $f(\aA) \to I[0,\infty]=1/2$.
We will show that $f$ is at first increasing, until it reaches a local maximum,
then it is decreasing until it reaches a local minimum at $\aA=2$,
then it is increasing again until it reaches another local maximum,
after which it is again decreasing. One can verify $f(2) = I[-1/2,1/2] >1/2$
via tables of the normal distribution, so this will complete the proof.

We set  $x:=\alpha^2$ and calculate the derivative of $f$ via the Leibniz rule:
\[
f'(\aA)= \frac{1}{\sqrt{2\pi}} e^{-(\sqrt{2}\alpha/4)^2/2}\cdot \frac{\sqrt{2}}{4}
-\frac{1}{\sqrt{2\pi}} e^{-(\sqrt{2}/\alpha)^2/2}\cdot \frac{\sqrt{2}}{\alpha^2}
= \frac{1}{\sqrt{2\pi}} \Big( \tfrac14 e^{-x/16} - \tfrac1x e^{-1/x} \Big).
\]
Considering $x=4$, we observe that $f'(2)=0$, as illustrated by the green dot in \Cref{fig:example}. 

To analyse the sign of $f'(\aA)$, it will be more convenient to consider
\[ g(x):=-\tfrac{x}{16} + \tfrac{1}{x}  + \ln \tfrac{x}{4},
\qquad g'(x) = -\tfrac{1}{16} - \tfrac{1}{x^2}  + \tfrac{1}{x},
\]
noting that $g(x)$ has the same sign as $f'(\aA)$ and is zero iff $f'(\aA)$ is zero.

We also note that $g'(x) > 0$ iff  $-\frac{x^2}{16}+x-1>0$, 
where the quadratic roots are $8 \pm 4\sqrt{3}$,
so $g(x)$ is decreasing in $R_1 := (0,8-4\sqrt{3})$, 
increasing in $R_2 := (8-4\sqrt{3},8+4\sqrt{3})$
and decreasing in $R_3 := (8+4\sqrt{3},\infty)$.

Thus $g(x)$ has at most one root in each $R_i$,
one of which we have seen is $g(4)=0$ with $r_2 = 4 \in R_2$.

As $g'(4)>0$, if $g(x)$ had no root in $R_1$ then by the Intermediate Value Theorem it would be negative in $R_1$.
However, $g(x) \to \infty$ as $x \to 0$, so $g(x)$ must have a root $r_1 \in R_1$,
so $g(x)>0$ in $(0,r_1)$ and $g(x)<0$ in $(r_1,r_2)$.

Similarly, if $g$ had no root in $R_3$ it would be positive in $R_3$,
but $g(x) \to -\infty$ as $x \to \infty$, so $g$ must have a root $r_3 \in R_3$.
Then $g(x)>0$ in $(r_2,r_3)$ and $g(x)<0$ for $x>r_3$. 

This verifies the properties of $f(\aA)$ that we showed above suffice to complete the proof.
\end{proof}

\section{Exact result} \label{sec:exact}

In this section we prove our exact result, \Cref{thm:main+} in the following probabilistic form.
For any graph $G$, let $p(G)$ be the probability that the random induced subgraph $\Gh$ is Hamiltonian.
We need to show for any $(n+1)$-regular graph $G$ on $2n$ vertices with $n$ sufficiently large
that $p(G) \ge p_n := \min_{G' \in \mc{G}_n} p(G')$.

The proof requires a more careful implementation of the strategy for the asymptotic result in the previous section,
and also more accurate estimates for $p(G)$. Our first lemma estimates the second order term in $p_n$.

\begin{lemma} \label{lem:pn}
We have $p_n = \tfrac12 + \tfrac{3/2}{\sqrt{n\pi}} + O(n^{-3/2})$.
\end{lemma}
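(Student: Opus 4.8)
The plan is to analyze the structure of graphs $G' \in \mc{G}_n$ and pin down the probability $p(G') = \Pr(G'[\tfrac12] \text{ is Hamiltonian})$ precisely up to order $n^{-1/2}$, then take the minimum over $G' \in \mc{G}_n$. Fix $G' \in \mc{G}_n$ obtained from $K_{n-1,n+1}$ with parts $A$ ($|A| = n+1$, carrying a $2$-factor $F$) and $B$ ($|B| = n-1$). Let $S$ be a uniformly random vertex subset and set $a = |S \cap A|$, $b = |S \cap B|$, so $a \sim B(n+1,1/2)$ and $b \sim B(n-1,1/2)$ are independent. The induced subgraph $G'[S]$ is the complete bipartite graph between $S \cap A$ and $S \cap B$ together with the edges of $F$ inside $S \cap A$; since $B$ contains no edges, Hamiltonicity is governed entirely by how the part sizes compare and by how many disjoint edges of $F$ survive in $S \cap A$. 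The key combinatorial point (which should follow from the Hamiltonicity criteria developed earlier, e.g. a version of \Cref{lem:bipartitehamilton}, or a direct ad hoc argument for near-complete bipartite graphs) is: $G'[S]$ is Hamiltonian if and only if $a \le b$ and the bipartite graph is balanced enough (essentially $a \le b$, with the boundary case $a=b$ always fine since we just need $|A\text{-side}| = |B\text{-side}|$ in a complete bipartite graph that is then automatically Hamiltonian when both sides have size $\ge 2$), or $a > b$ and the induced $2$-factor $F[S \cap A]$ contains a linear forest with at least $a - b$ edges — because such a forest lets us "absorb" the excess $A$-vertices into paths while the complete bipartite structure handles the rest. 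For $a$ exceeding $b$ by a bounded multiple of $\sqrt n$, the number of surviving $F$-edges is concentrated around $|S\cap A|/4 \approx n/4$, which is $\gg \sqrt n$, so the linear-forest condition is satisfied with probability $1 - o(1)$ in that entire regime; only the extreme tail $a - b \gtrsim n/4$ fails, and that contributes $O(n^{-3/2})$ or smaller to $p(G')$ and can be absorbed into the error term.

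Given this, up to $O(n^{-3/2})$ we have $p(G') = \Pr(a \le b) + \Pr(a > b \text{ and } F[S\cap A] \text{ has a linear forest of size} \ge a-b)$. The first term: by \Cref{lem:bindiff}, $a + (n-1) - b \sim B(2n, 1/2)$, so $\Pr(a \le b) = \Pr(B(2n,1/2) \le n-1) = \tfrac12 - \Pr(B(2n,1/2) = n)/2 = \tfrac12 - \tfrac12\binom{2n}{n}2^{-2n}$, and by Stirling $\binom{2n}{n}2^{-2n} = \tfrac{1}{\sqrt{n\pi}} + O(n^{-3/2})$, giving $\Pr(a\le b) = \tfrac12 - \tfrac{1/2}{\sqrt{n\pi}} + O(n^{-3/2})$. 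For the second term, since the linear-forest condition holds with probability $1 - o(1)$ whenever $0 < a - b \le n^{0.6}$ say, and the failure region contributes $O(n^{-3/2})$, the second term equals $\Pr(a > b) - o(n^{-1/2})\cdot(\text{something}) + O(n^{-3/2})$; more carefully, $\Pr(a > b \text{ and forest exists}) = \Pr(a>b) - \Pr(a > b \text{ and forest fails})$, and $\Pr(a>b, \text{forest fails})$ requires $a - b$ to be in the tail, contributing $O(n^{-3/2})$. So the second term is $\Pr(a>b) + O(n^{-3/2}) = \tfrac12 + \tfrac{1/2}{\sqrt{n\pi}} + O(n^{-3/2})$. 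Adding: $p(G') = 1 - \Pr(a=b)\text{-type corrections}$; combining, $p(G') = \tfrac12 + \tfrac{1/2}{\sqrt{n\pi}} + \big(\tfrac12 - \tfrac{1/2}{\sqrt{n\pi}}\big) + \text{(correction for the } a=b \text{ boundary and forest term)}$. I need to reconcile this with the claimed $\tfrac12 + \tfrac{3/2}{\sqrt{n\pi}}$: the extra $\tfrac{1}{\sqrt{n\pi}}$ must come from the fact that when $a$ exceeds $b$ by a small amount the forest condition is essentially free, so effectively $p(G') \approx \Pr(a \le b) + \Pr(b < a) = 1$ minus the genuinely-failing tail, and the second-order term $\tfrac{3/2}{\sqrt{n\pi}}$ arises from a more delicate accounting of the boundary — I would set this up as $p(G') = \Pr(a - b \le c\sqrt n) + O(n^{-3/2})$ for the appropriate effective threshold and extract the $\tfrac32$ coefficient from $\Pr(a - b \le c\sqrt n)$ expanded via the local CLT, i.e. $\Pr(B(2n,1/2) \le n - 1 + (\text{shift})) = \tfrac12 + (\text{shift})\cdot\binom{2n}{n}2^{-2n}/(\text{normalization})$.

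The cleanest route, and the one I would actually carry out: show that for every $G' \in \mc G_n$, Hamiltonicity of $G'[S]$ depends only on $a$ and $b$ and on the isomorphism type of $F$ restricted to $S \cap A$, and that for all $a - b$ up to roughly $n/8$ (a safe bound below $n/4$) the condition holds with probability $1 - e^{-\Omega(\sqrt n)}$ regardless of which $2$-factor $F$ was chosen, so that \emph{to within $O(n^{-3/2})$ the value $p(G')$ does not depend on $F$ at all}; this already shows $p_n = p(G')$ for any fixed $G'$ up to the error term and reduces the problem to computing a single probability. Then I evaluate $p(G') = \Pr(\text{Hamiltonian}) = \Pr(a \le b \text{ or forest-condition}) = \Pr(a \le b + (n/4 - o(n)))$ minus a tail error... — I would instead just directly write $p(G') = \Pr(a \le b) + \Pr(a = b+1) + \dots$ term by term for the boundary indices where the forest condition could conceivably matter but in fact doesn't (it holds whp), concluding $p(G') = \Pr\big(a - (n-1) + b \le K_n\big) + O(n^{-3/2})$ where $K_n = \Theta(n)$, and extract the answer: $\Pr(B(2n,1/2) \le n - 1 + K_n) = 1 - \Pr(B(2n,1/2) \ge n + K_n + 1)$, and the deviation from $\tfrac12$ is $\sum_{j} \binom{2n}{n+j}2^{-2n}$ over an appropriate range, whose leading asymptotics give $\tfrac12 + \tfrac{3/2}{\sqrt{n\pi}} + O(n^{-3/2})$ once the $O(1/\sqrt n)$ corrections from $|A| = n+1 \ne n$, from the "$\le$ vs $<$" at the balance point, and from $\binom{2n}{n}2^{-2n} = \tfrac{1}{\sqrt{n\pi}}(1 + O(1/n))$ are all collected (each of the three sources contributes $\pm \tfrac{1/2}{\sqrt{n\pi}}$, summing to $\tfrac{3/2}{\sqrt{n\pi}}$).

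\medskip

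\textbf{Main obstacle.} The delicate part is \emph{not} the central-limit / Stirling bookkeeping (routine, once the setup is fixed) but rather pinning down the \emph{exact} Hamiltonicity criterion for $G'[S]$ near the balance point and verifying it is independent of the chosen $2$-factor $F$ up to error $O(n^{-3/2})$ — in particular handling the regime where $a - b$ is a small constant and the few edges of $F$ that survive must be arranged into a short linear forest, and confirming there is no parity obstruction or small-case failure that would shift the second-order term. I expect this to require a careful but elementary structural argument (a complete bipartite graph with a small number of chords on the larger side, of which one wants to absorb $a - b$ into the matching), together with a concentration estimate showing the surviving $F$-edges always suffice except in a tail contributing $O(n^{-3/2})$; once that is in hand, the asymptotic evaluation of $p_n$ follows by the local central limit theorem and Stirling's formula as sketched above.
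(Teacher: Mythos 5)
Your overall strategy matches the paper's — reduce to a binomial tail via \Cref{lem:bindiff} and Stirling, observing that for $G' \in \mc{G}_n$ the linear-forest condition is satisfied whp whenever the part-size imbalance is moderate — but you state the Hamiltonicity criterion for $G'[S]$ with the inequality reversed, and this error propagates into an arithmetic dead end. With $A$ the part of size $n+1$ carrying the $2$-factor and $B$ the independent part of size $n-1$, $G'[S]$ is Hamiltonian precisely when $a := |A\cap S| \ge b := |B\cap S|$ \emph{and} $G'[A\cap S]$ contains a linear forest with $a-b$ edges; the case $a < b$ is always non-Hamiltonian, because $B$ has no internal edges to absorb its excess vertices. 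Your first clause ``$a \le b$'' conflates the balanced case $a=b$ (which alone is fine, giving the zero linear forest) with $a<b$ (which is hopeless). As a consequence you end up adding $\mb{P}(a\le b) + \mb{P}(a>b) \approx 1$, notice it does not match the target, and never actually resolve the discrepancy — the final paragraphs speculate about where the missing $\tfrac{1}{\sqrt{n\pi}}$ might come from rather than correcting the criterion.

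The fix is small: replace $\mb{P}(a\le b)$ by $\mb{P}(a=b) = \tbinom{2n}{n-1}4^{-n} = \tfrac{1}{\sqrt{n\pi}} + O(n^{-3/2})$, after which your decomposition gives $\mb{P}(a=b) + \mb{P}(a>b) = \mb{P}(a\ge b) = \mb{P}\bigl(B(2n,\tfrac12)\ge n-1\bigr) = \tfrac12 + \tfrac32\tbinom{2n}{n}4^{-n} + O(n^{-3/2}) = \tfrac12 + \tfrac{3/2}{\sqrt{n\pi}} + O(n^{-3/2})$, as claimed. The paper's own proof sidesteps the trap entirely by writing the criterion directly as ``$a\ge b$ and linear forest of size $a-b$'', noting that the forest condition fails only with probability $e^{-\Theta(n)}$ (since whp $|a-b| < n/100$ while $G'[A\cap S]$ retains $\sim n/4$ of the $2$-factor edges), so that $p_n = \mb{P}(a\ge b) \pm e^{-\Theta(n)}$, and then computing $\mb{P}\bigl(B(2n,\tfrac12)\ge n-1\bigr)$ in one step by the symmetry of the binomial about $n$ together with Stirling. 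Your additional worry about dependence on the choice of $2$-factor $F$ is a non-issue: any $2$-factor has $n+1$ edges and maximum degree $2$, so the surviving subgraph whp has a linear forest far larger than $n/100$ regardless of $F$, and the resulting $p(G')$ values differ only by $e^{-\Theta(n)}$, which is swallowed by the error term.
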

\begin{proof} 
Fix $G \in \mc{G}_n$, with parts $(A,B)$, where $|A|=n+1$ and $|B|=n-1$.
Recall that $G$ is obtained from the complete bipartite graph $A \times B$
by adding some $2$-factor inside $A$.
Then $\Gh=G[S]$ is Hamiltonian if and only if $|A \cap S| \ge |B \cap S|$ 
and $G[A \cap S]$ contains a linear forest of size $|A \cap S| - |B \cap S|$.
By Chernoff bounds, with probability $1-e^{-\Theta(n)}$ we have $||A \cap S|-|B \cap S|| < n/100$ 
and $G[A \cap S]$ contains a linear forest of size $n/100$.

Thus $p_n =  \mb{P}(|A \cap S| \ge |B \cap S|) \pm e^{-\Theta(n)}$.
By \Cref{lem:bindiff} we have
$ \mb{P}(|A \cap S| \ge |B \cap S|) = \mb{P}(B(2n,1/2) \ge n-1)$.
As $\mb{P}(B(2n,1/2) \ge n+1) = \mb{P}(B(2n,1/2) \le n-1) = (1-\mb{P}(B(2n,1/2)=n))/2$,
we deduce the required estimate
$\mb{P}(B(2n,1/2) \ge n-1) = \tfrac12 + \tfrac{3/2}{\sqrt{n\pi}} \pm n^{-3/2}$,
using $4^{-n} \tbinom{2n}{n} = (n\pi)^{-1/2} \pm n^{-3/2}$ by Stirling's formula.\end{proof} 
The following lemma gives an estimate for binomial probabilities
that improves on the Chernoff bound for small or moderate deviations.
\begin{lemma} \label{lem:compare}
Let $n\in \mathbb N$ be large, and let $k,k_1,k_2\in \mathbb Z$ be such that $k\in[k_1,k_2]\subseteq[-n,n]$.
Write $f_n(t) := \mb{P}(B(2n,1/2) \ge n+t)$. Then
\[ \mb{P}(B(n+k,1/2) - B(n-k,1/2)) \in [k_1,k_2] ) 
= 1 - f_n(k-k_1+1) - f_n(k_2-k+1),\]
where $f_n(t) \le e^{-t^2/(3n+t)}$,
and for $|t|\leq \sqrt{n}/100$ we have $f_n(t) = 1/2 - \tfrac{t-1/2}{\sqrt{n\pi}} \pm (2t^3+1)/n^{3/2}$.\end{lemma}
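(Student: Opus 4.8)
\textbf{Proof proposal for \Cref{lem:compare}.}
The plan is to reduce everything to statements about the single binomial $B(2n,1/2)$ and then estimate its tail $f_n$. First I would establish the identity for the range probability. By \Cref{lem:bindiff}, writing $X \sim B(n+k,1/2)$ and $Y \sim B(n-k,1/2)$ independently, we have $X + (n-k) - Y \sim B(2n,1/2)$, so $X - Y = B(2n,1/2) - (n-k)$, i.e.\ $X-Y \in [k_1,k_2]$ exactly when $B(2n,1/2) \in [n-k+k_1, n-k+k_2]$. Hence
\[
\mb{P}(X-Y \in [k_1,k_2]) = \mb{P}\big(B(2n,1/2) \le n-k+k_2\big) - \mb{P}\big(B(2n,1/2) \le n-k+k_1 - 1\big).
\]
Now $\mb{P}(B(2n,1/2) \le n - k + k_2) = 1 - \mb{P}(B(2n,1/2) \ge n - k + k_2 + 1) = 1 - f_n(k_2 - k + 1)$, and similarly $\mb{P}(B(2n,1/2) \le n - k + k_1 - 1) = \mb{P}(B(2n,1/2) \ge n + k - k_1 + 1)$ by the symmetry of $B(2n,1/2)$ about $n$, which equals $f_n(k - k_1 + 1)$. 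Subtracting gives the claimed formula. This part is purely bookkeeping.

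Next I would prove the Chernoff-type bound $f_n(t) \le e^{-t^2/(3n+t)}$; this is just the Chernoff bound quoted earlier, applied to $X = B(2n,1/2)$ with $\mb{E}X = n$ and deviation $\delta = t/n$, giving $f_n(t) \le \exp(-\delta^2 n / (2+\delta)) = \exp(-t^2 / (2n + t))$; since $e^{-t^2/(2n+t)} \le e^{-t^2/(3n+t)}$ for $t \ge 0$ (indeed $3n+t \ge 2n+t$), we are done (and for $t \le 0$ the statement is vacuous as $f_n(t) \le 1$). A minor point is checking $t \le n$ so that $\delta \le 1$; this is covered by the hypothesis $k \in [-n,n]$, which bounds the relevant arguments of $f_n$. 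The main work is the third claim: the local expansion $f_n(t) = 1/2 - (t - 1/2)/\sqrt{n\pi} \pm (2t^3+1)/n^{3/2}$ for $|t| \le \sqrt{n}/100$.

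For the local expansion, the idea is to write $f_n(t)$ as a partial sum of the central binomial coefficients. By symmetry, $f_n(0) = \mb{P}(B(2n,1/2) \ge n) = 1/2 + \tfrac12 \cdot 4^{-n}\binom{2n}{n}$, and for $t \ge 1$,
\[
f_n(t) = f_n(0) - \sum_{j=0}^{t-1} 4^{-n}\binom{2n}{n+j} = \tfrac12 + \tfrac12\cdot 4^{-n}\binom{2n}{n} - \sum_{j=0}^{t-1} 4^{-n}\binom{2n}{n+j},
\]
with the analogous formula for $t \le 0$ obtained from $f_n(-t) = 1 - f_n(t+1)$. Using Stirling's formula, $4^{-n}\binom{2n}{n} = (n\pi)^{-1/2}(1 + O(1/n))$, and more generally $4^{-n}\binom{2n}{n+j} = (n\pi)^{-1/2} e^{-j^2/n}(1 + O((j^2+1)/n)) = (n\pi)^{-1/2}(1 + O((j^4+1)/n))$ in the range $|j| \le \sqrt{n}/100$ (expanding $e^{-j^2/n} = 1 + O(j^2/n)$ and absorbing). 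Substituting into the sum over $j$ from $0$ to $t-1$ gives a main term $(t - 1/2)(n\pi)^{-1/2}$ after combining with the $\tfrac12 \cdot 4^{-n}\binom{2n}{n}$ term, and an error bounded by $\sum_{j=0}^{t-1} O((j^4+1)/n) \cdot (n\pi)^{-1/2} = O(t^5/n^{3/2})$; one has to be slightly more careful than I have been to get the stated error $\pm(2t^3+1)/n^{3/2}$ rather than $t^5$, which means using $e^{-j^2/n} = 1 + O(j^2/n)$ directly (not expanding the exponential away) so the error in the sum is $O((n\pi)^{-1/2}\sum_{j<t} j^2/n) = O(t^3/n^{3/2})$, plus an $O(1/n^{3/2})$ from the Stirling correction on the leading coefficient. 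I expect this error accounting — tracking the constant and the exponent of $t$ carefully through Stirling's formula and the summation — to be the main (though routine) obstacle; the rest is assembly.
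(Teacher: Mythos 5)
Your proposal follows essentially the same route as the paper's proof: the identity reduces to $B(2n,1/2)$ via \Cref{lem:bindiff} and symmetry, the tail bound is Chernoff, and the local expansion is obtained by anchoring at a central tail probability and summing point probabilities $\mb{P}(B(2n,1/2)=n+j)$ estimated via the ratio $\binom{2n}{n+j}/\binom{2n}{n}$ — the paper anchors at $\mb{P}(B\ge n-1)$ (reusing \Cref{lem:pn}) and writes the ratio as $1\pm 2j^2/n$, which is exactly the "keep the error at $O(j^2/n)$" fix you identify, whereas you anchor at $f_n(0)$; this is purely cosmetic. One small inaccuracy to tidy: your aside that for $t\le 0$ the bound is "vacuous as $f_n(t)\le 1$" is not right, since $e^{-t^2/(3n+t)}<1$ for $t\ne 0$ — the Chernoff-type inequality should simply be read as a statement for $t\ge 0$ (which is how it is used), and note also that the paper's quoted two-sided Chernoff bound carries a factor of $2$ which your display silently drops, though the weaker exponent $3n+t$ in place of $2n+t$ comfortably absorbs it for the range of $t$ where the bound is actually invoked.
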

\begin{proof}
We have $\mb{P}(B(n+k,1/2) - B(n-k,1/2)) \in [k_1,k_2] ) 
 =  \mb{P}(B(2n,1/2) \in [n-k+k_1,n-k+k_2] )$ by  \Cref{lem:bindiff},
so the stated equality holds.
The first estimate on $f_n(t)$ follows from the Chernoff bound.
For the second estimate, we may assume $t\geq 0$ by symmetry. 
Note that $$\mb{P}[B(2n,1/2)=n+t]=\frac{\binom{2n}{n+t}}{\binom{2n}{n}}\mb P[B(2n,1/2)=n]= (1\pm 2t^2/n)(\tfrac{1}{\sqrt{n\pi}} \pm n^{-3/2}).$$
\pagebreak
Hence, using $\mb{P}(B(2n,1/2) \ge n-1) = \tfrac12 + \tfrac{3/2}{\sqrt{n\pi}} \pm n^{-3/2}$, we conclude
\[
f_n(t)=\mb P[B(2n,1/2)\geq n-1]-\sum_{i=n-1}^{n+t-1} \mb P[B(2n,1/2)=i]=1/2-\frac{t-1/2}{\sqrt{n\pi }}\pm (2t^3+1)/n^{3/2}.
\qedhere
\]
\end{proof}

We conclude by proving our main theorem.

\begin{proof}[Proof of \Cref{thm:main+}]
Let $n^{-1} \ll \eps \ll \gG \ll \dD \ll \eta \ll \tT \ll 1$
and $G$ be an $(n+1)$-regular graph on $2n$ vertices. 
Let $\Gh = G[S]$ be a random induced subgraph of $G$
and $p(G)$ be the probability that $G[S]$ is Hamiltonian.
Suppose for contradiction that $p(G) < p_n$.
We will show that there is a cut $(\wt{A},\wt{B})$ of $G$
with $|\wt{A}|=n+1$, $|\wt{B}|=n-1$ and $|G[\wt{B}]|=0$; call this an \emph{extremal} cut.
Then regularity will imply that $G[\wt{A},\wt{B}]$ is complete bipartite
and $G[\wt{A}]$ is a $2$-factor, i.e.~$G \in \mc{G}_n$,
contradicting the definition of $p_n$.

As in the proof of \Cref{thm:main-} and \Cref{thm:main},
it suffices to consider the near-bipartite case of \Cref{lem:casesdirac}:
there is some $A \sub V(G)$ with $n \le|A|\le(1+32\eps)n$
such that $G[A,\ov{A}]$ has at least $(1-56\eps)n^{2}$
edges and minimum degree at least $\gamma n$, where if  $|A|>n$ then
 $G[A]$ has maximum degree at most $\gamma n$. Set $B:=\ov{A}$.

 As before, whp $G[S]$ satisfies all conditions
 of \Cref{lem:bipartitehamilton}, with the possible exception of 
 the event $E$ that $(S \cap A, S \cap B)$ is a good cut of $G[S]$.
 Again we consider two cases according to the value of $k := |A|-n$. We start by ruling out the case $k > \dD\sqrt{n}$.
To see that this is impossible, note that $G[A]$ has a matching of size $\tfrac{k}{4\gG}$ by \Cref{rem:simple}, and that $p(G)$ is at least the probability that $G[A\cap S]$ has a matching of size $\tfrac{k}{20\gG}$ while at the same time $0\leq|A\cap S|-|B\cap S|\leq \tfrac{k}{20\gG}$. Thus we have $p(G) \ge \mb{P}(0 \le B(|A|,1/2) - B(|B|,1/2) \le \tfrac{k}{20\gG}) - e^{-\Theta(\sqrt{n})}$ by Chernoff.
Then by  \Cref{lem:compare} and since $f_n(t)$ is decreasing with $t$, we have
\begin{align*}
\mb{P}(0 \le B(|A|,1/2) - B(|B|,1/2) & \le \tfrac{k}{20\gG})
= 1-f_n(k+1)-f_n( \tfrac{k}{20\gG}-k+1) \\
& \ge 1/2 + \tfrac{\dD}{\sqrt{\pi}}  - O(\delta^3) - e^{-\Theta(\dD/\gG)^2}
\ge 1/2 + \dD/2,
\end{align*}
as $\gG \ll \dD$, so $p(G)>p_n$, contradiction.
Thus we can assume $k \le \dD\sqrt{n}$.
 
We will consider below a balanced cut $(A^*,B^*)$ obtained by moving $k$ vertices from $A$ to $B$
and minimum vertex covers $A',B'$ of $G[A^*],G[B^*]$. 
As before, we consider the event $E'$ that $(S \cap A^*, S \cap B^*)$ is a $k$-good cut of $G[S]$,
which implies $E$, and to conclude the proof it suffices to show $\mb{P}[E'] \ge p_n$.
Again we can assume  $\min\{\aA,\bB\} \leq \eta$, otherwise this holds by \Cref{lem:half+}.

We consider the two cases, depending on whether $|A|=|B|$ or not.

\emph{Case A:} $|A|>|B|$

As mentioned above, we consider any balanced cut $(A^*,B^*)$ obtained by moving $k\geq 1$ vertices from $A$ to $B$. Let $A',B'$ be minimum vertex covers of $G[A^*],G[B^*]$ of sizes $a = \aA\sqrt{n}, b = \bB\sqrt{n}$. 
By \Cref{lem:balancedcutbigmatching}, we have $(\aA\sqrt{n}+1)( \bB\sqrt{n}+1)\geq n+1$.

Let $M^*_A, M^*_B$ be maximal matchings in $G[A^*]$, $G[B^*]$.
Then $|M^*_A| \ge a/2$ and $|M^*_B| \ge \bcl{b/2}$.

Let $M_A, M_B$ be maximal matchings in $G[A]$, $G[B]$.
Then $|M_A| \ge |M^*_A| \ge a/2$ and $|M_B| \ge |M^*_B|-k \ge \bcl{b/2}-k$.
As noted above, we can assume $\min\{\aA,\bB\} \leq \eta$.
 
\emph{Case A1:} Suppose $\bB \le \eta$.

Then $a \ge \tfrac12 \eta^{-1} \sqrt{n}$.
By Chernoff bounds, with probability $1-e^{-\Theta(\sqrt{n})}$
there is a matching of size $a/9$ in $G[A \cap S]$.
Next we consider the choice of the random set $S' := V(M_B) \cap S \sub B \cap S$.
The expected number of surviving edges of $M_B$ is $\mb{E}[|M_B[S']|]=|M_B|/4$, so by averaging $\mb{P}(|M_B[S']| \ge b') \ge 1/8$,
where $b' := \max \{ \bcl{ (\bcl{b/2}-k)/8 }, 0 \}$.

Writing $B^\circ = B \sm V(M_B)$, to estimate $p(G)$ we consider two events 
depending on which of $|A\cap S|$ or $|B\cap S|$ is larger, giving
\begin{align*}\label{eq:prob}
 p(G) &\ge \mb{P}[ 0 \le |A \cap S|-|B \cap S|  \le a/9 ] - e^{-\Theta(\sqrt{n})}\\
&+ \tfrac18 \mb{E}_{S'} \mb{P}[ -b'  \le |A \cap S|-|S'|-|B^\circ \cap S| \le -1] ,
\end{align*}
where $|A \cap S| \sim B(n+k,1/2)$,  $|B \cap S| \sim B(n-k,1/2)$,
and $|B^\circ \cap S| \sim B(n-k-2|M_B|,1/2)$.

We estimate the first term in the bound on $p(G)$ by \Cref{lem:compare}
with parameters $(n,k,k_1,k_2) = (n,k,0,a/9)$, to get that it is larger than  $1 - (1/2 - \tfrac{k+1/2}{\sqrt{n\pi}} )-(2k^3+1)/n^{3/2} - e^{-\alpha^2/200}$.

For the second,
as $|B^\circ \cap S|-|A \cap S|+n+k \sim B(2n-2|M_B|,1/2)$,
writing  $\mu = n-|M_B|$, we have 
\begin{align*}
& \mb{P}(-b'  \le |A \cap S|-|S'|-|B^\circ \cap S| \le -1) \\
& = \mb{P}[n+k-|S'|+1 \le  B(2n-2|M_B|,1/2) \le n+k-|S'|+b']\\
& = f_\mu( k+|M_B|-|S'|+1) - f_\mu( k+|M_B|-|S'|+b'+1).
\end{align*}

Using $b',|S'| \le |M_B| \leq \eta\sqrt{n}$ where $\eta\ll 1$, and $b'\geq |M_B|/8$, 
and also $k\leq \delta \sqrt{n}$, so that $(k+|M_B|)^2/n = o((k+b')/\sqrt{n}$ for  $\eta$ sufficiently small,
by \Cref{lem:pn} we deduce
\begin{align} 
p(G)-p_n & = 1 - (1/2 - \tfrac{k+1/2}{\sqrt{n\pi}}  ) - e^{-\aA^2/200} 
+ 1_{b'>0} \tfrac18  \tfrac{b'}{\sqrt{n\pi}} 
- ( \tfrac12 + \tfrac{3/2}{\sqrt{n\pi}} ) -O\Big(\frac{(k+|M_B|)^3}{n^{3/2}}\Big) \notag\\
& \ge  \tfrac{k-1 +1_{b'>0} b'/8 }{\sqrt{n\pi}} - e^{-\aA^2/200} -o\Big(\frac{k+b'}{\sqrt{n}}\Big).
\end{align}
Recalling that $(\aA\sqrt{n}+1)( \bB\sqrt{n}+1)\geq n+1$, where $\bB \le \eta \ll 1$,
 and also that $b'\geq (\bcl{b/2}-k)/8\geq \sqrt{n}/(40\alpha)-k/8$,
we deduce $p(G)-p_n \ge \Omega(n^{-1/2})>0$ unless $k=1$ and $b'=0$. 

To conclude this subcase, it suffices to show that $b = 1$.
Indeed, as $|{A}|=n+1$ and $|{B}|=n-1$,  this would imply that $G[B^*]$ is a star, 
whose root must be the vertex $v$ moved from $A$ to $B$
when creating $(A^*,B^*)$, as $v$ has at least $\gamma n$ neighbours in $B$. Thus $G[B]$ is an independent set, so $({A},{B})$ is an extremal cut, which is a contradiction. 

As $b'=0$ we have $b\leq 2$, so it remains to rule out the possibility $b=2$, because $b\neq 0$ as there is at least one edge in $G[B^*]$.
Now, $b=2$ implies that $G[B^*]$ is either a union of a triangle and isolated vertices 
or has a matching of size $2$. The former case violates the regularity of $G$,
whereas in the latter case $\mb{P}(|M_B[S']|\geq 1) \ge 1/16$,
so a similar calculation to above (with $b'=1$ defined in the beginning of the proof, instead of $b'=0$) gives the contradiction $p(G)-p_n \ge \Omega(n^{-1/2})>0$.

\emph{Case A2:} Suppose $\aA \le \eta$.

Then $b \ge \tfrac12 \eta^{-1} \sqrt{n}$.
By Chernoff bounds, with probability $1-e^{-\Theta(\sqrt{n})}$
there is a matching of size $b/9$ in $G[B \cap S]$.
We obtain $(A^*,B^*)$ by moving $k$ vertices 
from $A$ to $B$ to maximise $e(A^*)$. Then by averaging 
$e(A^*) \ge (1-2k/n)e(A)\ge (1-2k/n)(k+1)(n+k)/2 \ge (k+1)n/2 - k^2$, as $e(A)$ has minimum degree at least $k+1$.
On the other hand, $e(A^*)$ has maximum degree at most $\gamma n$,
so its minimum vertex cover has size $a \ge e(A^*)/2\gamma n \ge (k+1)/5\gamma$.

The remainder of this case is similar to Case A1; we give the details for completeness.

We consider the choice of the random set $S' := V(M_A) \cap S \sub A \cap S$.
We have $\mb{E}[|M_A[S']|]=|M_A|/4 \ge a/8$,  
so $\mb{P}(|M_B[A]| \ge a') \ge 1/8$,
where $a'=\bcl{a/16} \ge (k+1)/90\gamma$.
We write $A^\circ = A \sm V(A_B)$ and estimate
\[ p(G) \ge \mb{P}[ 0 \le |B \cap S|-|A \cap S|  \le b/9 ] - e^{-\Theta(\sqrt{n})}
+ \tfrac18 \mb{E}_{S'} \mb{P}[ 1  \le |A^\circ \cap S|+|S'|-|B \cap S| \le a'] ,\]
where $|A \cap S| \sim B(n+k,1/2)$,  $|B \cap S| \sim B(n-k,1/2)$,
and $|A^\circ \cap S| \sim B(n+k-2|M_A|,1/2)$.

As $|A^\circ \cap S|-|B \cap S|-(n-k) \sim B(2n-2|M_A|,1/2)$,
writing  $\mu = n-|M_A|$, as in Case A1 we have 
\[\mb{P}(1  \le |A^\circ \cap S|+|S'|-|B \cap S| \le a') 
= f_\mu( k+|M_A|-|S'|+1) - f_\mu( k+|M_A|-|S'|+a'+1).\]
Since $k\leq a'\leq |M_A|\leq 100\eta \sqrt{n}$ where $\eta\ll 1$, as before we deduce
\begin{align*} 
p(G)-p_n & = 1 - (1/2 - \tfrac{k+1/2}{\sqrt{n\pi}}  ) - e^{-\bB^2/200} 
+  \tfrac18  \tfrac{a'}{\sqrt{n\pi}} 
- ( \tfrac12 + \tfrac{3/2}{\sqrt{n\pi}} ) - O\Big(\frac{|M_A|^3}{n^{3/2}}\Big) \\
& \ge  \tfrac{k-1 + a'/8 }{\sqrt{n\pi}} - e^{-\bB^2/200} +o\Big(\frac{a'}{\sqrt{n}}\Big).
\end{align*}
Recalling that $(\aA\sqrt{n}+1)( \bB\sqrt{n}+1)\geq n+1$,
we deduce $p(G)-p_n \ge \Omega(n^{-1/2}) > 0$ for any $k \ge 0$,
using $a' \ge (k+1)/90\gamma$ and $\gamma \ll 1$. 
Thus we obtain a contradiction in this case.

\emph{Case B:}  $|A|=|B|$

By symmetry we can suppose that $\beta\leq \eta$.
If $G[B]$ contains a vertex with degree at least $\gamma n$, then the proof is identical to Case A1 with $A:=A\cup\{v\}$ and $B:=B\setminus\{v\}$. Otherwise, since $G[B]$ spans at least $n/2$ edges because of regularity, it contains a matching of size $1/5\gamma$. Hence we can again follow Case A1 until \Cref{eq:prob}, where we will have $k=0$ and $b'\geq 1/50\gamma$, so clearly $p(G)-p_n>0$ also in this case.
 \end{proof}

\section{Concluding remarks} \label{sec:rem}

One question left unanswered by our paper is to determine which graph(s) $G \in \mc{G}_n$ achieve $p(G)=p_n$.
The proof of \Cref{lem:pn} shows that $|p(G)-p(G')| < e^{-\Theta(n)}$ for all $G,G' \in \mc{G}_n$,
so this seems to be a delicate question about large deviation rate functions.
A natural guess would be that $p(G)$ is minimised 
when the number of independent sets in $A$ is maximised, 
i.e.~the optimal $2$-factor should be a $C_4$-factor (see \cite{zhao2017extremal}).

Another natural direction would be to generalise the parameters of our problem, namely 
(a) the assumed degree of regularity in the graph $G$ and (b) the distribution on its induced subgraphs.
Concretely, one may consider a $d$-regular graph on $m$ vertices 
and ask about the probability $h(G,p)$ that $G[p]$ is Hamiltonian,
where $G[p]$ denotes a random induced subgraph 
with each vertex included independently with probability $p$.
If we stick to $(m,d)=(2n,n+1)$ and decrease $p$ then this puts the spotlight on 
a competing construction which we saw lurking in the background throughout the paper:
suppose (for convenience) that $n=k^2$ is a perfect square, fix a copy of $K_{n,n}$,
add $k$ vertex-disjoint $k$-vertex stars spanning each part, 
delete crossing edges between the star centres to make an $(n+1)$-regular graph.
When $G$ is this construction we saw that $h(G,1/2) \geq 0.52$ (see \Cref{fig:example}), 
so it is not a very serious competitor with $\mc{G}_n$ when $p=1/2$,
but for smaller $p$ there may be a phase transition
at which the competing construction takes over as the optimum.
Furthermore, $h(G,p)\to 0$ as $p\to 0$, 
as the largest linear forest in either part of $G[p]$ has $O(p\sqrt{n})$ edges, 
whereas $G[p]$ almost surely picks $\Omega(\sqrt{pn})$ more elements from one of the two parts of $G$.

On the other hand, if we stick to $p=1/2$ and decrease $d$, 
then to obtain a sensible question one should add an additional assumption,
such as $k$-connectivity for some $k$, otherwise $G$ may be disconnected,
in which case $h(G,1/2)$ decays exponentially in $m=|V(G)|$.
Alternatively, one could simply assume that $G$ is Hamiltonian,
as suggested to us by Alex Scott (personal communication).
We conjecture that the extremal examples $G$ for such questions 
are essentially disjoint unions of bipartite graphs,
with a few edges added to ensure the connectivity or Hamiltonicity assumption.
A more explicit and weaker form of this conjecture, which still seems interesting,
would be to show that if $d=cm$ for fixed $c \in (0,1/2)$ and $m$ large then 
$p(G) = \Omega(m^{-k/2})$ where $k = \lfloor (2c)^{-1}   \rfloor$.

One may also consider other classical combinatorial theorems
and ask for robust analogues in the sense of this paper.
For example, consider the Hajnal-Szemer\'edi Theorem \cite{HSz} (see also \cite{kierstead2008short})
that any $n$-vertex graph with $\delta(G)\geq (r-1)n/r$ (where $r$ divides $n$) contains a $K_r$-factor, 
i.e.~a partition of $V(G)$ into sets inducing copies of $K_r$ in $G$.
By analogy with \Cref{conj:EF}, we pose the following conjecture.

\begin{conjecture} \label{conj:HS}
For any $r \ge 2$ there is some $\eps>0$
so that if $G$ is an  $(\tfrac{(r-1)n}{r}+1)$-regular graph on $n$ vertices, where $r \mid n$,
then at least $\eps 2^n$ subsets of $V(G)$ induce a $K_r$-factor.
\end{conjecture}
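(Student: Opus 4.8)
The plan is to treat \Cref{conj:HS} as a robustness statement for the Hajnal--Szemer\'edi theorem \cite{HSz} in the same way that \Cref{conj:EF} was treated as one for Dirac's theorem, so the proof would mirror the structure of Sections \ref{sec:overview}--\ref{sec:exact}. First comes the reduction: the number of subsets $S\sub V(G)$ for which $G[S]$ contains a $K_r$-factor equals $2^{|V(G)|}\Pr(\Gh\text{ contains a }K_r\text{-factor})$, and since $r\mid|S|$ is necessary for such a factor while $\Pr(r\mid|S|)\to 1/r$, it suffices to show that, conditioned on $r\mid|S|$, the random induced subgraph $\Gh$ contains a $K_r$-factor with probability $\Omega_r(1)$. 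For this one wants a structural dichotomy in the spirit of \Cref{lem:casesdirac}: every $(\tfrac{(r-1)n}{r}+1)$-regular graph $G$ on $n$ vertices is either \emph{non-extremal}, or \emph{$\eta$-close to complete $r$-partite}, meaning there is a partition $V(G)=V_1\cup\dots\cup V_r$ with $\sum_i e(V_i)+\sum_{i<j}\bar{e}(V_i,V_j)\le\eta n^2$. Such a dichotomy should follow from the stability theory for the Hajnal--Szemer\'edi theorem (for complete graphs $K_r$ the essential extremal obstruction is the ``space barrier'', i.e.\ a nearly-$r$-partite graph with one part slightly too small), provable by the regularity method.

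In the non-extremal case one argues as for \Cref{lem:bidense,lem:near2c} that $\Gh$ contains a $K_r$-factor with high probability. Indeed, whp $\Gh$ has minimum degree at least $\tfrac{r-1}{r}|S|-O(\sqrt{n\log n})=(\tfrac{r-1}{r}-o(1))|S|$, and, by a concentration and inheritance argument like the one used for bidenseness in the proof of \Cref{lem:bidense}, $\Gh$ is still $\tfrac{\eta}{2}$-far from complete $r$-partite; a stability version of the Hajnal--Szemer\'edi theorem then produces a $K_r$-factor (the divisibility of the number of vertices of $\Gh$ being handled as usual, using $r\mid|S|$).

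The substantial case is when $G$ is $\eta$-close to complete $r$-partite with parts $V_1,\dots,V_r$ of sizes $n_i$; this is the direct analogue of the family $\mc{G}_n$, since regularity forces $\sum_i(n_i-n/r)^2\le 2\eta n^2$ and forces every vertex of $V_i$ to have at least $\dD_i:=n_i-n/r+1$ neighbours inside $V_i$. Write $m_i:=|V_i\cap S|$ and $e_i:=m_i-|S|/r$, so $\sum_i e_i=0$ when $r\mid|S|$. A flow/absorption argument then shows that, on a whp-true density event, $\Gh$ contains a $K_r$-factor whenever $\nu(G[V_i\cap S])\ge e_i^+$ for every $i$: route the excess of the over-full parts into the deficit of the under-full parts along a feasible flow on the complete digraph on $[r]$, realising each unit of flow out of $V_i$ by a copy of $K_r$ that uses an internal edge of $G[V_i\cap S]$ together with $r-2$ vertices of the dense cross-structure and no vertex of the target under-full part (so one needs $e_i^+$ disjoint internal edges in $V_i$, hence $\nu(G[V_i\cap S])\ge e_i^+$), and complete the remaining vertices into transversal copies of $K_r$ by a robust multipartite Hajnal--Szemer\'edi / blow-up argument inside the dense $r$-partite host $\Gh$. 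It then remains to show that, conditional on $r\mid|S|$, one has $\Pr(\nu(G[V_i\cap S])\ge e_i^+\text{ for all }i)=\Omega_r(1)$. The point, exactly as in \Cref{lem:edges and max degree}, is that $\nu(G[V_i\cap S])$ and the ``natural excess'' $\tfrac12(n_i-n/r)$ of part $i$ scale at the same $\Theta(\sqrt n)$ rate: a structure-aware lower bound on $\nu(G[V_i\cap S])$ (obtained, as in \Cref{lem:edges and max degree}, by decomposing $V_i$ into a small high-internal-degree set and a near-independent ``transversal-ready'' set) reduces the inequality $\nu(G[V_i\cap S])\ge e_i^+$ to the event that a single $\Theta(\sqrt n)$-scale deviation lands favourably, which holds with probability $\tfrac12\pm o(1)$, just as $\Pr(B(2n,1/2)\ge n-1)=\tfrac12+\Theta(n^{-1/2})$ in the proof of \Cref{lem:pn}. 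Since there are only $r$ parts and the $r$ events have limited mutual correlation (through $|S|$), a crude FKG-type or union bound yields $\Omega_r(1)$ — so, unlike in the exact result, one does not need the one-dimensional calculus of \Cref{lem:calculus}, only a bound bounded away from zero.

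I expect the main obstacle to lie in the near-extremal case, in two places. The first is the structure-aware estimate for $\nu(G[V_i\cap S])$: the generic bound of \Cref{lem: cover over 4} loses a factor of $4$, which is fatal here because the natural excess $\tfrac12(\dD_i-1)$ is comparable to the available capacity, so one must identify the extremal internal configuration (a $K_{\dD_i,\,n_i-\dD_i}$-type graph, which is forced when the internal matching number equals the internal minimum degree) and track $\nu(G[V_i\cap S])$ to within $o(\sqrt n)$ of the truth, in the style of the bookkeeping of \Cref{lem:edges and max degree} and the proof of \Cref{thm:main+}. The second is the dense-host step, which needs a genuinely robust multipartite $K_r$-factor lemma — a Hajnal--Szemer\'edi statement for near-complete $r$-partite graphs that survives passing to a random induced subgraph and tolerates prescribed imbalances between the parts; this is heavier than the linear-forest arguments of \Cref{sec:ham}, though it should be within reach of the absorption/blow-up method. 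Carrying out these two ingredients alongside the $r$-dimensional control of the part sizes is where the real work lies.
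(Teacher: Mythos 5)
\Cref{conj:HS} is not proved in this paper: it is posed in the concluding remarks as an open problem, by analogy with \Cref{conj:EF}, so there is no proof of record to compare against, and a complete argument along these lines would be new. Your overall plan is the natural transplant of the $r=2$ strategy --- reduce to $\mb{P}(\Gh \text{ has a } K_r\text{-factor})=\Omega_r(1)$, split by a stability dichotomy, handle the non-extremal case whp via a robust Hajnal--Szemer\'edi theorem, and in the near-extremal case run a flow/absorption argument keyed to the internal matchings $\nu(G[V_i\cap S])$ --- and the two technical ingredients you flag as missing (a structure-aware estimate on $\nu(G[V_i\cap S])$ sharp to $o(\sqrt n)$, and a robust multipartite $K_r$-factor lemma) are indeed the substantial ones.

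Two further points need repair. First, the dichotomy `non-extremal or $\eta$-close to complete $r$-partite' is asserted without a source; for $r\ge 3$ there is no off-the-shelf analogue of \Cref{lem:casesdirac}, and one must check whether any intermediate case (playing the role of the `almost two cliques' case at $r=2$) needs separate treatment in the regular setting. Second, and more seriously, the claim that a `crude FKG-type or union bound yields $\Omega_r(1)$' does not hold as stated. The union bound is vacuous once each event has probability near $\tfrac12$, and FKG does not apply because the event $\{\nu(G[V_i\cap S])\ge e_i^+\}$ is \emph{decreasing} in the coordinate indicating $v\in S$ when $v\in V_i$ but \emph{increasing} when $v\in V_j$ for $j\ne i$; the $r$ events thus have opposite monotonicities in the same coordinates and are not simultaneously increasing under any common ordering. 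What is actually required is an $(r-1)$-dimensional analogue of \Cref{lem:calculus}: a region of $\{(e_1,\dots,e_r):\sum_i e_i=0\}$ carrying probability $\Omega_r(1)$ under the (approximately Gaussian, negatively correlated) joint law of the part-size deviations, on which $e_i^+$ is dominated by the surviving matching number in $G[V_i\cap S]$ for every $i$. The paper's own heuristic $\eps\approx 1/r^2$ indicates that this favourable region is only a $\Theta(1/r)$-fraction of the ambient set rather than all of it, so a genuine multidimensional CLT computation replaces, and is strictly harder than, the single-variable analysis used for $r=2$. You should also note that your flow realisation of the special copies of $K_r$ touches $r-2$ other parts in addition to the double-and-skip pair, so feasibility is a simultaneous Gale--Hoffman condition rather than a source-by-source one; this is routine but worth stating.
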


A plausible class of extremal constructions for \Cref{conj:HS} may be the $r$-partite analogue of $\mc{G}_n$,
i.e.~ slightly unbalanced complete $r$-partite graphs with a suitable factor in the largest part $A$.
This suggests that the optimal $\eps$ should be $1/r^2$, where in the random induced subgraph $G[S]$
we have one probability factor of $1/r$ for $r \mid |S|$ and another for $A \cap S$ being the largest part.

\noindent \textbf{Acknowledgement.} We would like to thank Zach Hunter, Teng Liu and Aleksa Milojević for carefully reading the first version of our manuscript and helping us eliminate some minor errors.
\bibliographystyle{plain}

\providecommand{\MR}[1]{}

\end{document}